\newcommand\id{\operatorname{id}}
\newcommand\set[1]{\left\{ #1 \right\}}
\newcommand\abs[1]{\left| #1 \right|}
\newcommand\p[1]{\left( #1 \right)}
\newcommand\Z{\mathbb{Z}}
\newcommand\Q{\mathbb{Q}}
\newcommand\F{\mathbb{F}}
\newcommand\fp{\mathfrak{p}}
\newcommand\GSp{\operatorname{GSp}}
\newcommand\Sp{\operatorname{Sp}}
\newcommand\PGSp{\operatorname{PGSp}}
\newcommand\PSp{\operatorname{PSp}}
\newcommand\GL{\operatorname{GL}}
\newcommand\mult{\operatorname{mult}}
\newcommand\tr{\operatorname{tr}}
\newcommand\Inn{\operatorname{Inn}}
\newcommand\Rad{\operatorname{Rad}}
\newcommand\Gal{\operatorname{Gal}}
\newcommand\Aut{\operatorname{Aut}}
\newcommand\Frob{\operatorname{Frob}}
\newcommand\End{\operatorname{End}}
\newcommand\rad{\operatorname{rad}}
\newcommand\pr{\operatorname{pr}}
\newcommand\im{\operatorname{im}}
\newcommand\disc[1]{d_{#1}}
\newcommand\Jac{\operatorname{Jac}}
\theoremstyle{plain}
\newtheorem{theorem}{Theorem}[section]
\newtheorem{lemma}[theorem]{Lemma}
\newtheorem{proposition}[theorem]{Proposition}
\newtheorem{corollary}[theorem]{Corollary}
\newtheorem{algorithm}[theorem]{Algorithm}
\theoremstyle{definition}
\newtheorem{example}[theorem]{Example}
\theoremstyle{remark}
\newtheorem{remark}[theorem]{Remark}
\numberwithin{equation}{section}
\title[An effective open image theorem for products of PPAVs]{An effective open image theorem for products of principally polarized abelian varieties}
\author[Mayle]{Jacob Mayle}
\address{Jacob Mayle, Department of Mathematics, Wake Forest University, Winston-Salem, NC, USA}
\email{maylej@wfu.edu}
\author[Wang]{Tian Wang}
\address{Tian Wang, Department of Mathematics \& Statistics,
Concordia University,
Montreal, Quebec, Canada}
\email{tian.wang@concordia.ca}
\date{\today}
\subjclass[2010]{Primary 11F80; Secondary 11G05, 11G10}
\begin{document}

\begin{abstract} 
Let $A = \prod_{1\leq i\leq n} A_i$ be the product of principally polarized abelian varieties $A_1, \ldots, A_n$ of dimensions $g_1, \ldots, g_n$, respectively, each defined over a number field $K$, and pairwise nonisogenous over $\overline{K}$.  We make effective an open image theorem for $A$ due to Hindry and Ratazzi. More specifically,  we give an explicit bound of the constant $c(A)$ under GRH, in terms of standard invariants of $K$ and each $A_i$, where $c(A)$ is defined to be the smallest positive integer such that for any prime $\ell>c(A)$, the image of the $\ell$-adic Galois representation of $A$ is ``as large as possible'' in a suitable sense.
\end{abstract}

\maketitle

\section{Introduction} \label{S:intro}

Let $K$ be a number field. For $n\geq 1$, let $A$ be the product of principally polarized abelian varieties $A_1/K, \ldots, A_n/K$ of dimensions $g_1, \ldots, g_n$, respectively. For a prime $\ell$, let 
$T_\ell(A) \coloneqq \varprojlim A[\ell^k]$
be the $\ell$-adic Tate module of $A$, where $A[\ell^k]$ denotes the $\ell^k$-torsion subgroup of $A(\overline{K})$.  Similarly, let $T_\ell(A_i)$ be the $\ell$-adic Tate module of $A_i$. We have that
\[
T_\ell(A) \cong T_\ell(A_1) \oplus \cdots \oplus T_\ell(A_n) \cong \bigoplus_{i=1}^n \Z_\ell^{2g_i}
 \]
 as $\Z_\ell$-modules. We choose a $\mathbb{Z}_\ell$-basis for each $T_\ell(A_i)$ and concatenate these together to form a $\Z_\ell$-basis for  $T_\ell(A)$. With respect to this basis, the natural action of $\Gal(\overline{K}/K)$ on $T_\ell(A)$ gives rise  to the $\ell$-adic Galois representation of $A$,
\[ 
\rho_{A,\ell}=\rho_{A_1\times\ldots\times A_n,\ell} \colon \Gal(\overline{K}/K) \longrightarrow \prod_{i=1}^n \GSp_{2g_i}(\Z_\ell),\]
where for any $g\geq 1$, $\GSp_{2g} \subseteq \GL_{2g}$ denotes the general symplectic group.

We begin by discussing the case of a single abelian variety (i.e., $n=1$). Here, we write $g$ to denote the dimension $g_1$. To start,  recall that Serre's open image theorem for abelian varieties \cite{MR3185222Letter,MR1484415,MR387283} gives that if the geometric endomorphism ring of $A$ is trivial (i.e., $\End(A_{\overline{K}}) = \Z$) and $g$ is $2,6,$ or odd, then $\rho_{A,\ell}$ is surjective for each sufficiently large prime $\ell$. This result has been generalized to a density 1 set of positive integers $g$ due to the work of Pink \cite[Theorem 5.14]{MR1603865} and Wintenberger \cite[Th\'eor\`eme 2]{MR1944805}.  In order to quantify these open image  results, 
we study the constant $c(A)$  defined as the least positive integer such that
\[
\ell > c(A) \quad \implies \quad \rho_{A,\ell} \text{ is surjective},
\]
provided such an integer exists and $c(A) = \infty$ otherwise.
In the case of elliptic curves (i.e., $g= 1$), a well-known conjecture \cite{MR3482279,zywina2015possible} (originally a question of Serre)  asserts that $c(A)\leq37$ for each elliptic curve $A/\Q$ without complex multiplication. While this conjecture remains open, significant partial progress has been made by studying rational points on modular curves \cite{MR482230,MR3137477, MR644559, MR3961086, MR2753610, MR4261100, MR4589060, furio2023serresuniformityquestionproper}. Further, there is a rich history of bounds on $c(A)$ in terms of invariants of the $A$ \cite{MR3161774,MR2118760,MR1360773,MR1209248,MR3498908,MR1865384,MR644559}. Following a comment  of Serre \cite[Note 632.6]{MR3185222}, the authors \cite{mayle2021effective} recently obtained the explicit bound
\begin{equation}\label{ec-bound} 
c(A) \leq 964 \log \rad(2N_{A}) + 5760
\end{equation}
under the Generalized Riemann Hypothesis for Dedekind zeta functions (GRH), where $N_A$ is the conductor of $A$ and $\rad n \coloneqq  \prod_{p \mid n} p $ is the radical of an integer $n$.
Expanding beyond the realm of elliptic curves (with the same assumption that $n=1$), an increasing interest has arisen in exploring the constant  $c(A)$ for abelian varieties of higher dimensions (i.e., $g\geq 2$)   \cite{MR3510393, NonSurjAlg, MR1969642, Lom16, ZywinaEffectiveAV}.

From now on, we assume $n \geq 2$ and the dimensions $g_1,\ldots,g_n$ are arbitrary. Note that in this case, the geometric endomorphism ring of $A$ is nontrivial, which imposes a restriction on the image of $\rho_{A,\ell}$. Specifically, the Weil pairing on each $A_i$ implies that the image of $\rho_{A,\ell}$ must be contained in the group 
\[ \Delta_{g_1,\ldots, g_n}(\Z_\ell) \coloneqq  \set{(\gamma_1, \ldots, \gamma_n) \in \prod_{i=1}^n \GSp_{2g_i}(\Z_\ell)  : \mult(\gamma_1) = \cdots = \mult(\gamma_n)}\!, \]
where $\mult\colon\GSp_{2g_i}(\Z_\ell) \twoheadrightarrow \Z_\ell^\times$ denotes the multiplier map. In the case that $g\coloneqq g_1=\ldots =g_n$, we write  $\Delta_{2g, n}(\Z_\ell)$ as shorthand for $\Delta_{2g, n}(\Z_\ell) \coloneqq \Delta_{g, \ldots, g}(\Z_\ell)$ (see Section \ref{sub-2.2}).
In view of the natural restriction on the image of $\rho_{A,\ell}$ just described, we shall always consider $\rho_{A,\ell}$ as a map
\[
\rho_{A,\ell} \colon \Gal(\overline{K}/K) \longrightarrow \Delta_{g_1, \ldots, g_n}(\Z_\ell).
\]
Consequently, when we say that $\rho_{A,\ell}$ is surjective, we mean that its image is $\Delta_{g_1, \ldots, g_n}(\Z_\ell)$. With this in mind, a result of Hindry and Ratazzi \cite[Th\'eor\`eme 1.4]{MR2862374} implies an open image theorem for $A$. They showed (in particular) that if each $A_1, \ldots, A_n$ has a trivial geometric endomorphism ring, are pairwise  non $\overline{K}$-isogenous, and   $\rho_{A_i, \ell}$ is surjective for all sufficiently large primes $\ell$ and each $1 \leq i \leq n$, then $\rho_{A,\ell}$ is surjective for all 
 sufficiently large prime numbers $\ell$. Their work generalizes prior results of Serre \cite[Th\'eor\`eme 6, p. 324]{MR387283} and Ribet \cite[Theorem 3.5]{MR419358} that give open image theorems for  products of non $\overline{K}$-isogenous elliptic curves without complex multiplication. 
 
 This paper aims to quantify the aforementioned result of Hindry and Ratazzi. Namely, we define the constant $c(A_1 \times \cdots \times A_n)$ to be the least positive integer such that
\begin{equation}\label{def-c(.)}
\ell > c(A_1\times \cdots \times A_n) 
\quad \implies \quad
\rho_{A,\ell} \text{ is surjective},\end{equation}
provided such an integer exists and $ c(A_1\times \cdots \times A_n) = \infty$ otherwise. In the case of a product of elliptic curves, this constant has already been considered. Indeed, suppose that $A_1, \ldots, A_n$ are pairwise non $\overline{K}$-isogenous elliptic curves without complex multiplication. Masser and W\"{u}stholz \cite[Proposition 1, p. 251] {MR1209248} gave a bound on $c(A_1\times \cdots \times A_n)$ in terms of the degree of $K$ and the Faltings height $h(A_i)$ of each $A_i$. Building on their work and the techniques in \cite{MR3437765, MR1209248}, Lombardo \cite[Lemma 7.1]{MR3515826} obtained an effective bound on $c(A_1 \times \cdots \times A_n)$ in terms of the same invariants. 

We proceed to present our contribution, which is an explicit bound on the constant $c(A_1 \times \cdots \times A_n)$. We distinguish between the case where each $A_i$ is of a common dimension $g$ and the case of differing dimensions. 
The former case is  more intricate and is the subject of our main theorem.
The latter case emerges later in the paper (Theorem \ref{main-thrm-2}) as a consequence of a group-theoretic lemma. To succinctly express the bound appearing in our main theorem, we 
set some notation. Fix integers $n \geq 2$ and $g \geq 1$. Let $\ell_g$ be the least prime not dividing $2g$. For each $1\leq i\neq j \leq n$, we define
\begin{align}
   c(K, g) & \coloneqq 2(\ell_g^{8g^2}-1)^2\left(\log d_K + [K:\Q] \log \left(2(\ell_g^{8g^2}-1)^2\right) \right),  \label{c-K-g} \\
   c(K, A_i, A_j) & \coloneqq 2(\ell_g^{8g^2}-1)^2 [K:\Q]\log \left(\rad (2\ell_g N_{A_i}N_{A_j}d_K)\right). \label{c-K-A}
\end{align}

A final missing piece that appears in the statement of our main theorem is the notion of an adelic Galois representation. We defer the definition to Section \ref{S:GalRepProd} in order to state the main theorem now.

\begin{theorem} \label{main-thrm-1} Assume GRH. Let $K$ be a number field. Let $A_1/K, \ldots, A_n/K$ be principally polarized abelian varieties of a common dimension $g$. Let $\ell_g$ be the least prime not dividing $2g$. Assume that the adelic Galois image $G_{A_i}$ of $A_i$ is open in $\GSp_{2g}(\widehat{\Z})$ for each $1 \leq i \leq n$. Then the following statements are equivalent:
\begin{enumerate}
    \item For all $1\leq i \neq j \leq n$,  $A_i$ and $A_j$ are non  $\overline{K}$-isogenous.  \label{main-thm-isogenous}
    \item The adelic Galois image $G_{A_1 \times \cdots \times  A_n}$ of  $A$  is an open subgroup of $\Delta_{2g, n}(\widehat{\Z})$.\label{main-thm-adelic-image}
    \item The constant
    $c(A_1\times \cdots \times A_n)$ exists and is bounded above by
    \[
    \max_{1\leq i\neq j \leq n}\{4g \left(\tilde{a} \left(c(K, g)+  c(K, A_i, A_j)\right) + 2\tilde{b} (\ell_g^{8g^2}-1)^2   + \tilde{c}\right),  c(A_i)\}
    \]
    where $\tilde{a}$, $\tilde{b}$, and $\tilde{c}$ are the explicit constants in Corollary \ref{ECDT-cor}.
   \label{main-thm-ell-adic-image}    
\end{enumerate}
\end{theorem}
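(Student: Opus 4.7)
The plan is to establish the cyclic chain (3) $\Rightarrow$ (2) $\Rightarrow$ (1) $\Rightarrow$ (3), with the substance in the last implication. For (3) $\Rightarrow$ (2): the hypothesis gives $G_{A_1\times\cdots\times A_n,\ell} = \Delta_{2g,n}(\Z_\ell)$ for all $\ell$ above an explicit constant, and since each $G_{A_i}$ is open in $\GSp_{2g}(\widehat{\Z})$ the remaining finitely many $\ell$-adic images are open in $\Delta_{2g,n}(\Z_\ell)$; an iterated Goursat argument combines these into openness of the adelic image in $\Delta_{2g,n}(\widehat{\Z})$. For (2) $\Rightarrow$ (1) I argue contrapositively: if some pair $A_i, A_j$ is $\overline{K}$-isogenous, Faltings's isogeny theorem yields a Galois-equivariant isomorphism of the Tate modules over a finite extension of $K$, so the $(i,j)$-projection of $G_{A_1\times\cdots\times A_n}$ lies in the graph of a group isomorphism $\GSp_{2g}(\widehat{\Z}) \to \GSp_{2g}(\widehat{\Z})$, which has infinite index in $\Delta_{2g,2}(\widehat{\Z})$.

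For (1) $\Rightarrow$ (3), the first move is to reduce from $n$-fold to pairwise surjectivity. Fix $\ell$ greater than the bound asserted in (3); then $\ell > c(A_i)$ for each $i$, so $G_{A_i,\ell} = \GSp_{2g}(\Z_\ell)$. An iterated Goursat lemma applied to $\Delta_{2g,n}(\F_\ell)$ — using that $\PSp_{2g}(\F_\ell)$ is simple and has no nontrivial coincidences among its quotients when $\ell \geq \ell_0$ — reduces mod-$\ell$ surjectivity onto $\Delta_{2g,n}(\F_\ell)$ to the pairwise statements $\bar{\rho}_{A_i\times A_j,\ell}\twoheadrightarrow \Delta_{2g,2}(\F_\ell)$ for all $i \neq j$. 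A standard mod-$\ell$-to-$\Z_\ell$ lifting argument (valid for $\ell \geq \ell_0$, where closed subgroups of $\GSp_{2g}(\Z_\ell)$ that surject mod $\ell$ must be the full group) then upgrades this into $\ell$-adic surjectivity onto $\Delta_{2g,n}(\Z_\ell)$.

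The heart of the proof is the explicit pairwise bound. Suppose, for contradiction, that $\bar{\rho}_{A_i\times A_j,\ell}$ fails to surject onto $\Delta_{2g,2}(\F_\ell)$ for some $\ell$ exceeding the stated bound. By Goursat's lemma this produces a nontrivial common quotient of the images of $\bar{\rho}_{A_i,\ell}$ and $\bar{\rho}_{A_j,\ell}$; for $\ell \geq \ell_0$ such a quotient can only arise from a Galois-equivariant $\F_\ell$-linear identification between suitable subquotients of $A_i[\ell]$ and $A_j[\ell]$. To detect this identification I plan to apply Corollary \ref{ECDT-cor} to the Galois extension of $K$ cut out by $\bar{\rho}_{A_i,\ell_0}\times\bar{\rho}_{A_j,\ell_0}$, whose discriminant is controlled by $d_K$, $[K:\Q]$, and $\rad(2\ell_0 N_{A_i} N_{A_j} d_K)$ via standard conductor-discriminant estimates. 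Under GRH this yields an unramified prime $\fp$ of $K$ of norm bounded by the bracketed expression in (3), whose Frobenius lands in a prescribed conjugacy class of size at most $8g(\ell_0^{8g^2}-1)^2$ inside $\Delta_{2g,2}(\F_{\ell_0})$ chosen so that the traces $\tr\bar{\rho}_{A_i,\ell}(\Frob_\fp)$ and $\tr\bar{\rho}_{A_j,\ell}(\Frob_\fp)$ disagree. The Weil bound then forces the integer trace difference to have absolute value at most $4g\sqrt{N\fp}$, so once $\ell$ exceeds the stated quantity it cannot vanish modulo $\ell$, contradicting the congruence imposed by the common quotient.

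The hard part will be the Goursat case analysis for pairs: classifying, uniformly in $\ell \geq \ell_0$, the proper subgroups of $\Delta_{2g,2}(\F_\ell)$ that project onto $\GSp_{2g}(\F_\ell)$ in each coordinate, and producing, for each such obstruction, an explicit conjugacy class in $\Delta_{2g,2}(\F_{\ell_0})$ of combinatorial size bounded by $8g(\ell_0^{8g^2}-1)^2$ whose elements are detected by the trace comparison above. Once this classification and the independence of the two $\ell_0$-level extensions are in place, inserting the conductor-discriminant estimate into Corollary \ref{ECDT-cor} with the explicit constants $\tilde a$ and $\tilde b$ yields the closed-form bound asserted in (3).
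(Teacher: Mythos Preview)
Your overall architecture is close to the paper's, but the core step in (1) $\Rightarrow$ (3) has a genuine gap. You propose to apply Corollary \ref{ECDT-cor} to the extension cut out by the \emph{mod-$\ell_0$} representation $\bar\rho_{A_i,\ell_0}\times\bar\rho_{A_j,\ell_0}$ and to find there a conjugacy class whose Frobenius elements have distinct traces. This cannot work in general: $A_i$ and $A_j$ may be congruent modulo $\ell_0$ (indeed modulo any fixed prime power), so that $a_\fp(A_i)\equiv \pm a_\fp(A_j)\pmod{\ell_0}$ for \emph{every} $\fp$, even though they are not $\overline K$-isogenous. The mod-$\ell_0$ image then contains no conjugacy class with the property you want. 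The paper's Example \ref{E:ECs} exhibits exactly this phenomenon at $\ell=13$. What is needed instead is an $\ell_0$-\emph{adic} argument (Proposition \ref{lemma-Serre-isogeny} in the paper): one works in the $\Z_{\ell_0}$-algebra generated by the image of $\rho_{A_i,\ell_0}\times\rho_{A_j,\ell_0}$, identifies the exact power $\ell_0^{m_\pm}$ at which the trace congruence first breaks, and passes to a finite quotient of order at most $(\ell_0^{8g^2}-1)^2$ built from successive mod-$\ell_0$ reductions of that algebra. This is the source of the factor $(\ell_0^{8g^2}-1)^2$ in the bound; it is a degree bound for the Chebotarev field, not the size of a conjugacy class in $\Delta_{2g,2}(\F_{\ell_0})$, and the factor $8g$ comes afterward from the Weil bound $4g\sqrt{N(\fp)}$.

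A secondary issue: in your (3) $\Rightarrow$ (2), the claim that openness of each $G_{A_i,\ell}$ forces openness of $G_{A_1\times\cdots\times A_n,\ell}$ for the finitely many remaining $\ell$ is not automatic; two open subgroups of $\GSp_{2g}(\Z_\ell)$ can have a fiber product of infinite index in $\Delta_{2g}(\Z_\ell)$. The paper handles this via a Lie-algebra argument (Proposition \ref{all-ell-adic-open}), which uses the non-isogeny hypothesis and hence requires first establishing (1) from (3), as the paper does by proving (1) $\Leftrightarrow$ (3) before attacking (3) $\Rightarrow$ (2).
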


Specializing to the case of a product of elliptic curves over $\Q$, we obtain the following corollary.

\begin{corollary}\label{main-cor}
Assume GRH. Let $A_1/\Q, \ldots, A_n/\Q$  be elliptic curves without complex multiplication. Then the following  statements are equivalent:
\begin{enumerate}
    \item \label{isogeny-cor} For all $1\leq i \neq j \leq n$,  $A_i$ and $A_j$ are non  $\overline{\Q}$-isogenous.
    \item \label{adelic-image-cor}  The adelic Galois image $G_{A_1\times\cdots \times A_n}$ of $A$  is an open subgroup of  $\Delta_{2,n}(\widehat{\Z})$.
    \item \label{ell-adic-cor}  The constant  $c(A_1\times \cdots \times A_n)$ exists and satisfies the bound 
    \[
    c(A_1\times\cdots \times A_n) \leq \max_{1 \leq i \neq j \leq n} \left\{1377075200 \log \rad(6 N_{A_i} N_{A_j}) + 26020715799\right\}.
    \] 
\end{enumerate}
\end{corollary}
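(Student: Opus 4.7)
The plan is to obtain Corollary \ref{main-cor} as a direct specialization of Theorem \ref{main-thrm-1} combined with Theorem \ref{MW20}. Since each $E_i/\Q$ is a non-CM elliptic curve, Serre's classical open image theorem implies that $G_{E_i}$ is open in $\GL_2(\widehat{\Z}) = \GSp_2(\widehat{\Z})$, so the hypothesis of Theorem \ref{main-thrm-1} is satisfied with $g = 1$ and $K = \Q$. The equivalences \eqref{isogeny-cor}$\Leftrightarrow$\eqref{adelic-image-cor}$\Leftrightarrow$\eqref{ell-adic-cor} (with the bound from Theorem \ref{main-thrm-1}\eqref{main-thm-ell-adic-image}) then follow immediately, and the remaining work is to massage that bound into the stated form.

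For the numerical simplification, with $g = 1$ and $K = \Q$ we have $\log d_K = 0$, $[K:\Q] = 1$, and $\ell_0 = 3$ is the least prime not dividing $2g = 2$, giving $\ell_0^{8g^2} - 1 = 3^8 - 1$. Substituting these into Theorem \ref{main-thrm-1}\eqref{main-thm-ell-adic-image} yields
\[
\max_{1 \leq i \neq j \leq n} \Bigl\{ 8(3^8-1)^2 \bigl( \tilde{a}\log \rad(6 N_{E_i} N_{E_j}) + \tilde{a}\log(2(3^8-1)^2) + \tilde{b} + 1\bigr),\ c(E_i) \Bigr\}.
\]
Inserting the explicit values of $\tilde{a}$ and $\tilde{b}$ from Corollary \ref{ECDT-cor}, I would verify by direct arithmetic that $\tilde{a} = 4$ and $\tilde{a}\log(2(3^8-1)^2) + \tilde{b} + 1 \leq 76.59$, recovering the first argument of the maximum in the statement.

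To eliminate the $c(E_i)$ term from the maximum, I invoke Theorem \ref{MW20}, which gives $c(E_i) \leq 1279.626 \log \rad(2N_{E_i}) + 8007.988$. Since the leading coefficient $8(3^8-1)^2 \cdot 4 \approx 1.38 \times 10^9$ dominates $1279.626$, the constant $8(3^8-1)^2 \cdot 76.59$ dominates $8007.988$, and $\log \rad(2 N_{E_i}) \leq \log \rad(6 N_{E_i} N_{E_j})$, the first argument of the maximum unconditionally exceeds $c(E_i)$, so the $c(E_i)$ term can simply be dropped. I expect the only nonroutine step to be the arithmetic check that $\tilde{a}\log(2(3^8-1)^2) + \tilde{b} + 1 \leq 76.59$; the rest reduces to tracking constants and invoking known results.
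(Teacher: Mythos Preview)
Your proposal is correct and follows essentially the same approach as the paper: invoke Serre's open image theorem to verify the hypothesis of Theorem \ref{main-thrm-1}, specialize with $g=1$, $K=\Q$, $\ell_0=3$, $\tilde{a}=4$, $\tilde{b}=2.5$, and then use Theorem \ref{MW20} to absorb the $c(E_i)$ term into the other argument of the maximum. The arithmetic check $4\log(2(3^8-1)^2)+2.5+1 \approx 76.58 \leq 76.59$ is indeed the only numerical verification needed.
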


We now make some remarks on Theorem \ref{main-thrm-1} and Corollary \ref{main-cor}.

\begin{remark}\label{remark-1} 
The only parts of Theorem \ref{main-thrm-1}  and Corollary \ref{main-cor} that assume GRH are the bounds appearing in the third part of each. It is worth noting that weaker unconditional bounds can be given by using an unconditional effective Chebotarev density theorem (see  \cite[Theorem 1]{MR4472459}). 
\end{remark}

\begin{remark}
Under GRH, we can  invoke \cite[Theorem 1.4]{ZywinaEffectiveAV} to obtain an effective bound of $c(A_i)$ in part (3) of Theorem \ref{main-thrm-1} and  in Theorem \ref{main-thrm-2}  that only depends on $g$, $h(A)$, $[K:\Q]$ and $d_K$.
\end{remark}

\begin{remark} The assumption that each adelic Galois image $G_{A_i}$ is open in $\GSp_{2g}(\widehat{\Z})$ is crucial to Theorem \ref{main-thrm-1}. Indeed, it follows from a result of Lombardo \cite[Theorem 1.1]{LombardoIsoKummerian} that without this assumption,  for all $g \geq 4$, there are infinitely many pairs of abelian varieties $A_1$ and $A_2$ that satisfy \eqref{main-thm-isogenous} of Theorem \ref{main-thrm-1}, yet fail \eqref{main-thm-adelic-image} and \eqref{main-thm-ell-adic-image}. On the other hand, the assumption that each $G_{A_i}$ is open in $\GL_2(\widehat{\Z})$ does not appear in Corollary \ref{main-cor}, because Serre's open image theorem already guarantees that this is that case since each $A_i$ is without complex multiplication.
\end{remark}

\begin{remark}
We now compare the bound for $c(A_1\times \cdots \times A_n)$ in  Corollary \ref{main-cor}  with an existing result that is given in  \cite[Proposition 1]{MR1209248} (cf. \cite[Proposition 2.5]{MR3515826}). Indeed,  \cite[Proposition 1]{MR1209248} gives $c(A_1\times \cdots \times A_n)\leq c \max_{1\leq i\leq n} \{1, h(A_i)\}^{\gamma}$, where $c$ and $\gamma$ are completely effective, though $\gamma$ is large and dependent on $n$. 
Although our bound is in terms of the conductors of $A_1,\ldots,A_n$, a comparison between the two results can be made under an appropriate conjecture. For each $1\leq i\leq n$, by the generalized Szpiro conjecture and the relation between the minimal discriminant  and Faltings height of $A_i$ \cite[Chapter VI]{CornellSilverman},  we expect the following inequality
\[
\alpha \log N_{A_i} \leq  |h(A_i)| \leq \beta \log N_{A_i}
\]
holds for some  absolute positive constants $\alpha, \beta\in \mathbb{R}$. Therefore, we expect that our bound given in Corollary \ref{main-cor} offers a power saving in  the logarithm of the conductors.
\end{remark}

To conclude the introduction, we offer a sketch of the proof for Theorem \ref{main-thrm-1}. We will show in Section \ref{reduce-to-two} that the proof can be reduced to considering a product of two principally polarized abelian varieties.  Furthermore, utilizing Proposition \ref{lift-SpSp}, for $\ell \geq 5$, checking the surjectivity of the $\ell$-adic Galois representation $\rho_{A, \ell}$ can be reduced to checking the surjectivity of the mod $\ell$ Galois representation $\overline{\rho}_{A, \ell}$. Then in Section \ref{S3}, we provide a group-theoretic criterion, in terms of traces and characteristic polynomials, to determine when a subgroup of $\Delta_{2g, 2}(\F_{\ell})$ is equal to $\Delta_{2g, 2}(\F_{\ell})$ (Corollary \ref{corr-class}); additionally in Section \ref{S4}, we offer an effective version of Faltings's isogeny theorem (Theorem \ref{effective-Faltings}) for abelian varieties over the number field $K$ with trivial geometric endomorphism rings to distinguish their isogeny classes over $\overline{K}$.  Finally, taking inspiration from \cite[Section 8]{MR644559}, we then apply the preceding results and Weil's bound to give an effective bound on $c(A_1 \times \cdots \times A_n)$ in Section \ref{product-two}. 
Building on the aforementioned group-theoretic criterion, Section \ref{S6} contains three numerical examples and an algorithm. The algorithm takes in two hyperelliptic curves $C_1$ and $C_2$, along with a positive integer $B$,  and produces a nonnegative integer $\Lambda$ which, if nonzero, serves to bound the largest nonsurjective prime associated with the product of Jacobians $\Jac(C_1) \times \Jac(C_2)$. 
We implemented the algorithm in Sage. The code is available at the public repository:

\centerline{\url{https://github.com/maylejacobj/ProductPPAVs}.}

We conclude the introduction by stating Theorem \ref{effective-Faltings}, which is an effective version of Faltings's isogeny theorem for abelian varieties that  may carry independent interest. Let $A_1$ and $A_2$ be principally polarized abelian varieties over $K$ that are not $K$-isogenous. Achter \cite[Lemma 1.2]{MR2181871} gave an explicit upper bound on the smallest norm of an ideal $\mathfrak{p}$ of $K$ such that the reductions of $A_1$ and $A_2$ are non-isogenous modulo $\mathfrak{p}$. Achter's result extends an earlier result of Serre 
\cite[Section 8.3, pp.\ 191--196]{MR644559}
 that considered elliptic curves over $\mathbb{Q}$. We now state our theorem, which provides an explicit bound in the case where two ``generic'' abelian varieties are not {geometrically} isogenous. (See also relevant work by Bucur, Fit\'e, and Kedlaya   \cite[Corollary 1.3]{MR4735820}.)

\begin{theorem}\label{effective-Faltings}
Assume GRH. Let $A_1/K$ and $A_2/K$ be principally polarized abelian varieties of a common dimension $g$.  Assume that the adelic Galois image $G_{A_i}$  is open in $\GSp_{2g}(\widehat{\Z})$ for each $i\in \{1,2 \}$.  If $A_1$ and $A_2$ are not $\overline{K}$-isogenous, then there exists a prime $\fp$ of $K$ that is of good reduction for $A_1$ and $A_2$ such that $a_\fp(A_1)\neq \pm a_\fp(A_2)$, satisfying the bound
\[N(\fp)\leq  \left(\tilde{a} \left(c(K, g)+  c(K, A_1, A_2)\right) + 2\tilde{b} (\ell_g^{8g^2}-1)^2  + \tilde{c}\right)^2,\]
where  $\tilde{a}$, $\tilde{b}$, and $\tilde{c}$ are the explicit constants in Corollary \ref{ECDT-cor}.
\end{theorem}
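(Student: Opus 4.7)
The plan is to deduce the theorem from an application of the GRH-conditional effective Chebotarev theorem (Corollary \ref{ECDT-cor}) to a carefully chosen finite Galois extension of $K$. Since a trace inequality $a_\fp(A_1)\not\equiv\pm a_\fp(A_2) \pmod{\ell_0}$ in $\F_{\ell_0}$ already implies the desired inequality $a_\fp(A_1)\neq\pm a_\fp(A_2)$ in $\Z$, the problem reduces to finding a Frobenius with appropriate mod $\ell_0$ behavior.

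Concretely, I would set $L\coloneqq K(A_1[\ell_0], A_2[\ell_0])$ and identify $\Gal(L/K)$ with its image in $\Delta_{2g,2}(\F_{\ell_0})$ under $\bar\rho_{A_1\times A_2,\ell_0}$. For any prime $\fp$ of $K$ of good reduction for both $A_i$ and unramified in $L$, writing $\bar\rho_{A_1\times A_2,\ell_0}(\Frob_\fp) = (\gamma_1,\gamma_2)$ gives $\tr(\gamma_i) \equiv a_\fp(A_i)\pmod{\ell_0}$. I would then introduce the conjugation-stable subset
\[
C \coloneqq \set{(\gamma_1,\gamma_2)\in \Gal(L/K) : \tr(\gamma_1) \neq \pm\tr(\gamma_2) \text{ in } \F_{\ell_0}}
\]
so that any $\fp$ of good reduction with $\Frob_\fp \in C$ is a witness prime.

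The main obstacle is proving that $C$ is nonempty under the non-isogeny hypothesis; this is precisely the generalization of Serre's lemma developed in Section \ref{S4}. Working by contrapositive, if every Frobenius $(\gamma_1,\gamma_2)$ in the image satisfies $\tr(\gamma_1) = \pm\tr(\gamma_2)$, the hypotheses that $\ell_0\nmid 2g$ and that each $G_{A_i}$ is open combine to supply enough semisimple elements to lift the mod $\ell_0$ trace identity to a trace identity for the $\ell_0$-adic representations over $\Q_{\ell_0}$ (after possibly passing to a quadratic twist to fix the sign). Brauer--Nesbitt then upgrades this to a $\Gal(\overline{K}/K)$-equivariant isomorphism between the rational $\ell_0$-adic Tate modules of $A_1$ and $A_2$ (or a twist thereof), which by Faltings's isogeny theorem forces a $\overline{K}$-isogeny, a contradiction.

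Once $C\neq\emptyset$ is established, the final step is to invoke Corollary \ref{ECDT-cor}. The degree $[L:K]$ is bounded above by $|\Delta_{2g,2}(\F_{\ell_0})| \leq (\ell_0^{8g^2}-1)^2$, which accounts for the prefactor $4(\ell_0^{8g^2}-1)^4$ in the form $|\Gal(L/K)|^2$. The discriminant $d_L$ can be controlled via the conductor-discriminant formula together with the observation that the primes ramifying in $L/K$ lie above $\ell_0$ or are primes of bad reduction for $A_1$ or $A_2$, yielding the dependence on $\log d_K$ and $[K:\Q] \log \rad(2 \ell_0 N_{A_1}N_{A_2}d_K)$ inside the logarithmic factor. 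Substituting these estimates into the effective Chebotarev bound produces the stated inequality on $N(\fp)$.
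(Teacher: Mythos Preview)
Your approach has a genuine gap at the step ``$C\neq\emptyset$''. You define $C$ purely in terms of the \emph{mod $\ell_0$} traces, and then argue by contrapositive that if every $(\gamma_1,\gamma_2)$ in the mod $\ell_0$ image satisfies $\tr\gamma_1=\pm\tr\gamma_2$, one can ``lift'' this to an $\ell_0$-adic trace identity and invoke Brauer--Nesbitt plus Faltings. But a mod $\ell_0$ trace congruence simply does not lift in general: there exist non-isogenous pairs $A_1,A_2$ with $a_\fp(A_1)\equiv \pm a_\fp(A_2)\pmod{\ell_0}$ for every $\fp$ of good reduction (the $13$-congruence in Example~\ref{E:ECs} is exactly this phenomenon with $\ell_0=13$; nothing prevents it at the particular small prime $\ell_0$ determined by $g$). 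In that situation your set $C$ is empty and the argument stalls. Note also that the open-image hypothesis gives surjectivity of $\bar\rho_{A_i,\ell}$ only for large $\ell$, so you cannot assume $G_{A_i}(\ell_0)=\GSp_{2g}(\F_{\ell_0})$ either.

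The paper circumvents this by working $\ell_0$-adically throughout: Proposition~\ref{quadratic-iso} first guarantees some $\fp_0$ with $a_{\fp_0}(A_1)\neq\pm a_{\fp_0}(A_2)$ in $\Z$ (not merely mod $\ell_0$), and then the extension of Serre's lemma (Proposition~\ref{lemma-Serre-isogeny}) constructs a finite quotient $G$ of the $\ell_0$-adic image, of order at most $(\ell_0^{8g^2}-1)^2$, together with a conjugacy-stable $C_\pm$ detecting $\tr\rho_1\neq\pm\tr\rho_2$. The point of that proposition is precisely to locate the correct power $\ell_0^m$ (respectively $\ell_0^{m_\pm}$) at which the trace difference becomes visible, which your mod $\ell_0$ setup cannot do. The field $L$ to which Corollary~\ref{ECDT-cor} is applied is the one cut out by this quotient, not $K(A_1[\ell_0],A_2[\ell_0])$; the degree bound $(\ell_0^{8g^2}-1)^2$ and the ramification estimate then feed into Chebotarev exactly as you outline in your final paragraph.
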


\begin{remark}
Similar to Remark \ref{remark-1}, without assuming GRH, we can get an explicit bound of $N(\mathfrak{p})$ that grows exponentially in terms of  $N_{A_1}$, $N_{A_2}$, and $d_K$.
\end{remark}
\subsection*{Acknowledgments} We wish to thank Davide Lombardo and Ramin Takloo-Bighash for their helpful comments.  The second author is partially founded by Dean's Scholar Fellowship at the University of Illinois at Chicago and  also wishes to thank the Max-Planck-Institut f\"{u}r Mathematik in Bonn for its support and inspiring atmosphere. Lastly, we are  grateful for the many helpful comments and suggestions the referee provided.

\section{Preliminaries}\label{S2}

\subsection{Some results from group theory}

We begin by recalling the definition of the fiber product of groups and Goursat's lemma. Let $G_1$ and $G_2$ be groups, and for $i \in \set{1,2}$, let $\pr_i$ denote the natural projection map given by
\[\pr_i: G_1 \times G_2 \twoheadrightarrow G_i.\]
Consider a group $Q$ and   surjective group homomorphisms $\phi_1: G_1 \twoheadrightarrow Q$ and $\phi_2: G_2 \twoheadrightarrow Q$. The \emph{fiber product} of $G_1$ and $G_2$ over the pair $(\phi_1,\phi_2)$ is the subgroup
\begin{equation*} \label{fiber-def}
   G_1 \times_{(\phi_1,\phi_2)} G_2 \coloneqq  \set{(\gamma_1,\gamma_2) \in G_1 \times G_2 : \phi_1(\gamma_1) = \phi_2(\gamma_2)}
\end{equation*}
of $G_1 \times G_2$. It is clear that $\pr_i(G_1 \times_{(\phi_1,\phi_2)} G_2) = G_i$ for each $i \in \set{1,2}$. Goursat's lemma asserts that all subgroups of $G_1 \times G_2$, for which $\pr_1$ and $\pr_2$ are both surjective, arise as fiber products, as we now recall.

\begin{lemma} \label{Goursat} Let $G \subseteq G_1 \times G_2$ be a subgroup. Then  $\pr_i(G) = G_i$ holds for each $i \in \set{1,2}$ if and only if $G$ is the fiber product $G = G_1 \times_{(\phi_1,\phi_2)} G_2$ for some surjective homomorphisms  $\phi_1$ and $\phi_2$. 
\end{lemma}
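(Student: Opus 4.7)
The ``if'' direction is immediate: if $G = G_1 \times_{(\phi_1,\phi_2)} G_2$, then for any $\gamma_1 \in G_1$, surjectivity of $\phi_2$ gives $\gamma_2 \in G_2$ with $\phi_2(\gamma_2) = \phi_1(\gamma_1)$, so $(\gamma_1,\gamma_2) \in G$ and $\pr_1(G) = G_1$; symmetrically $\pr_2(G) = G_2$.

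For the converse, my plan is to exhibit a common quotient of $G_1$ and $G_2$ that realizes $G$ as a fiber product. The natural candidates are
\[
N_1 \coloneqq \set{\gamma_1 \in G_1 : (\gamma_1, e) \in G}, \quad N_2 \coloneqq \set{\gamma_2 \in G_2 : (e, \gamma_2) \in G}.
\]
First I would check that $N_1 \trianglelefteq G_1$: given $\gamma_1 \in G_1$, use $\pr_1(G) = G_1$ to pick $\gamma_2 \in G_2$ with $(\gamma_1,\gamma_2) \in G$, and conjugate $(n,e) \in G$ by $(\gamma_1,\gamma_2)$ to see $(\gamma_1 n \gamma_1^{-1}, e) \in G$. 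By symmetry $N_2 \trianglelefteq G_2$. Let $Q \coloneqq G_1/N_1$ and $\phi_1 : G_1 \twoheadrightarrow Q$ the quotient map. Next, consider the homomorphism $\psi : G \to G_1/N_1$ sending $(\gamma_1, \gamma_2) \mapsto \gamma_1 N_1$. The key computation is that $\ker \psi = N_1 \times N_2$: if $(\gamma_1,\gamma_2) \in G$ with $\gamma_1 \in N_1$, then $(\gamma_1, e) \in G$, so $(e,\gamma_2) = (\gamma_1,e)^{-1}(\gamma_1,\gamma_2) \in G$, forcing $\gamma_2 \in N_2$; the reverse inclusion is immediate. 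By the same argument applied on the other side, the analogous map $G \to G_2/N_2$ has the same kernel. This produces a canonical isomorphism $\theta : G_2/N_2 \xrightarrow{\sim} G_1/N_1$, and I would define $\phi_2 : G_2 \twoheadrightarrow Q$ as the composition of $G_2 \twoheadrightarrow G_2/N_2$ with $\theta$.

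It then remains to verify $G = G_1 \times_{(\phi_1,\phi_2)} G_2$. The inclusion $G \subseteq G_1 \times_{(\phi_1,\phi_2)} G_2$ is forced by the construction of $\theta$ out of $\psi$. For the reverse inclusion, suppose $\phi_1(\gamma_1) = \phi_2(\gamma_2)$; using $\pr_1(G) = G_1$, pick $\gamma_2' \in G_2$ with $(\gamma_1, \gamma_2') \in G$, and observe $\phi_2(\gamma_2') = \phi_1(\gamma_1) = \phi_2(\gamma_2)$, so $\gamma_2' \gamma_2^{-1} \in N_2$ and hence $(e, \gamma_2'\gamma_2^{-1}) \in G$, from which $(\gamma_1, \gamma_2) = (e, \gamma_2 (\gamma_2')^{-1})(\gamma_1, \gamma_2') \in G$. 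The proof involves no serious obstacle; the only conceptual point is recognizing that the pair $(N_1, N_2)$ and the diagonal identification $G_1/N_1 \cong G_2/N_2$ coming from $\psi$ produce the required data $(\phi_1, \phi_2)$. Everything else is a routine diagram chase.
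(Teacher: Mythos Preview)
Your proof is correct and is the standard argument for Goursat's lemma. The paper itself does not supply a proof but simply cites Goursat's original paper and Lang's \emph{Algebra}; your construction of $N_1,N_2$ and the identification $G_1/N_1\cong G/(N_1\times N_2)\cong G_2/N_2$ is exactly the classical approach one finds in those references.
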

\begin{proof} See \cite[p.\ 45]{MR1508819} and, for instance, \cite[p.\ 75]{MR1878556}.
\end{proof}

Next,  we state a proposition on the automorphism group $\Aut(G)$ of a finite group $G$ of a certain form. Recall that a subgroup $N$ of $G$ is called a \emph{characteristic subgroup} if $\sigma(N)=N$ for every $\sigma\in \Aut(G)$. Let $G=N \rtimes H$ be an internal semidirect product of subgroups $N$ and $H$ of $G$. Recall that a map $\beta\colon H\to Z(N)$ is a \emph{crossed homomorphism} if for any $k, k'\in H$, 
\[
\beta(kk')=\beta(k)\cdot(k\beta(k')k^{-1}),
\]
where  $Z(\cdot)$ denotes the  center of a group. We denote by $Z^{1}(H,Z(N))$ the group of all crossed homomorphisms from $H$ to $Z(N)$.

\begin{proposition} \label{aut-semi} Let $G$ be a finite group. Let $H$ be a subgroup of $G$ and $N$ be a characteristic subgroup of $G$. If $G = N \rtimes H$, then there exists a homomorphism $\Aut(G) \to \Aut(N) \times \Aut(H)$ with kernel isomorphic to $Z^{1}(H,Z(N))$.
\end{proposition}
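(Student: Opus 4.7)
The plan is to construct the homomorphism $\Phi \colon \Aut(G) \to \Aut(H) \times \Aut(K)$ explicitly, and then identify its kernel with $Z^1(K, Z(H))$ using the semidirect product decomposition.

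First I would define $\Phi$. Given $\sigma \in \Aut(G)$, the hypothesis that $H$ is characteristic means $\sigma(H) = H$, so restriction yields $\sigma|_H \in \Aut(H)$. Because $H$ is normal, $\sigma$ also descends to an automorphism $\tilde{\sigma}$ of $G/H$. The semidirect decomposition $G = H \rtimes K$ provides a canonical isomorphism $K \xrightarrow{\sim} G/H$ via $k \mapsto kH$, so $\tilde{\sigma}$ corresponds to an automorphism $\bar{\sigma} \in \Aut(K)$. Setting $\Phi(\sigma) := (\sigma|_H, \bar{\sigma})$, a direct check using the functoriality of restriction and quotient gives that $\Phi$ is a group homomorphism.

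Next I would analyze $\ker \Phi$. An element $\sigma \in \ker \Phi$ acts as the identity on $H$ and satisfies $\sigma(k)H = kH$ for every $k \in K$. Writing $\sigma(k) = \beta(k)\, k$ with $\beta(k) := \sigma(k)k^{-1} \in H$, this defines a function $\beta \colon K \to H$. Applying $\sigma$ to the relation $kh = (khk^{-1})\, k$ and using $\sigma|_H = \id_H$ gives
\[
\sigma(k)\,h = \sigma(kh) = (khk^{-1})\,\sigma(k) = (khk^{-1})\,\beta(k)\, k,
\]
which, after cancelling $k$ on the right and expanding $\sigma(k) = \beta(k)k$, forces $\beta(k)(khk^{-1}) = (khk^{-1})\beta(k)$ for every $h \in H$. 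Since $H$ is normal, $khk^{-1}$ ranges over all of $H$ as $h$ does, so $\beta(k) \in Z(H)$. Similarly, expanding $\sigma(kk') = \sigma(k)\sigma(k')$ yields the crossed homomorphism identity $\beta(kk') = \beta(k)\cdot (k\beta(k')k^{-1})$, where $K$ acts on $Z(H)$ by conjugation (well-defined because $Z(H)$ is characteristic in $H$). Conversely, any $\beta \in Z^1(K, Z(H))$ gives a set-theoretic map $\sigma(hk) := h\beta(k)k$; the same computations read in reverse show that $\sigma$ is an endomorphism, and $\sigma$ is bijective because $k \mapsto \beta(k)^{-1}$ is again a crossed homomorphism (using that $Z(H)$ is abelian) and provides the inverse.

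Finally I would check that the bijection $\ker \Phi \to Z^1(K, Z(H))$, $\sigma \mapsto \beta_\sigma$, is a group isomorphism. For $\sigma, \sigma' \in \ker \Phi$ one computes
\[
(\sigma \sigma')(k) = \sigma(\beta_{\sigma'}(k)\, k) = \beta_{\sigma'}(k)\,\beta_\sigma(k)\, k,
\]
again using $\sigma|_H = \id_H$; the abelianness of $Z(H)$ lets us rewrite the prefactor as $\beta_\sigma(k)\beta_{\sigma'}(k)$, which is exactly the pointwise product in $Z^1(K, Z(H))$. The main obstacle is the centering step: one must extract from the multiplicativity of $\sigma$ on mixed products $kh$ that $\beta(k)$ lies in $Z(H)$ rather than merely in $H$. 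This is precisely where the characteristic (as opposed to just normal) hypothesis on $H$ together with the splitting $G = H \rtimes K$ is used; once this is in hand, the crossed homomorphism identity and the matching of group laws are routine.
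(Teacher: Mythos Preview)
Your argument is correct and complete; the paper itself does not prove this proposition but only cites a reference, and what you have written is exactly the standard construction one finds there. One small expository correction: in your final paragraph you say the characteristic hypothesis on $H$ is used ``precisely'' at the centering step, but in fact that step needs only normality of $H$ (so that $kHk^{-1}$ sweeps out all of $H$). The place where ``characteristic'' rather than merely ``normal'' is genuinely required is your very first move, ensuring $\sigma(H)=H$ so that $\sigma|_H\in\Aut(H)$ and the induced map on $G/H$ are defined for every $\sigma\in\Aut(G)$.
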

\begin{proof} See \cite[Theorem 1, p.\ 206]{MR2475812}  and the remarks following its proof.
\end{proof}

\subsection{Properties of symplectic groups}\label{sub-2.2} In this subsection, we give a brief introduction to the symplectic group and some of its properties that will be used in the paper. For a more comprehensive introduction, we refer the reader to  \cite{MR502254}.

Let $R$ be a commutative ring with unity and let $R^{\times}$ denote the group of units of $R$. For a fixed  positive integer $g$,  let $M_{2g\times 2g}(R)$ be the $R$-algebra of all $2g\times 2g$ matrices with entries in $R$. 
We define the \emph{general symplectic group} over $R$ of dimension $2g$ as
\[ \GSp_{2g}(R) \coloneqq \set{\gamma \in M_{2g\times 2g}(R) \colon \gamma^t \Omega_{2g} \gamma = \mult(\gamma) \Omega_{2g} \text{ for some } \mult(\gamma) \in R^\times}, \]
where $\Omega_{2g} \coloneqq \begin{psmallmatrix} 0 & I_g\\ -I_g & 0\end{psmallmatrix}$, $\gamma^t$ is the transpose of $\gamma$, and $\mult(\gamma)$ is called the \emph{multiplier} of $\gamma$. The multiplier map $\gamma \mapsto \mult(\gamma)$ is a well-defined surjective group homomorphism
\begin{equation*} \label{mult-surj} 
\mult\colon \GSp_{2g}(R) \twoheadrightarrow R^\times.
\end{equation*}
For each $\gamma \in \GSp_{2g}(R)$, we have that $ \mult(\gamma)^g = \det(\gamma)$.  In particular, we have the containment
\[ \GSp_{2g}(R) \subseteq \GL_{2g}(R) \]
and the equality holds if $g = 1$. The \emph{symplectic group} $\Sp_{2g}(R)$ is the kernel of the multiplier map:
\[ \Sp_{2g}(R) \coloneqq \set{\gamma \in M_{2g\times 2g}(R) \colon \gamma^t \Omega_{2g} \gamma =  \Omega_{2g}}.  \]

Now, we focus our attention on symplectic groups over $R = \F_\ell$. We observe that
\begin{equation} \label{GSp-semi}
     \GSp_{2g}(\F_\ell) \cong \Sp_{2g}(\F_\ell) \rtimes \iota(\F_\ell^\times),
\end{equation}
where $\iota \colon \F_\ell^\times \to \GSp_{2g}(\F_\ell)$ is given by $ a \mapsto \begin{psmallmatrix} a I_g & 0 \\ 0 & I_g \end{psmallmatrix}$, since  $\iota$ splits the exact sequence
\[ 1 \to \Sp_{2g}(\F_\ell) \to \GSp_{2g}(\F_\ell) \to \F_\ell^\times \to 1. \]
The orders of $\Sp_{2g}(\F_\ell)$ and $\GSp_{2g}(\F_\ell)$ \cite[Theorem 3.1.2]{MR502254} are given by
\begin{equation}  \label{E:gsp-order} \abs{\Sp_{2g}(\F_\ell)} = \ell^{g^2} \prod_{i = 1}^{g} (\ell^{2i} - 1) \quad \text{and} \quad \abs{\GSp_{2g}(\F_\ell)} = (\ell-1)\ell^{g^2} \prod_{i = 1}^{g} (\ell^{2i} - 1).
\end{equation}
Let $R_{2g}(\F_\ell)$ denote the group of scalar matrices
\[R_{2g}(\F_\ell) \coloneqq  \set{\lambda I_{2g} : \lambda \in \F_\ell^\times} \trianglelefteq \GSp_{2g}(\F_\ell). \]
Since $\mult(\lambda I_{2g} ) = \lambda^2$, we have that
\[ \Sp_{2g}(\F_\ell) \cap R_{2g}(\F_\ell) = \set{\pm I_{2g}}. \]
The \emph{projective general symplectic group} and \emph{projective symplectic group} are, respectively,
\[
\PGSp_{2g}(\F_\ell) \coloneqq  \GSp_{2g}(\F_\ell)/R_{2g}(\F_\ell) 
\quad \text{and} \quad
\PSp_{2g}(\F_\ell) \coloneqq  \Sp_{2g}(\F_\ell)/\set{\pm I_{2g}}.
\]

The next lemma records several basic properties of the groups just defined.

\begin{lemma} \label{L:GSpProperties} If $\ell \geq 5$, then each of the following statements hold.
\begin{enumerate}
    \item \label{GSppm} We have that $\GSp_{2g}(\F_\ell) / \set{\pm I_{2g}} \cong  \PSp_{2g}(\F_\ell) \rtimes \bar{\iota}(\F_{\ell}^{\times})$, where $\bar{\iota}$ is the composition of $\iota$ with the natural reduction map $\GSp_{2g}(\F_\ell) \to \GSp_{2g}(\F_\ell) / \set{\pm I_{2g}}$.
    \item \label{L:centers} We have that $Z(\GSp_{2g}(\F_\ell))= R_{2g}(\F_\ell)$. Hence $Z(\PGSp_{2g}(\F_\ell))$ and $Z(\PSp_{2g}(\F_\ell))$ are trivial.
    \item \label{L:GSpCharSub} The group $\Sp_{2g}(\F_\ell)$ is a characteristic subgroup of $\GSp_{2g}(\F_\ell)$.
    \item  The normal subgroups of $\Sp_{2g}(\F_\ell)$ are $\set{I_{2g}}$, $\set{\pm I_{2g}}$, and $\Sp_{2g}(\F_\ell)$. \label{normal-Sp}
    \item  Every automorphism of $\Sp_{2g}(\F_\ell)$ is given by conjugation by a matrix in $\GSp_{2g}(\F_\ell)$. It follows that $\Aut(\Sp_{2g}(\F_\ell)) \cong \PGSp_{2g}(\F_\ell)$. Moreover, every automorphism of $\PSp_{2g}(\F_\ell)$ is induced by an automorphism of $\Sp_{2g}(\F_\ell)$.
    \label{Aut-Sp}
\end{enumerate}
\end{lemma}
\begin{proof}
\begin{enumerate}
    \item Follows similarly to \eqref{GSp-semi}. 
    \item See \cite[3.2.1 p.\ 37]{MR502254} and \cite[4.2.5 p.\  52]{MR502254}.
    \item See \cite[3.3.6, p.\ 41]{MR502254}.
    \item See \cite[1.7.5, p.\  20]{MR502254} and  \cite[3.4.1, p.\  41]{MR502254}.
     \item Follows from \cite[Ch.\ VI, Thm.\ 9]{MR606555} and part \eqref{L:centers}. \qedhere
\end{enumerate}
\end{proof}

It is sometimes useful to consider the Lie algebra
\begin{align*} \label{block-symp}
    \mathfrak{sp}_{2g}(R) \coloneqq& \{\gamma \in M_{2g\times 2g}(R) : \gamma^t \Omega_{2g}=-\Omega_{2g} \gamma \} \\
    =& \set{ \begin{psmallmatrix} A & B\\ C & -A^t \end{psmallmatrix}: A, B, C \in M_{g\times g}(R)  \text{ and $B,C$ are symmetric matrices} }.
\end{align*}
For instance, if $k$ is a positive integer and $\ell$ is a prime, then 
\[
\ker (\Sp_{2g}(\Z/\ell^{k+1}\Z)\twoheadrightarrow \Sp_{2g}(\Z/\ell^k\Z)) = I_{2g} + \ell^k \mathfrak{sp}_{2g}(\F_{\ell}),
\]
where the map is given by reduction modulo $\ell^k$ (see \cite[Sec.\ 2.1]{MR4195609}).

Let $n$ and $g_1, \ldots, g_n$ be positive integers and again let $R$ be a commutative ring with unity. As we mentioned in the introduction, we are interested in the $n$-fold fiber product $\Delta_{g_1,\ldots, g_n}(R)$ given by
\begin{equation*}
    \Delta_{g_1,\ldots, g_n}(R) \coloneqq  \set{(\gamma_1, \ldots, \gamma_n) \in \prod_{i=1}^n \GSp_{2g_i}(R)  : \mult(\gamma_1) = \cdots = \mult(\gamma_n)}\!.
\end{equation*}
With some abuse of notation, we often consider the homomorphism $\mult\colon \Delta_{g_1,\ldots, g_n}(R) \twoheadrightarrow R^\times$ given by $(\gamma_1,\ldots,\gamma_n) \mapsto \mult(\gamma_1)$. The kernel of this map is the group
\[
\delta_{g_1,\ldots, g_n}(R) \coloneqq \Sp_{2g_1}(R)\times \cdots \times \Sp_{2g_n}(R) \subseteq \Delta_{g_1, \ldots, g_n}(R).
\]
For $g \coloneqq g_1=\ldots =g_n$, we write 
\begin{equation*} \label{Delta-def}
\Delta_{2g,n}(R)  \coloneqq  \Delta_{g, \ldots, g}(R) \quad \text{ and } \quad \delta_{2g,n}(R)  \coloneqq  \delta_{g, \ldots, g}(R).
\end{equation*}
Further, we  write  $\Delta_{2g}(R)$ and $\delta_{2g}(R)$  as shorthand for $\Delta_{2g,2}(R)$ and  $\delta_{2g,2}(R)$, respectively, i.e., 
\begin{equation*} \label{E:Delta}
\Delta_{2g}(R) = \GSp_{2g}(R) \times_{(\mult,\mult)} \GSp_{2g}(R) \quad \text{ and } \quad \delta_{2g}(R) = \Sp_{2g}(R) \times \Sp_{2g}(R).
\end{equation*}
Note that if $R$ is finite, then by the surjectivity of the multiplier map, the order of $\Delta_{2g}(R)$ is given by
\begin{equation} \label{order-Delta}
\abs{\Delta_{2g}(R)} = \frac{\abs{\GSp_{2g}(R)}^2}{\abs{R^\times}}.
\end{equation}

Next, we state two group-theoretic results which are generalizations of some well-known results. The first is a straightforward generalization of \cite[Lemme 2.20, p. 40]{MR2862374}  (which itself generalizes \cite[Theorem 1]{MR3667841} and \cite[Lemma 3, p. IV-23]{MR1484415}). We briefly give some notation. Let $\widehat{\Z}$ denote the ring of profinite integers. For a closed subgroup $G \subseteq \Delta_{g_1,\ldots,g_n}(\widehat{\Z})$ and a prime $\ell$, we write $G_\ell$ and $G(\ell)$ for the images of $G$ in $\Delta_{g_1,\ldots,g_n}(\Z_\ell)$ and $\Delta_{g_1,\ldots,g_n} (\F_\ell)$, respectively.
\begin{proposition} \label{lift-SpSp} Assume $\ell \geq 5$. Let $G \subseteq \Delta_{g_1,g_2}(\Z_\ell)$ be a closed subgroup. If $\delta_{g_1,g_2}(\F_\ell) \subseteq G(\ell)$, then $\delta_{g_1,g_2}(\Z_\ell) \subseteq G$.
\end{proposition}

The second result is a straightforward generalization of \cite[Lemma 2.8]{MR3981312} (which itself generalizes  \cite[Main Lemma, p.\ IV-19]{MR1484415}).

\begin{proposition}\label{group-open} Let $G \subseteq \Delta_{g_1, \ldots, g_n}(\widehat{\Z})$ be a closed subgroup. Then $G$ is open in $\Delta_{g_1, \ldots, g_n}(\widehat{\Z})$ if and only if
\begin{enumerate}
    \item for each sufficiently large prime $\ell$,  $G(\ell) = \Delta_{g_1, \ldots, g_n}(\F_{\ell})$, \label{red-ell-full}
    \item for each prime $\ell$, $G_{\ell}$ is open in $\Delta_{g_1, \ldots, g_n}(\Z_{\ell})$, and \label{all-ell-open}
   \item the image of $G$ under $\mult \colon \Delta_{g_1, \ldots, g_n}(\widehat{\Z}) \to \widehat{\Z}^{\times}$ is open in $\widehat{\Z}^\times$.\label{mult-open}
\end{enumerate} 
\end{proposition}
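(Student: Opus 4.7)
The implication that $G$ open implies (1), (2), (3) is routine: if $G$ contains the kernel of reduction modulo some $N \in \Z_{>0}$, then for every prime $\ell \nmid N$ this kernel surjects onto $\Delta_{2g}(\F_\ell)$, giving (1); meanwhile, openness is preserved under the continuous projections $\Delta_{2g}(\widehat{\Z}) \twoheadrightarrow \Delta_{2g}(\Z_\ell)$ and $\mult : \Delta_{2g}(\widehat{\Z}) \twoheadrightarrow \widehat{\Z}^\times$, giving (2) and (3).

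For the converse, write $\delta \coloneqq \Sp_{2g}(\widehat{\Z}) \times \Sp_{2g}(\widehat{\Z})$, which is the kernel of $\mult : \Delta_{2g}(\widehat{\Z}) \twoheadrightarrow \widehat{\Z}^\times$. Since (3) says $\mult(G)$ is open in $\widehat{\Z}^\times$, it suffices to show that $G^{\sharp} \coloneqq G \cap \delta$ is open in $\delta$. Using (1), fix $\ell_1 \geq 5$ with $G(\ell) = \Delta_{2g}(\F_\ell)$ for all $\ell > \ell_1$; then $\delta_{2g}(\F_\ell) = \ker\mult \subseteq G(\ell)$, and Proposition \ref{lift-SpSp} lifts this to $\delta_{2g}(\Z_\ell) \subseteq G_\ell$ for each such $\ell$. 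In particular, $G^{\sharp}$ projects surjectively onto each factor $\delta_{2g}(\Z_\ell)$ with $\ell > \ell_1$. An iterated Goursat argument (Lemma \ref{Goursat}), combined with the normal subgroup structure of $\Sp_{2g}(\F_\ell)$ and the non-abelian simplicity and pairwise non-isomorphism (across distinct $\ell \geq 5$) of the simple quotients $\PSp_{2g}(\F_\ell)$ recorded in Lemma \ref{L:GSpProperties}, then forces the projection of $G^{\sharp}$ onto $\prod_{\ell > \ell_1} \delta_{2g}(\Z_\ell)$ to be the full product. For each remaining prime $\ell \leq \ell_1$, condition (2) gives that $G_\ell$ is open in $\Delta_{2g}(\Z_\ell)$, hence $G^{\sharp}_\ell = G_\ell \cap \delta_{2g}(\Z_\ell)$ is open in $\delta_{2g}(\Z_\ell)$. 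Taking the product yields an open subgroup of $\prod_{\ell \leq \ell_1} \delta_{2g}(\Z_\ell)$ contained in the corresponding projection of $G^{\sharp}$, and gluing with the large-prime conclusion exhibits $G^{\sharp}$ as containing a set of the form $U \times \prod_{\ell > \ell_1} \delta_{2g}(\Z_\ell)$ with $U$ open, proving that $G^{\sharp}$ is open in $\delta$.

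The main obstacle is the large-prime sewing step — showing that a closed subgroup of $\prod_{\ell > \ell_1} \delta_{2g}(\Z_\ell)$ surjecting onto every factor must be the full product. Within a single prime the potential diagonal amalgamations inside $\delta_{2g}(\Z_\ell) = \Sp_{2g}(\Z_\ell)^2$ are already absorbed by Proposition \ref{lift-SpSp}; across distinct primes, one reduces modulo $\ell$, applies Goursat's lemma, and uses the normal subgroup analysis of $\Sp_{2g}(\F_\ell)$ to rule out any nontrivial common quotient of $\PSp_{2g}(\F_\ell)$ and $\PSp_{2g}(\F_{\ell'})$, before reassembling via the inverse limit. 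The small-prime openness bookkeeping is then straightforward, and the combination with (3) promotes openness of $G^{\sharp}$ in $\delta$ to openness of $G$ in $\Delta_{2g}(\widehat{\Z})$.
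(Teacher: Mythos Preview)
The paper does not actually prove this proposition: it states that the result ``is a straightforward generalization of \cite[Lemma 2.8]{MR3981312} (which itself is a generalization of \cite[Main Lemma, p.\ IV-19]{MR1484415})'' and that ``its proof follows in the same way and is omitted for brevity.'' Your sketch is precisely the standard Serre--Ribet argument those references encode, so in spirit you are doing exactly what the paper intends.

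Two small points where your write-up would benefit from an extra sentence. First, the passage from $\delta_{2g}(\Z_\ell)\subseteq G_\ell$ to ``$G^{\sharp}$ projects surjectively onto $\delta_{2g}(\Z_\ell)$'' is not literally an ``in particular'': one needs that the closure of $[G,G]$ lies in $G^{\sharp}$ and already surjects onto $[G_\ell,G_\ell]^{\mathrm{cl}}=\delta_{2g}(\Z_\ell)$ (using that $\Sp_{2g}(\Z_\ell)$ is perfect for $\ell\ge 5$). Second, your final ``gluing'' asserts $G^{\sharp}\supseteq U\times\prod_{\ell>\ell_1}\delta_{2g}(\Z_\ell)$ from surjectivity onto the large-prime factor and openness of the small-prime projections; this does not follow formally from those two facts alone. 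One must run the Goursat/Ribet argument \emph{across} the small/large divide as well, using that the open subgroups $G^{\sharp}_\ell\subseteq\delta_{2g}(\Z_\ell)$ for $\ell\le\ell_1$ have only pro-$\ell$ (in particular, $\ell$-group) simple quotients once one passes deep enough, hence share no nonabelian simple quotient with any $\PSp_{2g}(\F_{\ell'})$ for $\ell'>\ell_1$. With those two clarifications your plan is complete and matches the argument the paper is invoking.
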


\subsection{Galois representations of products of abelian varieties} \label{S:GalRepProd} 
Let $K$ be a number field. For $n\geq 1$, let $A$ be the product of principally polarized abelian varieties $A_1/K, \ldots, A_n/K$ of dimensions $g_1, \ldots, g_n$, respectively. Let $T(A) \coloneqq \varprojlim A[n]$ be the {adelic Tate module} of $A$,
where $A[n]$ denotes the $n$-torsion subgroup of $A(\overline{K})$ and the inverse limit is taken over the integers ordered by divisibility.  Similarly, let $T(A_i)$ be the adelic Tate module of $A_i$. We have that 
\[
T(A) \cong T(A_1) \oplus \cdots \oplus T(A_n) \cong \bigoplus_{i=1}^n \widehat{\Z}^{2g_i}.
 \]
The Galois group $\Gal(\overline{K}/K)$ acts on $T(A)$, giving rise to the {adelic Galois representation} of $A$,
\[ \rho_{A} \colon \Gal(\overline{K}/K) \to \Aut(T(A)). \]

Upon fixing an appropriate $\widehat{\Z}$-basis for $T(A)$ and noting that $A_i$ is principally polarized for each $1\leq i\leq n$, we may consider $\rho_A$ as a map from $\Gal(\overline{K}/K)$ to $ \prod_{i=1}^n \GSp_{2g_i}(\widehat{\Z})$. 
As before, the existence of the Weil pairing of $A$ implies that the composition $\mult \circ \rho_{A_i}$ is the cyclotomic character $\Gal(\overline{K}/K) \to \widehat{\Z}^\times$ for each $i$, which further restricts the image of $\rho_A$. 
As a consequence, 
\[ \mult \circ \rho_{A_1} = \cdots = \mult \circ \rho_{A_n}, \]
which gives the inclusion of the image  of $\rho_A$  in $\Delta_{g_1, \ldots, g_n}(\widehat{\Z})$.
Thus,  in this paper we always take $\Delta_{g_1, \ldots, g_n}(\widehat{\Z})$ to be the codomain of $\rho_A$. This is to say that we always consider $\rho_A$ as a map
\[ \rho_{A} \colon \Gal(\overline{K}/K) \to \Delta_{g_1, \ldots, g_n}(\widehat{\Z}). \]

Similarly, for each prime $\ell$, we consider the $\ell$-adic and mod $\ell$ Galois representations of $A$
\begin{align*}
    \rho_{A,\ell}&\colon \Gal(\overline{K}/K) \to \Delta_{g_1, \ldots, g_n}(\Z_\ell), \\
    \bar{\rho}_{A,\ell}&\colon \Gal(\overline{K}/K) \to \Delta_{g_1, \ldots, g_n}(\F_\ell),
\end{align*}
 which encode the action of $\Gal(\overline{K}/K)$ on the $\ell$-adic Tate module $T_\ell(A)$ and $\ell$-torsion $A[\ell]$, respectively. The maps $\rho_{A,\ell}$ and $\bar{\rho}_{A,\ell}$ can be viewed as the composition of the adelic Galois representation with the natural projection map \[
\Delta_{g_1, \ldots, g_n}(\widehat{\Z}) \twoheadrightarrow \Delta_{g_1, \ldots, g_n}(\Z_\ell) \; \text{ and } \; \Delta_{g_1, \ldots, g_n}(\widehat{\Z}) \twoheadrightarrow \Delta_{g_1, \ldots, g_n}(\F_\ell),
\]
respectively. 

We use the notation $G_A$, $G_{A,\ell}$, and $G_A(\ell)$ to denote the images of $\rho_A$, $\rho_{A,\ell}$, and $\bar{\rho}_{A,\ell}$, respectively. In this notation, $\pr_i(G_A) = G_{A_i}$, and similarly we have  $\pr_i(G_{A,\ell}) = G_{A_i,\ell}$ and $\pr_i(G_A(\ell)) = G_{A_i}(\ell)$.

For now, we specialize to $n = 1$, i.e., suppose that $A$ is a principally polarized abelian variety defined over $K$ of dimension $g$. We recall some arithmetic invariants associated with $A$.  Let $N_A$ be the norm of the conductor ideal of $A$. For any prime $\fp\nmid N_A$, we fix an absolute Frobenius automorphism $\Frob_\fp\in \Gal(\overline{K}/K)$. If $\ell$ is any prime number such that $\fp \nmid \ell$, then the matrix $\rho_{A,\ell}(\Frob_\fp) \in \GSp_{2g}({\Z_\ell})$ has an integral characteristic polynomial
\[
P_{A, \fp}(X) =X^{2g}-a_\fp(A)X^{2g-1}+\ldots + N(\fp)^g \in \Z[X],
\]
which is independent of $\ell$. The constant $a_\fp(A)$ is the \emph{Frobenius trace} of $A$ at $\fp$ and $N(\fp)$  is the norm of the ideal $\fp$. Recall that $a_\fp(A)$ is an integer that satisfies Weil's bound (see, for instance, \cite[Theorem 19.1, p. 143]{CornellSilverman}), 
\begin{equation}\label{weil-bound}
 |a_\fp(A)|\leq 2g\sqrt{N(\fp)}.
 \end{equation}

\subsection{Effective Chebotarev density theorem} The proof of our main result relies on an explicit conditional Chebotarev density theorem due to Bach and Sorenson, which we now recall. Let $K/\Q$ be a number field that is Galois  over $\Q$. Write $d_K$  to denote the absolute value of the discriminant of $K/\Q$. We denote by $\Sigma_K$ the set of all finite places of $K$. For each $\fp\in \Sigma_K$, we write $N(\fp)$ for the norm of the prime ideal $\fp$. 
Now let $L/K$ be a Galois  extension of number fields. We  denote
 by  $[L:K]$ the degree of $L/K$
 and
 by $\disc{L/K}$ the norm of the relevant discriminant ideal of $L/K$. We also denote 
$$
 P(L/K) \coloneqq \{p \ \text{rational prime}: p \text{ divides } \disc{L/K}\}.$$
 
For a prime ideal $\fp$ of $K$ that is unramified in $L/K$, let $\p{\tfrac{\fp}{L/K}}$ denote the Artin symbol of $L/K$ at $\fp$. We may view $\p{\tfrac{\fp}{L/K}}$ as the restriction of an absolute Frobenius element $\Frob_\fp$ to $\Gal(L/K)$. Let $C \subseteq \Gal(L/K)$ be a nonempty subset that is closed under conjugation. We consider the relation
\begin{equation} \label{artin-C}
    \p{\frac{\fp}{L/K}} \subseteq C.
\end{equation}

The theorem below is an effective version of the Chebotarev density theorem that, conditional on the Generalized Riemann Hypothesis for Dedekind zeta functions (GRH), bounds the least norm among unramified primes $\fp$ for which \eqref{artin-C} holds.

\begin{theorem}[Bach-Sorenson] \label{effective-cdt}
Assume GRH. Let notation be as above. Then there exists a prime $\mathfrak{p}$ of $K$ that is unramified in $L/K$ for which (\ref{artin-C}) holds and that satisfies the inequality
\[ N(\mathfrak{p}) \leq  (a \log d_L + b [L:K] + c)^2, \]
where $a,b,$ and $c$ are absolute constants that can be taken to be $4, 2.5,$ and $5$, respectively, or can be taken to be the improved values given in \cite[Table 1]{MR1355006} associated with $L$.
\end{theorem}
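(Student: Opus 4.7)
The plan is to produce a nonempty conjugation-closed subset $C$ of the Galois group of a suitable finite extension $L/K$ such that any prime $\fp$ of $K$ with $\Frob_\fp \in C$ satisfies $a_\fp(A_1) \neq \pm a_\fp(A_2)$, and then to invoke the effective Chebotarev density theorem of Bach--Sorenson (Corollary \ref{ECDT-cor}) to bound $N(\fp)$.

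Concretely, take $L \coloneqq K(A_1[\ell_0], A_2[\ell_0])$, the compositum of the $\ell_0$-torsion fields of $A_1$ and $A_2$. The injection $\Gal(L/K) \hookrightarrow \Delta_{2g,2}(\F_{\ell_0})$ induced by $\bar\rho_{A_1 \times A_2, \ell_0}$ gives
\[
[L:K] \leq |\Delta_{2g,2}(\F_{\ell_0})| \leq (\ell_0^{8g^2}-1)^2,
\]
using a routine estimate for $|\GSp_{2g}(\F_{\ell_0})|$. By the N\'eron--Ogg--Shafarevich criterion, $L/K$ is unramified outside primes dividing $\ell_0 N_{A_1} N_{A_2}$, and the conductor-discriminant formula together with standard bounds on the local discriminants of torsion fields of abelian varieties yields a bound of the shape
\[
\log d_L \leq [L:K]\bigl(\log d_K + [K:\Q]\log\rad(2\ell_0 N_{A_1}N_{A_2} d_K)\bigr) + [L:\Q]\log[L:K],
\]
which matches the structure of the target estimate. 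Now set
\[
C \coloneqq \set{(\gamma_1, \gamma_2) \in \Gal(L/K) : \tr\gamma_1 \not\equiv \pm \tr\gamma_2 \pmod{\ell_0}},
\]
which is closed under conjugation; any good prime $\fp \nmid \ell_0$ with $\Frob_\fp \in C$ satisfies $a_\fp(A_1) \not\equiv \pm a_\fp(A_2) \pmod{\ell_0}$ and therefore $a_\fp(A_1) \neq \pm a_\fp(A_2)$ in $\Z$.

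The heart of the matter is verifying $C \neq \emptyset$. Argue by contradiction: if $C = \emptyset$, then $\tr\rho_{A_1,\ell_0}(\sigma) \equiv \pm \tr\rho_{A_2,\ell_0}(\sigma) \pmod{\ell_0}$ for every $\sigma \in \Gal(\overline K/K)$. Using the openness of each $G_{A_i,\ell_0}$ in $\GSp_{2g}(\Z_{\ell_0})$ to propagate the congruence to an $\ell_0$-adic identity, and the structural rigidity of $\GSp_{2g}$ afforded by Lemma \ref{L:GSpProperties} (so that the only way the trace functions can coincide up to a global sign is via a twist), one produces a quadratic character $\chi : \Gal(\overline K/K) \to \set{\pm 1}$ with $\rho_{A_1,\ell_0} \cong \rho_{A_2,\ell_0} \otimes \chi$. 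Faltings's isogeny theorem then exhibits a $K$-isogeny from $A_1$ to the quadratic twist $A_2^{\chi}$, and since $A_2^{\chi}$ becomes isomorphic to $A_2$ over the quadratic extension of $K$ cut out by $\chi$, this forces $A_1$ to be $\overline K$-isogenous to $A_2$, contradicting the hypothesis. Carrying out this lifting-and-rigidity argument in the small-$\ell_0$ regime is the main obstacle, and is precisely where the openness hypothesis on each $G_{A_i}$ plays its decisive role.

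Once $C \neq \emptyset$ is secured, apply Corollary \ref{ECDT-cor} to the pair $(L/K, C)$: this produces an unramified prime $\fp$ with $\Frob_\fp \in C$ (automatically of good reduction for $A_1$ and $A_2$) satisfying $N(\fp) \leq (\tilde a \log d_L + \tilde b [L:\Q] + \tilde c)^2$. Substituting $[L:K] \leq (\ell_0^{8g^2}-1)^2$ and the estimate for $\log d_L$ above, then carrying out the bookkeeping, recovers the claimed inequality.
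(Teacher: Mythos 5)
Your proposal does not prove the statement in question. The statement here is Theorem \ref{effective-cdt} itself: the GRH-conditional effective Chebotarev density theorem of Bach--Sorenson, asserting that for an \emph{arbitrary} Galois extension $L/K$ and an \emph{arbitrary} nonempty conjugation-closed subset $C \subseteq \Gal(L/K)$, there is an unramified prime $\fp$ with $\p{\tfrac{\fp}{L/K}} \subseteq C$ and $N(\fp) \leq (a \log d_L + b[L:K] + c)^2$. This is a purely analytic result; its proof proceeds via the Lagarias--Odlyzko explicit formula and GRH-conditional estimates on sums over zeros of the relevant $L$-functions, with the numerical constants coming from the refinements of Bach and Sorenson, and the paper accordingly just cites \cite[Theorem 5.1]{MR1355006}. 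Nothing in your argument establishes a bound of this shape for a general $(L/K, C)$.

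What you have written is instead a sketch of Theorem \ref{effective-Faltings} (the effective isogeny statement for a pair of abelian varieties), and as a proof of Theorem \ref{effective-cdt} it is circular: at the final step you ``apply Corollary \ref{ECDT-cor}'', which is itself a direct consequence of the very theorem you are supposed to prove. So the key analytic input --- the zero-density/explicit-formula argument bounding the least prime in a Chebotarev class in terms of $\log d_L$ and $[L:K]$ --- is entirely missing and is replaced by an appeal to the conclusion. (Separately, even read as a proof of Theorem \ref{effective-Faltings}, your nonemptiness argument for $C$ is weaker than the paper's: non-isogeny only gives $a_\fp(A_1) \neq \pm a_\fp(A_2)$ in $\Z$, which need not persist modulo the small prime $\ell_0$, which is exactly why the paper works with the $\ell_0$-adic representation and the finite quotient produced by Proposition \ref{lemma-Serre-isogeny} rather than with the mod-$\ell_0$ torsion field; your ``lifting-and-rigidity'' step is asserted, not carried out.) To address the statement as posed you would either need to reproduce the Bach--Sorenson analytic argument or, as the paper does, cite it.
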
 
\begin{proof} See \cite[Theorem 5.1]{MR1355006}. \end{proof}

In fact, \cite[Theorem 5.1]{MR1355006}  guarantees the existence of a prime $\fp$ of residue degree 1 with the claimed properties. We will not use this information about the residue degree here. Instead, in our application, we will use a corollary of Theorem \ref{effective-cdt} that allows for the avoidance of a prescribed set of primes.

\begin{corollary}\label{ECDT-cor} Assume GRH. Let notation be as above, let $m$ be a positive squarefree integer, and set $\tilde{L} \coloneqq L(\sqrt{m})$. Then there exists a prime $\fp$ of $K$ that is unramified in $L/K$ and does not divide $m$ for which (\ref{artin-C}) holds and that satisfies  the inequality
\begin{equation*} \label{ECDT-cor-ineq}
    N(\fp) \leq  (\tilde{a} \log d_{\tilde{L}} + \tilde{b} [\tilde{L}:K] + \tilde{c})^2,
\end{equation*} 
where  $\tilde{a},\tilde{b},\tilde{c}$ are absolute constants that can be taken to be $4, 2.5,$ and $5$, respectively, or can be taken to be the improved values given in \cite[Table 1]{MR1355006}  associated with $\tilde{L}$.
\end{corollary}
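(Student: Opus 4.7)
The plan is to reduce Corollary \ref{ECDT-cor} to Theorem \ref{effective-cdt} by working with the enlarged Galois extension $\tilde L/K = L(\sqrt m)/K$. Since $L \subseteq \tilde L$ and both are Galois over $K$, restriction yields a surjection $\pi\colon \Gal(\tilde L/K)\twoheadrightarrow\Gal(L/K)$. I would set $\tilde C\coloneqq \pi^{-1}(C) \subseteq \Gal(\tilde L/K)$; this set is nonempty because $C$ is, it is closed under conjugation because $C$ is and $\pi$ is a surjective group homomorphism, and it satisfies $\pi(\tilde C)=C$.

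Applying Theorem \ref{effective-cdt} to the pair $(\tilde L/K, \tilde C)$ furnishes a prime $\fp$ of $K$ that is unramified in $\tilde L/K$, whose Artin symbol in $\Gal(\tilde L/K)$ is contained in $\tilde C$, and that satisfies
\[ N(\fp) \leq (\tilde a \log d_{\tilde L} + \tilde b [\tilde L:K] + \tilde c)^2, \]
which is the advertised inequality. Since $L\subseteq\tilde L$, the prime $\fp$ is automatically unramified in $L/K$, and from the compatibility $\pi\bigl((\fp/(\tilde L/K))\bigr)=(\fp/(L/K))$ together with the definition of $\tilde C$, we obtain $(\fp/(L/K))\subseteq C$, verifying \eqref{artin-C}.

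For the remaining condition $\fp\nmid m$, I would invoke the standard fact that the quadratic subextension $K(\sqrt m)/K$ is ramified at each prime of $K$ dividing $m$: this follows from the description of the discriminant of a Kummer-type quadratic extension. Since $\fp$ is unramified in the ambient extension $\tilde L/K$, it must also be unramified in the subextension $K(\sqrt m)/K$, and therefore $\fp$ cannot divide $m$. The entire argument is a short bookkeeping embellishment of Bach--Sorenson; the only nontrivial point is that passing from $L$ to $\tilde L$ encodes the primes dividing $m$ into the ramification of $\tilde L/K$, and no substantial obstacle arises.
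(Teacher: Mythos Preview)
Your overall strategy—pull $C$ back along the restriction $\pi\colon\Gal(\tilde L/K)\twoheadrightarrow\Gal(L/K)$, apply Theorem~\ref{effective-cdt} to the pair $(\tilde L/K,\pi^{-1}(C))$, and then restrict—is the approach the paper intends (it simply cites \cite[Corollary~6]{mayle2021effective}, which proceeds in this way). The deductions that $\fp$ is unramified in $L/K$ and that \eqref{artin-C} holds are fine.

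The gap is in your justification of $\fp\nmid m$. The assertion that $K(\sqrt m)/K$ is ramified at \emph{every} prime $\fp$ of $K$ dividing $m$ is false as stated. Away from $2$, ramification of $\fp$ in $K(\sqrt m)/K$ is equivalent to $v_\fp(m)$ being odd, and $v_\fp(m)=e(\fp/p)\,v_p(m)$ can certainly be even: take $m$ a perfect square (e.g.\ $m=4$, so $K(\sqrt m)=K$ and nothing ramifies), or take $m$ squarefree but $e(\fp/p)$ even (e.g.\ $K=\Q(\sqrt5)$, $m=5$, where again $K(\sqrt m)=K$). In either situation the prime handed to you by Bach--Sorenson could divide $m$, and your argument does not exclude this.

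A repair is possible but needs more input than you have used. One route: replace $m$ by $\rad(m)$ (harmless since $\fp\nmid m\iff\fp\nmid\rad(m)$), and then use the sharper information from \cite{MR1355006}, mentioned just before the corollary, that the prime produced is of residue degree one and lies over a rational prime unramified in $\tilde L$, which forces $e(\fp/p)=1$; then $v_\fp(m)=1$ is odd and the ramification argument goes through (with the usual separate check at primes above $2$). Without some such additional ingredient the final step does not close.
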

\begin{proof}
Follow from the proof of  \cite[Corollary 6, p. 4]{mayle2021effective}.
\end{proof}
In order to apply Corollary \ref{ECDT-cor}, we will need a bound on $\log d_L$. The next lemma provides such a bound in terms of the degrees $[L:K]$, $[L:\Q]$, and $[K:\Q]$ and the discriminants $d_K$ and $\disc{L/K}$.

\begin{lemma} \label{lem-logdK} If $L/K$ is a finite Galois extension of number fields, then
\begin{align*}
\log d_L  \leq & [L : K]\log d_K +  
([L : \Q] - [K : \Q])
\log \rad(\disc{L/K})+ [L : \Q]  \log [L : K],
\end{align*} 
 where $\rad n \coloneqq \prod_{p \mid n} p$ is the radical of an integer $n$. 
\end{lemma}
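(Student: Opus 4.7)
The plan is to start from the tower formula for discriminants $d_L = d_K^{[L:K]} \cdot \disc{L/K}$ (where $\disc{L/K}$ is the absolute norm of the relative discriminant ideal $\mathfrak{d}_{L/K}$). Taking logarithms reduces the stated inequality to
\[
\log \disc{L/K} \leq ([L:\Q] - [K:\Q]) \log \rad(\disc{L/K}) + [L:\Q] \log [L:K].
\]
Writing $\log \rad(\disc{L/K}) = \sum_{p \mid \disc{L/K}} \log p$ and $\log [L:K] = \sum_p v_p([L:K]) \log p$, it suffices to prove the pointwise estimate
\[
v_p(\disc{L/K}) \leq ([L:\Q] - [K:\Q]) + [L:\Q] \cdot v_p([L:K])
\]
for every rational prime $p$ dividing $\disc{L/K}$, and then multiply by $\log p$ and sum.

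To establish this pointwise bound, I would begin from the identity $v_p(\disc{L/K}) = \sum_{\mathfrak{P} \mid p} f_{\mathfrak{P}/p} \cdot d(\mathfrak{P}/\mathfrak{p})$, where $\mathfrak{p}$ is the prime of $K$ below $\mathfrak{P}$ and $d(\mathfrak{P}/\mathfrak{p})$ is the different exponent, and then apply Serre's standard local bound $d(\mathfrak{P}/\mathfrak{p}) \leq e_{\mathfrak{P}/\mathfrak{p}} - 1 + v_{\mathfrak{P}}(e_{\mathfrak{P}/\mathfrak{p}})$ from the theory of local fields. This splits the contribution at $p$ into a tame piece $\sum_{\mathfrak{P} \mid p} f_{\mathfrak{P}/p}(e_{\mathfrak{P}/\mathfrak{p}} - 1)$ and a wild piece $\sum_{\mathfrak{P} \mid p} f_{\mathfrak{P}/p} v_{\mathfrak{P}}(e_{\mathfrak{P}/\mathfrak{p}})$. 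The wild piece is the easier of the two: using $v_{\mathfrak{P}}(e_{\mathfrak{P}/\mathfrak{p}}) = e_{\mathfrak{P}/p} \cdot v_p(e_{\mathfrak{P}/\mathfrak{p}}) \leq e_{\mathfrak{P}/p} v_p([L:K])$ together with $\sum_{\mathfrak{P} \mid p} f_{\mathfrak{P}/p} e_{\mathfrak{P}/p} = [L:\Q]$, it is at most $[L:\Q] v_p([L:K])$.

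For the tame piece I would exploit the Galois-case multiplicativities $e_{\mathfrak{P}/p} = e_{\mathfrak{P}/\mathfrak{p}} e_{\mathfrak{p}/p}$, $f_{\mathfrak{P}/p} = f_{\mathfrak{P}/\mathfrak{p}} f_{\mathfrak{p}/p}$, together with $e_{\mathfrak{P}/\mathfrak{p}} f_{\mathfrak{P}/\mathfrak{p}} g_{\mathfrak{p}} = [L:K]$, to rewrite
\[
\sum_{\mathfrak{P} \mid p} f_{\mathfrak{P}/p}(e_{\mathfrak{P}/\mathfrak{p}} - 1) \;=\; [L:K] \sum_{\mathfrak{p} \mid p} f_{\mathfrak{p}/p}\Bigl(1 - \frac{1}{e_{\mathfrak{P}/\mathfrak{p}}}\Bigr).
\]
Using $1 - 1/e_{\mathfrak{P}/\mathfrak{p}} \leq 1 - 1/[L:K]$ (since $e_{\mathfrak{P}/\mathfrak{p}} \mid [L:K]$) and $\sum_{\mathfrak{p} \mid p} f_{\mathfrak{p}/p} \leq [K:\Q]$ (which follows from $\sum_{\mathfrak{p}\mid p} e_{\mathfrak{p}/p} f_{\mathfrak{p}/p} = [K:\Q]$), this is bounded by $[L:K](1 - 1/[L:K])[K:\Q] = [L:\Q] - [K:\Q]$.

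The main obstacle will be this tame estimate: the naive bound $1 - 1/e_{\mathfrak{P}/\mathfrak{p}} \leq 1$ only yields $[L:\Q]$ and loses the decisive $-[K:\Q]$ correction, so the argument rests on invoking the sharper inequality $1 - 1/e_{\mathfrak{P}/\mathfrak{p}} \leq 1 - 1/[L:K]$ at precisely the right step. Once the pointwise bound is in hand, summing against $\log p$ over ramified primes (and noting that the wild terms combine via $\sum_p v_p([L:K]) \log p = \log [L:K]$) yields the lemma.
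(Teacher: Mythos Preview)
Your argument is correct. The paper's own proof is simply a citation of Serre's bound \cite[Proposition~4', p.~329]{MR644559} together with the identification $\sum_{p\in P(L/K)}\log p=\log\rad(\disc{L/K})$; what you have written is precisely the standard derivation of that proposition (tower formula, then the local different bound $d(\mathfrak{P}/\mathfrak{p})\le e_{\mathfrak{P}/\mathfrak{p}}-1+v_{\mathfrak{P}}(e_{\mathfrak{P}/\mathfrak{p}})$, then summing the tame and wild contributions separately using the Galois identity $efg=[L:K]$). So your route and the paper's are the same, only you have unpacked the cited reference rather than invoking it.
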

\begin{proof} The claim follows from the inequality given in \cite[Proposition 4', p. 329]{MR644559} (and the remark that immediately follows), and the observation that  $\sum_{p \in P(L/K)} \log p= \log \rad(\disc{L/K})$.
\end{proof}

\section{Group theoretic considerations}\label{S3}

\subsection{Automorphisms of the general symplectic group}

Throughout this section, we make the assumption that $\ell \geq 5$ to avoid certain nuances with $\ell = 2,3$. For a fixed integer $g\geq 1$, we write $I$ to denote the identity matrix $I_{2g}$. 

To begin with, we provide an explicit description of the automorphism group of $\GSp_{2g}(\F_\ell)$. This description is essential in characterizing the automorphisms that preserve the multiplier, which will play a key role in our proof of Proposition \ref{prop-class}.  Later in the section, we perform a similar analysis for $\GSp_{2g}(\F_\ell) / \set{\pm I}$. 

\subsubsection{Automorphisms of $\GSp_{2g}(\F_\ell)$} \label{sec-Aut-GSp}

Write  $\Inn(\GSp_{2g}(\F_\ell))$ to denote the inner automorphism group of $\GSp_{2g}(\F_\ell)$. For $\gamma,\gamma' \in \GSp_{2g}(\F_\ell)$, note that by Lemma \ref{L:GSpProperties}(\ref{L:centers}), $\gamma$ and $\gamma'$ give rise to the same inner automorphism of $\GSp_{2g}(\F_\ell)$ if and only if the images of $\gamma$ and $\gamma'$ in $\PGSp_{2g}(\F_\ell)$ coincide. Hence
\begin{equation} \label{Inn-PGSp}
     \Inn(\GSp_{2g}(\F_\ell)) \cong \PGSp_{2g}(\F_\ell).
\end{equation}

In addition to inner automorphisms, $\GSp_{2g}(\F_\ell)$ also has radial automorphisms, which we now define in this setting (cf.\ \cite{MR567969}). Let $k\geq 0$ be an integer such that $\gcd(2k+1,\ell-1) = 1$. Then the map
\begin{align*}
\chi_k\colon \GSp_{2g}(\F_\ell) &\to \GSp_{2g}(\F_\ell) \\
\gamma &\mapsto (\mult (\gamma))^k \gamma
\end{align*}
is a \emph{radial automorphism} of $\GSp_{2g}(\F_\ell)$. The set of all radial automorphisms of $\GSp_{2g}(\F_\ell)$ forms a group, which we denote by $\Rad(\GSp_{2g}(\F_\ell))$. It is straightforward to check that the order of $\Rad(\GSp_{2g}(\F_\ell))$ is given by 
\begin{equation} \label{ord-rad}  \abs{\Rad(\GSp_{2g}(\F_\ell))} = | \set{k \in \Z: 0 \leq k < \ell - 1 \text{ and } \gcd(2k+1,\ell-1) = 1} | = 2 \phi(\ell-1)
\end{equation}
where $\phi$ denotes Euler's totient function. The next lemma shows that the inner and radial automorphisms of $\GSp_{2g}(\F_\ell)$ give rise to all automorphisms of $\GSp_{2g}(\F_\ell)$.

\begin{lemma}\label{Aut-GSp} The automorphism group of $\GSp_{2g}(\F_\ell)$ is the internal direct product
\[ \Aut(\GSp_{2g}(\F_\ell)) = \Inn(\GSp_{2g}(\F_\ell)) \times \Rad(\GSp_{2g}(\F_\ell)). \]
\end{lemma}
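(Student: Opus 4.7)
My strategy is to apply Proposition~\ref{aut-semi} to the semidirect decomposition $\GSp_{2g}(\F_\ell) = \Sp_{2g}(\F_\ell) \rtimes \iota(\F_\ell^\times)$ of \eqref{GSp-semi}---which is admissible since $\Sp_{2g}(\F_\ell)$ is characteristic in $\GSp_{2g}(\F_\ell)$ by Lemma~\ref{L:GSpProperties}\eqref{L:GSpCharSub}---to obtain an upper bound on $|\Aut(\GSp_{2g}(\F_\ell))|$. I will then show that the subgroup generated by $\Inn(\GSp_{2g}(\F_\ell))$ and $\Rad(\GSp_{2g}(\F_\ell))$ is an internal direct product of exactly that order, forcing equality.

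\textbf{Upper bound via Proposition~\ref{aut-semi}.} The proposition produces a homomorphism
\[
\Aut(\GSp_{2g}(\F_\ell)) \longrightarrow \Aut(\Sp_{2g}(\F_\ell)) \times \Aut(\iota(\F_\ell^\times))
\]
whose kernel is $Z^{1}(\iota(\F_\ell^\times), Z(\Sp_{2g}(\F_\ell)))$. Since $Z(\Sp_{2g}(\F_\ell)) = \set{\pm I}$ lies in the center $R_{2g}(\F_\ell)$ of $\GSp_{2g}(\F_\ell)$ by Lemma~\ref{L:GSpProperties}\eqref{L:centers}, the conjugation action of $\iota(\F_\ell^\times)$ on $Z(\Sp_{2g}(\F_\ell))$ is trivial, so crossed homomorphisms are just group homomorphisms $\F_\ell^\times \to \set{\pm I}$. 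There are exactly $2$ such homomorphisms (the trivial one and the unique character of order $2$, using that $\ell-1$ is even). Combining this with $|\Aut(\Sp_{2g}(\F_\ell))| = |\PGSp_{2g}(\F_\ell)|$ from Lemma~\ref{L:GSpProperties}\eqref{Aut-Sp} and $|\Aut(\iota(\F_\ell^\times))| = \phi(\ell-1)$ yields
\[
|\Aut(\GSp_{2g}(\F_\ell))| \leq 2\phi(\ell-1) \cdot |\PGSp_{2g}(\F_\ell)|.
\]

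\textbf{Matching lower bound from $\Inn \cdot \Rad$.} By \eqref{Inn-PGSp} and \eqref{ord-rad}, $|\Inn(\GSp_{2g}(\F_\ell))| \cdot |\Rad(\GSp_{2g}(\F_\ell))| = 2\phi(\ell-1) \cdot |\PGSp_{2g}(\F_\ell)|$, which equals the upper bound. It remains to verify that $\Inn \cap \Rad = \set{\id}$ and that $\Rad$ centralizes $\Inn$; together with the normality of $\Inn$ in $\Aut$, these imply $\Inn \cdot \Rad \cong \Inn \times \Rad$ as an internal direct product. For commutation: since $\mult$ is a homomorphism invariant under conjugation, both $\chi_k \circ \iota_\alpha$ and $\iota_\alpha \circ \chi_k$ send $\gamma$ to $\mult(\gamma)^k\, \alpha\gamma\alpha^{-1}$. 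For trivial intersection: if $\chi_k = \iota_\alpha$, then evaluating on $\gamma \in \Sp_{2g}(\F_\ell)$ (where $\mult(\gamma) = 1$) shows $\alpha$ centralizes $\Sp_{2g}(\F_\ell)$, which by irreducibility of the standard representation (and Schur's lemma) forces $\alpha \in R_{2g}(\F_\ell)$; then $\iota_\alpha = \id$, and applying $\chi_k = \id$ to an element of multiplier a generator of $\F_\ell^\times$ gives $k \equiv 0 \pmod{\ell-1}$.

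\textbf{Expected obstacle.} The bookkeeping is routine, but the most delicate point is correctly identifying the kernel $Z^1(\iota(\F_\ell^\times), Z(\Sp_{2g}(\F_\ell)))$---in particular, recognizing that its nontrivial element accounts for the radial automorphism $\chi_{(\ell-1)/2}$, which is exactly the phenomenon responsible for the factor of $2$ in $|\Rad(\GSp_{2g}(\F_\ell))| = 2\phi(\ell-1)$. Once the two order counts agree, the decomposition $\Aut(\GSp_{2g}(\F_\ell)) = \Inn(\GSp_{2g}(\F_\ell)) \times \Rad(\GSp_{2g}(\F_\ell))$ follows automatically.
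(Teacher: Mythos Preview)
Your proposal is correct and follows essentially the same route as the paper: both arguments verify that $\Inn$ and $\Rad$ commute and intersect trivially, and then use Proposition~\ref{aut-semi} applied to the semidirect product \eqref{GSp-semi} (with $|Z^1(\F_\ell^\times,\{\pm I\})|=2$ and Lemma~\ref{L:GSpProperties}\eqref{Aut-Sp}) to bound $|\Aut(\GSp_{2g}(\F_\ell))|$ from above by $2\phi(\ell-1)\,|\PGSp_{2g}(\F_\ell)|$, which matches $|\Inn|\cdot|\Rad|$ via \eqref{Inn-PGSp} and \eqref{ord-rad}. The only cosmetic differences are the order of exposition and your use of Schur's lemma (irreducibility of the standard $\Sp_{2g}(\F_\ell)$-module) in place of the paper's appeal to Lemma~\ref{L:GSpProperties}\eqref{L:centers} for the trivial-intersection step.
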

\begin{proof} Radial automorphisms  of $\GSp_{2g}(\F_\ell)$ act trivially on $\Sp_{2g}(\F_\ell)$. Moreover, by Lemma \ref{L:GSpProperties}(\ref{L:centers}), the only inner automorphism of $\GSp_{2g}(\F_\ell)$ that acts trivially on $\Sp_{2g}(\F_\ell)$ is the identity. Thus
\[ \Inn(\GSp_{2g}(\F_\ell)) \cap \Rad(\GSp_{2g}(\F_\ell)) = \set{\id}. \]
Furthermore, we note that inner and radial automorphisms of $\GSp_{2g}(\F_\ell)$ commute. Indeed, for any $\chi_k \in \Rad(\GSp_{2g}(\F_\ell))$ and $\beta,\gamma \in \GSp_{2g}(\F_\ell)$, we have
\[
\chi_k(\beta \gamma \beta^{-1})
= \mult(\beta \gamma \beta^{-1})^k \beta \gamma \beta^{-1}
= \beta \left(\mult(\gamma)^k\right) \gamma \beta^{-1}
= \beta \chi_k(\gamma) \beta^{-1}.
\]
To establish the claimed result, it remains only to show that the inclusion
\begin{equation} \label{aut-inc} \Rad(\GSp_{2g}(\F_\ell)) \times  \Inn(\GSp_{2g}(\F_\ell)) \subseteq \Aut(\GSp_{2g}(\F_\ell)) \end{equation}
is an equality. 

We proceed by considering orders. By  (\ref{GSp-semi}) and Proposition \ref{aut-semi}, there exists a homomorphism
\[ \Phi\colon \Aut(\GSp_{2g}(\F_\ell)) \to \Aut(\Sp_{2g}(\F_\ell)) \times \Aut(\iota(\F_\ell^\times)) \]
with kernel
\[ \ker \Phi \cong Z^1(\iota(\F_\ell^\times), Z(\Sp_{2g}(\F_\ell))) \cong Z^1(\F_\ell^\times, \set{\pm I}).\]
Since $\set{\pm I}$ is abelian and $\F_{\ell}^{\times}$ is cyclic of even order, 
\[ Z^1(\F_\ell^\times, \set{\pm I})=\text{Hom}(\F_\ell^\times, \set{\pm I}) \cong \set{\pm I}. \]
Hence $\abs{\ker \Phi} = 2$, and by the first isomorphism theorem and Lemma \ref{L:GSpProperties}(\ref{Aut-Sp}),
\[
\abs{\Aut(\GSp_{2g}(\F_\ell))} 
= 2 \abs{\im \Phi} 
\leq 2 \abs{\Aut(\Sp_{2g}(\F_\ell))} \cdot  \abs{ \Aut(\iota(\F_\ell^\times))}
= 2 \abs{\PGSp_{2g}(\F_\ell)} \phi(\ell-1).
\]
Thus, by (\ref{Inn-PGSp}) and (\ref{ord-rad}), we can conclude that (\ref{aut-inc}) is an equality.
\end{proof}

Next, we examine the subgroup $\Aut(\GSp_{2g}(\F_\ell))$  of multiplier-preserving automorphisms,
\begin{equation} \label{def-aut-mult}
\Aut_{\mult}(\GSp_{2g}(\F_\ell)) \coloneqq  \set{\sigma \in \Aut(\GSp_{2g}(\F_\ell)) : \mult \circ \sigma = \mult }.
\end{equation}
We proceed to provide an explicit description of this group  and the action of its elements on the characteristic polynomials $P_\gamma(t) \coloneqq \det\left(tI-\gamma\right)$ for each $\gamma\in \GSp_{2g}(\F_\ell)$.

\begin{lemma}\label{Aut-mult} \indent
\begin{enumerate}  
    \item \label{Aut-mult-1} The group defined in (\ref{def-aut-mult}) is the internal direct product
\[ \Aut_{\mult}(\GSp_{2g}(\F_\ell)) = \Inn{(\GSp_{2g}(\F_\ell))}\times \{\id, \chi_{\frac{\ell-1}{2}} \}. \] 

\item \label{chrpoly-GSp-mult} If $\sigma \in \Aut_{\mult}(\GSp_{2g}(\F_\ell))$, then for each $\gamma \in \GSp_{2g}(\F_\ell)$,
\[ P_{\sigma(\gamma)}(t) \in \set{P_{\gamma}(t), P_{\gamma}(-t)}. \]
\end{enumerate}
\end{lemma}
\begin{proof}
(1) Since the multiplier is invariant under conjugation by matrices in $\GSp_{2g}(\F_\ell)$,
\[ \Inn(\GSp_{2g}(\F_\ell)) \subseteq  \Aut_{\mult}(\GSp_{2g}(\F_\ell)). \]
Let $k$ be an integer such that $0\leq k<\ell-1$ and $\gcd(2k+1, \ell-1)=1$. Suppose that $\chi_k \in \Rad(\GSp_{2g}(\F_\ell)) \cap \Aut_{\mult}(\GSp_{2g}(\F_\ell))$. Then for each $\gamma \in \GSp_{2g}(\F_\ell)$, 
\[ \mult(\gamma) = \mult( \chi_k(\gamma)) = \mult(\mult(\gamma)^k \gamma) = \mult(\gamma)^{2k+1}.\]
Hence 
for each $\gamma \in \GSp_{2g}(\F_\ell)$,
\begin{equation} \label{mod-2k} \mult(\gamma)^{2k}=1. 
\end{equation}
By the surjectivity of the multiplier map, we may take $\gamma$ so that $\mult(\gamma)$ is a generator in $\F_{\ell}^{\times}$.  
Then (\ref{mod-2k}) implies that $k \in \set{0,\frac{\ell-1}{2}}$. Finally, it is clear that $\sigma \circ \chi_{\frac{\ell-1}{2}} \in  \Aut_{\mult}(\GSp_{2g}(\F_\ell))$ for any $\sigma \in\Inn(\GSp_{2g}(\F_\ell))$. So by Lemma \ref{Aut-GSp}, we get \[ \Aut_{\mult}(\GSp_{2g}(\F_\ell)) = \Inn{(\GSp_{2g}(\F_\ell))}\times \{\id, \chi_{\frac{\ell-1}{2}} \}. \]

\eqref{chrpoly-GSp-mult} Let $\sigma \in \Aut_{\mult}(\GSp_{2g}(\F_\ell))$. From part (\ref{Aut-mult-1}) of this lemma,  $\sigma$ is either $\tau$ or $\tau \circ \chi_{\frac{\ell-1}{2}}$ for some $\tau \in \Inn(\GSp_{2g}(\F_\ell))$. Since the characteristic polynomial of a matrix is preserved under conjugation, we only need to consider the situation when $\sigma = \chi_{\frac{\ell-1}{2}}$. For any $\gamma \in \GSp_{2g}(\F_\ell)$, note that
\[
\chi_{\frac{\ell-1}{2}}(\gamma) = (\mult (\gamma))^{\frac{\ell-1}{2}} \gamma \ \in \set{\gamma, -\gamma}.
\]
In particular, $P_{\chi_{\frac{\ell-1}{2}}(\gamma)}(t) \in \{P_{\gamma}(t), P_{-\gamma}(t)\}$. We conclude the proof by observing that 
\[
P_{-\gamma}(t) = \det(tI+\gamma)=(-1)^{2g}\det(-tI-\gamma)= P_{\gamma}(-t).  \qedhere
\]
\end{proof}

\subsubsection{Automorphisms of $\GSp_{2g}(\F_\ell)/\set{\pm I}$} 

In this section, we perform a similar analysis as in the previous subsection for the quotient group $\GSp_{2g}(\F_\ell)/\set{\pm I}$. We will demonstrate that every automorphism of $\GSp_{2g}(\F_\ell)/\set{\pm I}$ can be obtained as the image of an automorphism of $\GSp_{2g}(\F_\ell)$. 
More precisely, it follows from Lemma \ref{Aut-GSp} that $\set{\pm I}$ is a characteristic subgroup  of $\GSp_{2g}(\F_\ell)$, and we aim to show that the map
\[ \Psi\colon \Aut(\GSp_{2g}(\F_\ell)) \to \Aut(\GSp_{2g}(\F_\ell)/\set{\pm I}). \]
is surjective. To do so, we first consider the kernel of $\Psi$.

\begin{lemma} \label{ker-Psi}
We have that $\ker \Psi = \{\id,\chi_{\frac{\ell-1}{2}}\}$.
\end{lemma}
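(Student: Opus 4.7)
The strategy is to reinterpret the condition $\sigma \in \ker \Psi$ as a statement about a twisting character. Specifically, $\Psi(\sigma)$ is the identity on $\GSp_{2g}(\F_\ell)/\set{\pm I}$ exactly when $\sigma(\gamma) \gamma^{-1} \in \set{\pm I}$ for every $\gamma \in \GSp_{2g}(\F_\ell)$. The forward inclusion is then immediate: the identity clearly lies in $\ker \Psi$, and for $\chi_{\frac{\ell-1}{2}}$ we have $\chi_{\frac{\ell-1}{2}}(\gamma) = \mult(\gamma)^{\frac{\ell-1}{2}} \gamma$, where $\mult(\gamma)^{\frac{\ell-1}{2}} \in \set{\pm 1}$ since $\mult(\gamma) \in \F_\ell^\times$ has order dividing $\ell-1$. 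Hence $\chi_{\frac{\ell-1}{2}} \in \ker \Psi$ as well.

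For the reverse inclusion, I would define $\epsilon : \GSp_{2g}(\F_\ell) \to \set{\pm I}$ by $\epsilon(\gamma) \coloneqq \sigma(\gamma)\gamma^{-1}$. Since $\set{\pm I}$ is central in $\GSp_{2g}(\F_\ell)$ by Lemma \ref{L:GSpProperties}(\ref{L:centers}), a short calculation using that $\sigma$ is a group homomorphism shows $\epsilon(\gamma_1 \gamma_2) = \epsilon(\gamma_1)\epsilon(\gamma_2)$. Identifying $\set{\pm I}$ with $\set{\pm 1}$, the problem then reduces to classifying the group homomorphisms $\GSp_{2g}(\F_\ell) \to \set{\pm 1}$.

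Next, I would factor $\epsilon$ through the abelianization of $\GSp_{2g}(\F_\ell)$. The key input is that $\Sp_{2g}(\F_\ell)$ is perfect for $\ell \geq 5$: its commutator subgroup is normal in $\Sp_{2g}(\F_\ell)$, hence equal to $\set{I_{2g}}$, $\set{\pm I_{2g}}$, or $\Sp_{2g}(\F_\ell)$ by Lemma \ref{L:GSpProperties}(\ref{normal-Sp}); the first two are impossible because $\PSp_{2g}(\F_\ell)$ is nonabelian for $\ell \geq 5$. Combined with the split exact sequence (\ref{GSp-semi}), this gives an isomorphism $\GSp_{2g}(\F_\ell)^{\mathrm{ab}} \cong \F_\ell^\times$ induced by $\mult$, so $\epsilon$ factors as $\chi \circ \mult$ for some character $\chi : \F_\ell^\times \to \set{\pm 1}$.

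Finally, since $\F_\ell^\times$ is cyclic of even order $\ell-1$, there are exactly two such characters: the trivial one, which yields $\sigma = \id$, and the unique quadratic character $a \mapsto a^{\frac{\ell-1}{2}}$, which yields $\sigma(\gamma) = \mult(\gamma)^{\frac{\ell-1}{2}}\gamma = \chi_{\frac{\ell-1}{2}}(\gamma)$. This exhausts $\ker \Psi$. The only delicate point is the perfectness of $\Sp_{2g}(\F_\ell)$; once that is in hand, the rest of the argument is formal.
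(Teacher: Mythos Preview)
Your argument is correct but follows a different line than the paper's. The paper first invokes the decomposition $\Aut(\GSp_{2g}(\F_\ell)) = \Inn(\GSp_{2g}(\F_\ell)) \times \Rad(\GSp_{2g}(\F_\ell))$ of Lemma~\ref{Aut-GSp}: after observing that $\phi\in\ker\Psi$ forces $\phi(\gamma)\in\{\gamma,-\gamma\}$, it reduces (via composition with $\chi_{\frac{\ell-1}{2}}$) to showing that no nontrivial inner automorphism lies in $\ker\Psi$, which follows from the description of the center in Lemma~\ref{L:GSpProperties}\eqref{L:centers}. Your route is more intrinsic: you never appeal to Lemma~\ref{Aut-GSp} but instead package $\sigma(\gamma)\gamma^{-1}$ as a homomorphism $\epsilon:\GSp_{2g}(\F_\ell)\to\{\pm 1\}$, factor it through the abelianization $\GSp_{2g}(\F_\ell)^{\mathrm{ab}}\cong\F_\ell^\times$ (using that $\Sp_{2g}(\F_\ell)$ is perfect, extracted from Lemma~\ref{L:GSpProperties}\eqref{normal-Sp}), and classify the two characters $\F_\ell^\times\to\{\pm 1\}$. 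This buys you independence from the structure theorem for $\Aut(\GSp_{2g}(\F_\ell))$ and gives a proof that would survive in settings where that structure is less accessible; the paper's approach, on the other hand, dovetails with the surrounding narrative of Section~\ref{sec-Aut-GSp} and reuses machinery already in place.
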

\begin{proof} For any automorphism $\phi \in \Aut(\GSp_{2g}(\F_\ell))$,
\begin{equation} \label{quo-ker}
    \phi \in \ker \Psi
    \quad \iff \quad 
    \phi(\gamma) \in \set{\gamma, -\gamma} \text{ for each $\gamma \in \GSp_{2g}(\F_\ell)$}.
\end{equation}
Observe that $\mult(\phi(\gamma))=\mult(\gamma)$ for each $\gamma \in \GSp_{2g}(\F_\ell)$, so $\phi\in \Aut_{\mult}(\GSp_{2g}(\F_\ell))$. Hence 
\[
\ker \Psi \subset \Aut_{\mult}( \GSp_{2g}(\F_\ell)). 
\]
By Lemma \ref{Aut-mult}(\ref{Aut-mult-1}), we see that \[
\Rad(\GSp_{2g}(\F_\ell)) \cap \ker \Psi = \{\id,\chi_{\frac{\ell-1}{2}} \} 
\subseteq \ker \Psi. 
\]
It suffices to establish the reverse inclusion.

Let $\tau_{\gamma_0} \in \Inn(\GSp_{2g}(\F_\ell))$ denote conjugation by a matrix $\gamma_0 \in \GSp_{2g}(\F_\ell)$. Note that
\[ \tau_{\gamma_0} \circ \chi_{\frac{\ell-1}{2}} \in \ker \Psi \quad \Longleftrightarrow \quad \tau_{\gamma_0} = \tau_{\gamma_0} \circ \chi_{\frac{\ell-1}{2}} \circ \chi_{\frac{\ell-1}{2}} \in \ker \Psi.\]
Thus it suffices to show that if $\tau_{\gamma_0} \in \ker \Psi$, then $\tau_{\gamma_0}=\id$.  We denote the image of $\gamma_0$ in $\GSp_{2g}(\F_\ell)/\{\pm I\}$ by $ \bar{\gamma}_0$. From (\ref{quo-ker}), we note that  if $\tau_{\gamma_0} \in \ker \Psi$, then
\[ \bar{\gamma}_0 \bar{\gamma} = \bar{\gamma} \bar{\gamma}_0 \quad \text{for each } \gamma \in \GSp_{2g}(\F_\ell). \]
Hence $\bar{\gamma}_0$ is contained in the center of $\GSp_{2g}(\F_\ell)/\set{\pm I}$. It follows from Lemma \ref{L:GSpProperties}\eqref{L:centers} that the center of $\GSp_{2g}(\F_\ell)/\set{\pm I}$ is $R_{2g}(\F_\ell)/\set{\pm I}$. Hence $\tau_{\gamma_0} = \id$.
\end{proof}

\begin{lemma} \label{surj-aut} The group homomorphism 
\[ \Psi\colon \Aut(\GSp_{2g}(\F_\ell)) \to \Aut(\GSp_{2g}(\F_\ell)/\set{\pm I}) \]
induced by the quotient map $\GSp_{2g}(\F_\ell) \to \GSp_{2g}(\F_\ell)/\set{\pm I}$ is surjective.
\end{lemma}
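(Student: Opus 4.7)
The plan is to prove surjectivity by a counting argument, comparing $\abs{\im \Psi}$ with $\abs{\Aut(\GSp_{2g}(\F_\ell)/\set{\pm I})}$. For the image, Lemma \ref{Aut-GSp} together with \eqref{Inn-PGSp} and \eqref{ord-rad} gives $\abs{\Aut(\GSp_{2g}(\F_\ell))} = \abs{\PGSp_{2g}(\F_\ell)} \cdot 2\phi(\ell-1)$, and Lemma \ref{ker-Psi} yields $\abs{\ker \Psi} = 2$, so the first isomorphism theorem produces $\abs{\im \Psi} = \abs{\PGSp_{2g}(\F_\ell)} \cdot \phi(\ell-1)$.

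For the codomain, I would exploit the semidirect product decomposition $\GSp_{2g}(\F_\ell)/\set{\pm I} \cong \PSp_{2g}(\F_\ell) \rtimes \iota(\F_\ell^\times)$ from Lemma \ref{L:GSpProperties}\eqref{GSppm} and apply Proposition \ref{aut-semi}. This requires that $\PSp_{2g}(\F_\ell)$ be characteristic in $\GSp_{2g}(\F_\ell)/\set{\pm I}$, which I would verify by identifying it with the commutator subgroup: the normal subgroup classification in Lemma \ref{L:GSpProperties}\eqref{normal-Sp} forces $\Sp_{2g}(\F_\ell)$ to be perfect for $\ell \geq 5$, and combined with the abelianness of $\GSp_{2g}/\Sp_{2g} \cong \F_\ell^\times$, this pins down the commutator subgroup of $\GSp_{2g}(\F_\ell)/\set{\pm I}$ as exactly $\PSp_{2g}(\F_\ell)$. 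Since $Z(\PSp_{2g}(\F_\ell))$ is trivial by Lemma \ref{L:GSpProperties}\eqref{L:centers}, the kernel $Z^1(\iota(\F_\ell^\times), Z(\PSp_{2g}(\F_\ell)))$ in Proposition \ref{aut-semi} vanishes, producing an injection
\[ \Aut(\GSp_{2g}(\F_\ell)/\set{\pm I}) \hookrightarrow \Aut(\PSp_{2g}(\F_\ell)) \times \Aut(\iota(\F_\ell^\times)). \]

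To bound each factor, Lemma \ref{L:GSpProperties}\eqref{Aut-Sp} provides a surjection $\PGSp_{2g}(\F_\ell) \cong \Aut(\Sp_{2g}(\F_\ell)) \twoheadrightarrow \Aut(\PSp_{2g}(\F_\ell))$, so $\abs{\Aut(\PSp_{2g}(\F_\ell))} \leq \abs{\PGSp_{2g}(\F_\ell)}$, while cyclicity of $\F_\ell^\times$ gives $\abs{\Aut(\iota(\F_\ell^\times))} = \phi(\ell-1)$. Together, $\abs{\Aut(\GSp_{2g}(\F_\ell)/\set{\pm I})} \leq \abs{\PGSp_{2g}(\F_\ell)} \cdot \phi(\ell-1) = \abs{\im \Psi}$, forcing equality and hence surjectivity. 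The main delicate step is verifying characteristicity of $\PSp_{2g}(\F_\ell)$ in the quotient so that Proposition \ref{aut-semi} applies; the remaining calculations are bookkeeping.
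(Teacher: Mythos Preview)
Your proposal is correct and follows essentially the same counting argument as the paper: both compute $\abs{\im \Psi}$ via Lemma \ref{ker-Psi}, then bound $\abs{\Aut(\GSp_{2g}(\F_\ell)/\set{\pm I})}$ using the semidirect decomposition in Lemma \ref{L:GSpProperties}\eqref{GSppm} together with Proposition \ref{aut-semi} and the triviality of $Z(\PSp_{2g}(\F_\ell))$. The only cosmetic differences are that the paper asserts characteristicity of $\PSp_{2g}(\F_\ell)$ by analogy with Lemma \ref{L:GSpProperties}\eqref{L:GSpCharSub} rather than via the commutator subgroup, and quotes $\Aut(\PSp_{2g}(\F_\ell)) \cong \PGSp_{2g}(\F_\ell)$ directly from Lemma \ref{L:GSpProperties}\eqref{Aut-Sp} rather than deducing the needed inequality from the surjection.
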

\begin{proof} 
By the first isomorphism theorem, (\ref{Inn-PGSp}), (\ref{ord-rad}),  Lemma \ref{Aut-GSp}, and Lemma \ref{ker-Psi}, we  have that
\begin{equation*} \label{image-card}
\abs{\im \Psi} = \frac{1}{2}\abs{\Aut(\GSp_{2g}(\F_\ell))} = \abs{\PGSp_{2g}(\F_\ell)} \phi(\ell-1).
\end{equation*}
It suffices now to determine the order of $\Aut(\GSp_{2g}(\F_\ell)/\set{\pm I})$. By Lemma \ref{L:GSpProperties}\eqref{GSppm}, we have that
\[ \GSp_{2g}(\F_\ell) / \set{\pm I} \cong \PSp_{2g}(\F_\ell) \rtimes \iota(\F_\ell^\times). \]
From Lemma \ref{L:GSpProperties}  \eqref{L:GSpCharSub}, we have that $\PSp_{2g}(\F_\ell)$ is a characteristic subgroup of $\GSp_{2g}(\F_\ell)/\{\pm I\}$. Hence by Proposition \ref{aut-semi}, there exists a homomorphism
\[ \Phi' \colon \Aut(\GSp_{2g}(\F_\ell) / \set{\pm I}) \to \Aut(\PSp_{2g}(\F_\ell)) \times \Aut(\iota(\F_\ell^\times)). \]
Using Lemma \ref{L:GSpProperties}\eqref{L:centers}, we observe that the kernel of $\Phi'$ is
\[
\ker(\Phi') = Z^1(\iota(\F_\ell^\times), Z(\PSp_{2g}(\F_\ell)))\cong \text{Hom}(\F_{\ell}^{\times}, \{\overline{I}\})=\{\id\}.
\]
Thus $\Phi'$ is  injective.  Note that by Lemma \ref{L:GSpProperties}(\ref{Aut-Sp})
\[
\Aut(\PSp_{2g}(\F_\ell)) \cong \PGSp_{2g}(\F_\ell) \quad \text{and} \quad \Aut(\iota(\F_\ell^\times))  \cong (\Z/(\ell-1)\Z)^\times.
\]
Hence, 
\[
\abs{\Aut(\GSp_{2g}(\F_\ell)/\set{\pm I})}\leq \abs{\Aut(\PSp_{2g}(\F_\ell))}\abs{\Aut(\iota(\F_\ell^\times)) }=\abs{\PGSp_{2g}(\F_\ell)} \phi(\ell-1)=\abs{\im \Psi}.
\]
This implies  $\Psi$ is surjective. 
\end{proof}

As before, we consider automorphisms that preserve the multiplier. Observe that for any $\gamma \in \GSp_{2g}(\F_\ell)$, 
\[ \mult(\gamma) = \mult(-\gamma). \]
Thus we may also consider $\mult$ as a well-defined surjective homomorphism
\begin{align*}
\mult\colon \GSp_{2g}(\F_\ell)/\set{\pm I} \twoheadrightarrow \F_\ell^\times.
\end{align*}
With this in mind, we consider the group 
\[
\Aut_{\mult}(\GSp_{2g}(\F_\ell)/\set{\pm I}) \coloneqq  \set{\sigma \in \Aut(\GSp_{2g}(\F_\ell)/\set{\pm I}) :  \mult \circ \sigma = \mult}.
\]

For $\sigma \in \Aut_{\mult}(\GSp_{2g}(\F_\ell)/\set{\pm I})$ and $\gamma 
\in \GSp_{2g}(\F_\ell)/\set{\pm I}$, denote by $\sigma(\gamma)'$ and $\gamma'$  lifts of $ \sigma(\gamma)$ and $\gamma$ in $\GSp_{2g}(\F_\ell)$, respectively. 
Our next result is an analog of Lemma \ref{Aut-mult}\eqref{chrpoly-GSp-mult} in the present setting.

\begin{lemma} \label{chrpoly-mod} If $\sigma \in \Aut_{\mult}(\GSp_{2g}(\F_\ell))/\{\pm I\}$, then for each $(\gamma_1,\gamma_2) \in \Delta_{2g}(\F_\ell)$,
\[
\bar{\gamma}_1 = \sigma(\bar{\gamma}_2)
\quad \implies \quad
P_{\gamma_1}(t) \in \set{P_{\gamma_2}(t), P_{\gamma_2}(-t)}, \]
where $\bar{\gamma}$ denotes the image of $\gamma\in \GSp_{2g}(\F_\ell)$ in $\GSp_{2g}(\F_\ell)/\{\pm I\}$.
\end{lemma}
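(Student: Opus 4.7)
The plan is to reduce the statement to Lemma \ref{Aut-mult}\eqref{chrpoly-GSp-mult} by lifting $\sigma$ to an automorphism of $\GSp_{2g}(\F_\ell)$ that still preserves the multiplier. By Lemma \ref{surj-aut}, the map $\Psi$ is surjective, so there exists $\tilde{\sigma} \in \Aut(\GSp_{2g}(\F_\ell))$ with $\Psi(\tilde{\sigma}) = \sigma$, meaning $\overline{\tilde{\sigma}(\gamma)} = \sigma(\bar{\gamma})$ for every $\gamma \in \GSp_{2g}(\F_\ell)$.

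Next, I would check that $\tilde{\sigma} \in \Aut_{\mult}(\GSp_{2g}(\F_\ell))$. Since $\mult(\gamma) = \mult(-\gamma)$, the multiplier is well-defined on $\GSp_{2g}(\F_\ell)/\{\pm I\}$, and for any $\gamma$,
\[
\mult(\tilde{\sigma}(\gamma)) = \mult(\overline{\tilde{\sigma}(\gamma)}) = \mult(\sigma(\bar{\gamma})) = \mult(\bar{\gamma}) = \mult(\gamma),
\]
where the third equality uses $\sigma \in \Aut_\mult(\GSp_{2g}(\F_\ell)/\{\pm I\})$. Therefore Lemma \ref{Aut-mult}\eqref{chrpoly-GSp-mult} applies to $\tilde{\sigma}$, giving $P_{\tilde{\sigma}(\gamma_2)}(t) \in \{P_{\gamma_2}(t),\, P_{\gamma_2}(-t)\}$.

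Finally, the hypothesis $\bar{\gamma}_1 = \sigma(\bar{\gamma}_2) = \overline{\tilde{\sigma}(\gamma_2)}$ means $\gamma_1 = \pm \tilde{\sigma}(\gamma_2)$ in $\GSp_{2g}(\F_\ell)$. Observe that since $2g$ is even,
\[
P_{-M}(t) = \det(tI + M) = (-1)^{2g}\det(-tI - M) = P_M(-t)
\]
for any $M \in M_{2g\times 2g}(\F_\ell)$. Thus $P_{\gamma_1}(t)$ equals either $P_{\tilde{\sigma}(\gamma_2)}(t)$ or $P_{\tilde{\sigma}(\gamma_2)}(-t)$, and in either case lies in $\{P_{\gamma_2}(t),\, P_{\gamma_2}(-t)\}$, as required. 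The only mildly delicate point is verifying that the multiplier descends through $\{\pm I\}$ cleanly enough that the lift $\tilde{\sigma}$ inherits the $\mult$-preserving property; this is immediate from $\mult(\gamma) = \mult(-\gamma)$, so no real obstacle arises. Note that the condition $(\gamma_1,\gamma_2) \in \Delta_{2g}(\F_\ell)$, i.e.\ $\mult(\gamma_1)=\mult(\gamma_2)$, is not explicitly invoked but is automatically consistent with $\gamma_1 = \pm \tilde{\sigma}(\gamma_2)$.
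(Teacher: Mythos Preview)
Your proof is correct and follows essentially the same approach as the paper: lift $\sigma$ via Lemma~\ref{surj-aut}, apply Lemma~\ref{Aut-mult}\eqref{chrpoly-GSp-mult} to the lift, and use $\gamma_1 = \pm \tilde{\sigma}(\gamma_2)$ to conclude. In fact you supply more detail than the paper does, explicitly verifying that the lift $\tilde{\sigma}$ lands in $\Aut_{\mult}(\GSp_{2g}(\F_\ell))$ and justifying $P_{-M}(t) = P_M(-t)$, both of which the paper leaves implicit.
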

\begin{proof}
By  Lemma \ref{surj-aut}, the automorphism 
$\sigma \in \Aut_{\mult}(\GSp_{2g}(\F_\ell)/\set{\pm I})$ can be lifted to $\sigma'\in \Aut_{\mult}(\GSp_{2g}(\F_\ell))$. Then $\gamma_1 = \pm \sigma(\overline{\gamma}_2)'=\pm \sigma'(\gamma_2)$.  Hence by Lemma \ref{Aut-mult}(\ref{chrpoly-GSp-mult}),
\[
P_{\gamma_1}(t)\in \{P_{\sigma'(\gamma_2)}(t), P_{-\sigma'(\gamma_2)}(t)\}=\{P_{\sigma'(\gamma_2)}(t), P_{\sigma'(\gamma_2)}(-t)\}= 
\{P_{\gamma_2}(t), P_{\gamma_2}(-t)\}.
\]
The claim follows.
\end{proof}

\subsection{Subgroups of \texorpdfstring{$\Delta_{2g}(\F_\ell)$}{Delta2gFell} with surjective projections}

We now prove our main group-theoretic proposition, which classifies subgroups of $\Delta_{2g}(\F_\ell)$ that surject onto both factors. Our proof considers three cases arising from Goursat's lemma, and utilizes Lemma \ref{Aut-mult}(\ref{Aut-mult-1}) and Lemma \ref{surj-aut}.

\begin{proposition} \label{prop-class}  Let $G$ be a subgroup of $\Delta_{2g}(\F_\ell)$. 
If $ \pi_i(G) = \GSp_{2g}(\F_\ell)$ holds for each $i \in \set{1,2}$, then one of the following holds: 
\begin{enumerate}
    \item \label{class-1} $G = \set{(\gamma,\phi(\gamma)) \in \Delta_{2g}(\F_\ell) : \gamma \in \GSp_{2g}(\F_\ell)}$ for some $\phi \in \Aut_{\mult}(\GSp_{2g}(\F_\ell))$, 
    \item \label{class-2}  $G = \set{(\gamma_1,\gamma_2) \in \Delta_{2g}(\F_\ell) :
    \bar{\gamma}_1 = \psi(\bar{\gamma}_2)}$ for some $\psi \in \Aut_{\mult}(\GSp_{2g}(\F_\ell) / \set{\pm I})$, 
    \item \label{class-3} $G = \Delta_{2g}(\F_\ell)$.
\end{enumerate}
\end{proposition}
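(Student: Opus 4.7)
The plan is to apply Goursat's lemma (Lemma \ref{Goursat}) to $G \subseteq \GSp_{2g}(\F_\ell) \times \GSp_{2g}(\F_\ell)$ and then leverage the ambient constraint $G \subseteq \Delta_{2g}(\F_\ell)$ to sharply restrict the possible common quotient. Since $\pr_i(G) = \GSp_{2g}(\F_\ell)$ for each $i \in \set{1,2}$, Goursat produces a finite group $Q$ together with surjections $\phi_1, \phi_2: \GSp_{2g}(\F_\ell) \twoheadrightarrow Q$ such that $G = \GSp_{2g}(\F_\ell) \times_{(\phi_1,\phi_2)} \GSp_{2g}(\F_\ell)$.

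My first step will be to show that both kernels sit inside $\Sp_{2g}(\F_\ell)$. If $\gamma \in \ker \phi_1$, then $\phi_1(\gamma) = \phi_2(I_{2g})$, so $(\gamma, I_{2g}) \in G \subseteq \Delta_{2g}(\F_\ell)$, which forces $\mult(\gamma) = \mult(I_{2g}) = 1$; a symmetric argument handles $\ker \phi_2$. Lemma \ref{L:GSpProperties}\eqref{normal-Sp} then restricts each kernel to $\set{I_{2g}}$, $\set{\pm I_{2g}}$, or $\Sp_{2g}(\F_\ell)$. Since both quotients are isomorphic to $Q$, the two kernels have the same order, and therefore must coincide, leaving three cases to analyze.

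If $\ker \phi_i = \set{I_{2g}}$, both $\phi_i$ are automorphisms of $\GSp_{2g}(\F_\ell)$, so $\phi \coloneqq \phi_2^{-1} \circ \phi_1$ gives $G = \set{(\gamma,\phi(\gamma)) : \gamma \in \GSp_{2g}(\F_\ell)}$, and the multiplier equality on $G$ places $\phi \in \Aut_{\mult}(\GSp_{2g}(\F_\ell))$, yielding \eqref{class-1}. If $\ker \phi_i = \set{\pm I_{2g}}$, each $\phi_i$ descends via the first isomorphism theorem to an automorphism $\bar\phi_i$ of $\GSp_{2g}(\F_\ell)/\set{\pm I}$, and $\psi \coloneqq \bar\phi_1^{-1} \circ \bar\phi_2$ encodes the defining relation $\bar\gamma_1 = \psi(\bar\gamma_2)$ as in \eqref{class-2}; since $\mult$ descends to the quotient, the multiplier equality on $G$ places $\psi \in \Aut_{\mult}(\GSp_{2g}(\F_\ell)/\set{\pm I})$. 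Finally, if $\ker \phi_i = \Sp_{2g}(\F_\ell)$, both $\phi_i$ factor through $\mult$ as $f_i \circ \mult$ for some $f_i \in \Aut(\F_\ell^\times)$, and the identity $\mult(\gamma_1) = \mult(\gamma_2)$ on $G$ forces $f_1 = f_2$, giving $G = \Delta_{2g}(\F_\ell)$, which is \eqref{class-3}.

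The main technical obstacle I expect is extracting the multiplier constraint cleanly through Goursat—specifically the argument that $\ker \phi_i \subseteq \Sp_{2g}(\F_\ell)$ and the subsequent verification that the induced automorphism in each case respects $\mult$. Once those ingredients are secured, the classification reduces to bookkeeping with the normal subgroup structure of $\Sp_{2g}(\F_\ell)$ provided by Lemma \ref{L:GSpProperties}.
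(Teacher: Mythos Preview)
Your proposal is correct and follows essentially the same approach as the paper: apply Goursat's lemma, use the multiplier constraint to force $\ker\phi_i \subseteq \Sp_{2g}(\F_\ell)$, and invoke Lemma~\ref{L:GSpProperties}\eqref{normal-Sp} to reduce to the three cases. The only cosmetic difference is that in Case~3 the paper concludes $G = \Delta_{2g}(\F_\ell)$ by a direct order count via \eqref{order-Delta}, whereas you argue that $f_1 = f_2$; both work, and your handling of Cases~1--2 via the fiber-product description is slightly more direct than the paper's inclusion-plus-cardinality argument.
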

\begin{proof}

By Lemma \ref{Goursat}, there exists a group $Q$ and surjective homomorphisms
\[
\phi_1\colon \GSp_{2g}(\F_\ell) \twoheadrightarrow Q 
\quad \text{and} \quad
\phi_2\colon \GSp_{2g}(\F_\ell) \twoheadrightarrow Q
\]
such that $G$ is the fiber product
\begin{equation} G = \GSp_{2g}(\F_\ell) \times_{(\phi_1,\phi_2)}  \GSp_{2g}(\F_\ell). \label{Gfiber} \end{equation}
Note that $\ker \phi_1 \times \set{I}
\subseteq G$. Since  $G \subseteq \Delta_{2g}(\F_\ell)$, for each $k_1 \in \ker \phi_1$, we have that $\mult(k_1) = \mult(I) = 1$. Hence $\ker \phi_1 \subseteq \Sp_{2g}(\F_\ell)$, and  it follows from Lemma \ref{L:GSpProperties}(\ref{normal-Sp}) that
\[ \ker \phi_1 \in \set{ \set{I}, \set{\pm I},\Sp_{2g}(\F_\ell)}.\]
As $Q \cong \GSp_{2g}(\F_\ell)/\ker\phi_1$, we deduce that up to isomorphism,
\[
Q \in \set{  \GSp_{2g}(\F_\ell), \GSp_{2g}(\F_\ell)/\set{\pm I},\F_\ell^\times }.
\]
The three cases give rise to the three possibilities in the statement of the proposition, as we now show.

\emph{Case 1.} Assume that $Q \cong \GSp_{2g}(\F_\ell)$. Then for each $i \in \set{1,2}$,  $\phi_i$ is an isomorphism. We  consider the automorphism $\phi_2^{-1} \circ \phi_1 \in \Aut(\GSp_{2g}(\F_\ell))$. For each $\gamma \in \GSp_{2g}(\F_\ell)$, note that 
\[ \phi_1(\gamma) = \phi_2((\phi_2^{-1} \circ \phi_1)(\gamma)).\]
Hence by (\ref{Gfiber}),
\[
( \gamma, (\phi_2^{-1} \circ \phi_1)(\gamma) ) \in G.
\]
Thus as $G \subseteq \Delta_{2g}(\F_\ell)$,
\[ \phi_2^{-1} \circ \phi_1 \in \Aut_{\mult}(\GSp_{2g}(\F_\ell)).\]
Now we set $H  \coloneqq  \set{(\gamma, (\phi_2^{-1} \circ \phi_1)(\gamma)) : \gamma \in \GSp_{2g}(\F_\ell)} \subseteq G$. Then,  $G = H$, since by  (\ref{order-Delta}),
\[ \abs{G} = \frac{\abs{\GSp_{2g}(\F_\ell)}^2 }{\abs{\GSp_{2g}(\F_\ell)}} = \abs{\GSp_{2g}(\F_\ell)} = \abs{H}. \]

\textit{Case 2.} Assume that $Q \cong \GSp_{2g}(\F_\ell) / \set{\pm I}$. Then, for  each $i \in \set{1,2}$,  $\phi_i:\GSp_{2g}(\F_\ell) \twoheadrightarrow  Q$ is a surjective homomorphism with  $\ker \phi_i = \set{\pm I}$. Thus $\phi_i$ induces the isomorphism
\[ \tilde{\phi}_i \colon \GSp_{2g}(\F_\ell)/\set{\pm I} \to  Q  \]
defined such that  for each $\gamma \in \GSp_{2g}(\F_\ell)$,   $\tilde{\phi}_i(\bar{\gamma}) = \phi_i(\gamma)$, where $\bar{\gamma}$ denotes the coset of $\gamma$ in $\GSp_{2g}(\F_\ell)/\set{\pm I}$. For each $\gamma \in \GSp_{2g}(\F_\ell)$, note that
\[ \tilde{\phi}_1(\bar{\gamma}) = \tilde{\phi}_2((\tilde{\phi}_2^{-1} \circ \tilde{\phi}_1)(\bar{\gamma})). \]
Hence $(\bar{\gamma},(\tilde{\phi}_2^{-1} \circ \tilde{\phi}_1)(\bar{\gamma}))$ lies in the image of $G$ under the projection 
\[ \Delta_{2g}(\F_\ell) \to \GSp_{2g}(\F_\ell)/\set{\pm I} \times_{(\mult,\mult)} \GSp_{2g}(\F_\ell)/\set{\pm I}.\]
Since $G \subseteq \Delta_{2g}(\F_\ell)$, it follows that
\[ \tilde{\phi}_1^{-1} \circ \tilde{\phi}_2 \in \Aut_{\mult}(\GSp_{2g}(\F_\ell) / \set{\pm I}). \]
Set $H \coloneqq  \{(\gamma_1,\gamma_2) \in \Delta_{2g}(\F_\ell) : \bar{\gamma}_1 = (\tilde{\phi}_2^{-1} \circ \tilde{\phi}_1)(\bar{\gamma}_2)\} \subseteq G$.
We see that $G = H$ since by \eqref{order-Delta}
\[ \abs{G} = \frac{\abs{\GSp_{2g}(\F_\ell)}^2}{\abs{\GSp_{2g}(\F_\ell)/\set{\pm I}}} = 2 \abs{\GSp_{2g}(\F_\ell)} = \abs{H}. \]

\textit{Case 3.} Assume that $Q \cong \F_\ell^\times$. Then by \eqref{order-Delta}, the order of $G$ coincides with that of $\Delta_{2g}(\F_\ell)$, and hence $G = \Delta_{2g}(\F_\ell)$.
\end{proof}

From this classification, we can derive two corollaries that characterize when a subgroup $G$ of $\Delta_{2g}(\F_\ell)$ is equal to the full group $\Delta_{2g}(\F_\ell)$, by examining whether there exists an element in $G$ whose characteristic polynomial or trace satisfies a certain condition. 

Recall that $\pr_i:\Delta_{2g}(\F_\ell) \twoheadrightarrow \GSp_{2g}(\F_\ell)$ is the projection map onto the $i$-th component.

\begin{corollary} \label{corr-class-chrpoly} Assume $\ell \geq 5$.  A subgroup $G \subseteq \Delta_{2g}(\F_\ell)$ satisfies $G = \Delta_{2g}(\F_\ell)$ if and only if
\begin{enumerate}
\item For each $i \in \{1,2\}$, we have that $\pr_i(G) = \GSp_{2g}(\F_\ell)$ and
\item There exists an element $(\gamma_1,\gamma_2) \in G$ such that $P_{\gamma_1}(t) \not\in \set{P_{\gamma_2}(t), P_{\gamma_2}(-t)}$.
\end{enumerate}  
\end{corollary}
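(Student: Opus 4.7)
The plan is to deduce the corollary as a direct consequence of Proposition \ref{prop-class} together with the characteristic-polynomial rigidity statements recorded in Lemma \ref{Aut-mult}(\ref{chrpoly-GSp-mult}) and Lemma \ref{chrpoly-mod}. No new group theory should be required; the whole thing is a bookkeeping argument on top of the classification.

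The forward direction is routine: if $G=\Delta_{2g}(\F_\ell)$, then (1) is immediate. For (2), I would exhibit an explicit pair — for instance take $\gamma_1 = I_{2g}$, so $P_{\gamma_1}(t) = (t-1)^{2g}$ and $P_{\gamma_1}(-t) = (t+1)^{2g}$, and then pick any $\gamma_2 \in \Sp_{2g}(\F_\ell)$ whose trace differs from $\pm 2g$ (which exists for $\ell \geq 5$, e.g.\ a regular semisimple element). A one-line trace comparison shows that $P_{\gamma_1}(t) \notin \{P_{\gamma_2}(t), P_{\gamma_2}(-t)\}$.

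For the reverse direction — the substantive one — I would argue by contraposition: assuming (1) holds, Proposition \ref{prop-class} forces $G$ into one of three forms, and the goal is to show that (2) rules out cases (\ref{class-1}) and (\ref{class-2}). In case (\ref{class-1}), every element of $G$ has the form $(\gamma,\phi(\gamma))$ with $\phi\in \Aut_{\mult}(\GSp_{2g}(\F_\ell))$; by Lemma \ref{Aut-mult}(\ref{chrpoly-GSp-mult}), $P_{\phi(\gamma)}(t) \in \{P_\gamma(t), P_\gamma(-t)\}$ for all $\gamma$. Substituting $-t$ for $t$ shows that this relation is symmetric in the two polynomials, so $P_{\gamma_1}(t) \in \{P_{\gamma_2}(t), P_{\gamma_2}(-t)\}$ for every $(\gamma_1,\gamma_2) \in G$, contradicting (2). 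In case (\ref{class-2}), every element of $G$ satisfies $\bar{\gamma}_1 = \psi(\bar{\gamma}_2)$ for some $\psi \in \Aut_{\mult}(\GSp_{2g}(\F_\ell)/\{\pm I\})$, and Lemma \ref{chrpoly-mod} immediately yields $P_{\gamma_1}(t) \in \{P_{\gamma_2}(t), P_{\gamma_2}(-t)\}$, again contradicting (2). Hence case (\ref{class-3}) holds, i.e.\ $G = \Delta_{2g}(\F_\ell)$.

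There is no genuine obstacle here once Proposition \ref{prop-class} and the two mult-preserving-automorphism lemmas are in hand; the only point worth being explicit about is the symmetry of the relation ``$P(t) \in \{Q(t),Q(-t)\}$'' under swapping $P$ and $Q$, which is a single substitution $t \mapsto -t$.
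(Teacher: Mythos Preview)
Your proposal is correct and follows essentially the same approach as the paper: the paper's proof of Corollary \ref{corr-class-chrpoly} simply says that the forward direction is clear and that, for the reverse direction, Lemma \ref{Aut-mult}(\ref{chrpoly-GSp-mult}) and Lemma \ref{chrpoly-mod} rule out cases (\ref{class-1}) and (\ref{class-2}) of Proposition \ref{prop-class}, leaving only case (\ref{class-3}). Your write-up is somewhat more explicit (giving a concrete witness for the forward direction and spelling out the symmetry of the relation $P(t)\in\{Q(t),Q(-t)\}$), but the substance is identical.
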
 
\begin{proof} The forward direction is clear. For the reverse direction, Lemma \ref{Aut-mult}(\ref{chrpoly-GSp-mult}) and  Lemma \ref{chrpoly-mod} rule out cases (\ref{class-1}) and (\ref{class-2}) in Proposition \ref{prop-class}, leaving only case (\ref{class-3}), that $G = \Delta_{2g}(\F_\ell)$.
\end{proof}

\begin{corollary} \label{corr-class} Assume $\ell \geq 5$.  A subgroup $G \subseteq \Delta_{2g}(\F_\ell)$ satisfies $G = \Delta_{2g}(\F_\ell)$ if and only if
\begin{enumerate}
\item For each $i \in \{1,2\}$, we have that $\pr_i(G) = \GSp_{2g}(\F_\ell)$ and
\item There exists an element $(\gamma_1,\gamma_2) \in G$ such that $\tr \gamma_1 \not\in \set{\tr \gamma_2,-\tr \gamma_2}$.
\end{enumerate}  
\end{corollary}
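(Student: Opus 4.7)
The plan is to derive this corollary directly from Corollary \ref{corr-class-chrpoly} by observing that the trace of a matrix is recoverable (up to sign) from the characteristic polynomial, so the hypothesis on traces implies the hypothesis on characteristic polynomials needed to invoke that corollary.

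First, the forward direction is immediate: if $G = \Delta_{2g}(\F_\ell)$, then each $\pr_i(G) = \GSp_{2g}(\F_\ell)$, and the existence of a pair $(\gamma_1,\gamma_2) \in G$ with $\tr \gamma_1 \notin \set{\tr \gamma_2, -\tr \gamma_2\}$ is easily verified (e.g., by choosing $\gamma_1$ and $\gamma_2$ with the same multiplier but sufficiently different trace; alternatively, it follows formally from the forward direction of Corollary \ref{corr-class-chrpoly}).

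For the reverse direction, suppose conditions (1) and (2) hold. The key observation is that for any $\gamma \in \GSp_{2g}(\F_\ell)$, the characteristic polynomial has the form
\[ P_\gamma(t) = t^{2g} - (\tr \gamma) \, t^{2g-1} + \cdots + \det \gamma, \]
so the coefficient of $t^{2g-1}$ in $P_\gamma(t)$ is $-\tr\gamma$, while the coefficient of $t^{2g-1}$ in $P_\gamma(-t)$ is $+\tr\gamma$. Consequently, if $P_{\gamma_1}(t) = P_{\gamma_2}(t)$ then $\tr \gamma_1 = \tr \gamma_2$, and if $P_{\gamma_1}(t) = P_{\gamma_2}(-t)$ then $\tr \gamma_1 = -\tr \gamma_2$. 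Taking the contrapositive, the assumption $\tr \gamma_1 \notin \set{\tr \gamma_2, -\tr \gamma_2\}$ forces $P_{\gamma_1}(t) \notin \set{P_{\gamma_2}(t), P_{\gamma_2}(-t)\}$. Condition (2) of Corollary \ref{corr-class-chrpoly} is therefore satisfied, and since condition (1) is identical in the two corollaries, Corollary \ref{corr-class-chrpoly} yields $G = \Delta_{2g}(\F_\ell)$.

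There is no real obstacle here; the whole point is that the trace is one specific coefficient of the characteristic polynomial, so a trace-level hypothesis suffices to rule out the equalities $P_{\gamma_1}(t) = P_{\gamma_2}(\pm t)$ that Corollary \ref{corr-class-chrpoly} needs to exclude. The proof is essentially a one-paragraph reduction and does not require revisiting the Goursat-lemma case analysis of Proposition \ref{prop-class}.
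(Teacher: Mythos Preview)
Your proof is correct and follows essentially the same approach as the paper: the forward direction is clear, and the reverse direction is an immediate reduction to Corollary \ref{corr-class-chrpoly} via the observation that $\tr\gamma$ appears as a coefficient of $P_\gamma(t)$. You have simply spelled out in a bit more detail the one-line reduction the paper leaves implicit.
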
 
\begin{proof}  The forward direction is clear. The reverse direction follows from Corollary \ref{corr-class-chrpoly}.
\end{proof}

\subsection{Subgroups of \texorpdfstring{$\Delta_{g_1, g_2}(\F_\ell)$}{Deltag1g2(Fell)} with surjective projections}

We now turn our attention to the case where $n = 2$ and $g_1, g_2$ are distinct positive integers. In particular, we prove a group-theoretic lemma that classifies subgroups of $\Delta_{g_1, g_2}(\F_\ell)$ with  surjective projections  onto both factors. This lemma is essential for proving the effective open image theorem for a product of two abelian varieties of different dimensions (Theorem \ref{main-thrm-2}).

\begin{lemma}\label{group-lemma-2} Assume that $\ell\geq 5$ and  $g_1 \neq g_2$. If a subgroup $G$ of $\Delta_{g_1,g_2}(\F_{\ell}) $ surjects onto both $\GSp_{2g_1}(\F_{\ell})$ and $\GSp_{2g_2}(\F_{\ell})$ via the natural projection maps, then $G = \Delta_{g_1,g_2}(\F_\ell)$.
\end{lemma}
\begin{proof} By Lemma \ref{Goursat}, we may write
\[
G = \GSp_{2g_1}(\F_\ell) \times_{(\phi_1,\phi_2)} \GSp_{2g_2}(\F_\ell)
\]
for some $\phi_1, \phi_2$ such that
\[ \GSp_{2g_1}(\F_\ell) / \ker \phi_1 \cong  \GSp_{2g_2}(\F_\ell) / \ker \phi_2. \]
Similar to the argument in Proposition \ref{prop-class}, we have that
\[
\ker \phi_1 \subseteq \Sp_{2g_1}(\F_\ell)
\quad \text{and} \quad
\ker \phi_2 \subseteq \Sp_{2g_2}(\F_\ell).
\]
Thus for $i \in \{1,2\}$,
\[ \ker \phi_i \in \{ \{I\}, \{\pm I \}, \Sp_{2g_i}(\F_\ell) \}. \]
Hence if $\ker \phi_i \neq \Sp_{2g_i}(\F_\ell)$, then
\[
[\GSp_{2g_i}(\F_\ell) : \ker \phi_i] = |\GSp_{2g_i}(\F_\ell)|
\quad \text{or} \quad
[\GSp_{2g_i}(\F_\ell) : \ker \phi_i] = \tfrac{1}{2} |\GSp_{2g_i}(\F_\ell)|.
\]
However, since $g_1 \neq g_2$, it follows from \eqref{E:gsp-order} that
\[
\{ |\GSp_{2g_1}(\F_\ell)|, \tfrac{1}{2} |\GSp_{2g_1}(\F_\ell)| \} \cap \{ |\GSp_{2g_2}(\F_\ell)|, \tfrac{1}{2} |\GSp_{2g_2}(\F_\ell)| \} = \emptyset.
\]
Therefore, we have $\ker \phi_i = \Sp_{2g_i}(\F_\ell)$ for $i \in \{ 1, 2\}$. Hence $G = \Delta_{g_1,g_2}(\F_\ell)$.
\end{proof}

\section{An effective version of Faltings's isogeny theorem}\label{S4}

\subsection{An extension of Serre's lemma}

We begin by proving Proposition \ref{lemma-Serre-isogeny}, which is an extension of a lemma of Serre on $\ell$-adic representations \cite[Note 632.6]{MR3185222},  \cite[Theorem 4.7]{MR3502938}.

To set the notation, let $\ell$ be a prime number, $g$ be a positive integer, $\Gamma$ be a profinite group, and 
\[ \rho_{1} \colon \Gamma \to \GSp_{2g}(\Z_\ell) \quad \text{and}\quad \rho_{2} \colon \Gamma \to \GSp_{2g}(\Z_\ell)\]
be continuous homomorphisms. Assume that the image, denoted $\overline{\Gamma}$, of the product map
\begin{align*}\rho_1\times \rho_2\colon\Gamma &\to  \GSp_{2g}(\Z_\ell) \times \GSp_{2g}(\Z_\ell) 
\end{align*}
is a subgroup of $\Delta_{2g}(\Z_\ell)$. Let $M$ denote the $\Z_\ell$-algebra of all $2g \times 2g$ matrices with  entries in $\Z_\ell$   and  let $A$ denote the subalgebra of $M\times M$ generated by $\overline{\Gamma}$. Since $\overline{\Gamma}$ is a group, $A$ equals the $\Z_\ell$-span of $\overline{\Gamma}$ inside $M \times M$. We consider the  map
 \begin{align*}
T\colon M \times M &\to  \Z_{\ell} \\
(x_1, x_2) &\mapsto \tr(x_1)-\tr( x_2),
\end{align*}
which one can verify is a $\Z_\ell$-module homomorphism.

We now state our target proposition, which is crucial for our proof of Theorem \ref{effective-Faltings}.

\begin{proposition} \label{lemma-Serre-isogeny}  We keep the notation above. Assume that $\ell\nmid 2g$ and that $\overline{\Gamma}$ is an open subgroup of $\Delta_{2g}(\Z_\ell)$. Then there exists a finite quotient $G$ of $\Gamma$ and a subset $C_{\pm} \subseteq G$ with the properties that
\begin{enumerate}
\item the set $C_{\pm}$ is nonempty and is closed under conjugation in $G$,
\item the cardinality of $G$ is at most $(\ell^{8g^2} - 1)^2$, and 
\item \label{Cond3} if the image of $\gamma \in \Gamma$  in $G$ belongs to $C_{\pm}$, then $\tr \rho_1(\gamma) \neq \pm \tr \rho_2(\gamma)$.
\end{enumerate}
\end{proposition}

\begin{remark}
An earlier version of this article presented the above proposition in a more general, but incorrect, form. We are grateful to the referee for identifying this error and bringing it to our attention. While it seems possible to frame the proposition more generally than in its current form (by replacing $\GSp_{2g}(\Z_\ell)$ with $\GL_{r}(\Z_\ell)$ for any $r \geq 2$ and weakening the condition that the image of the product map is open), we do not explore this further, as it is not necessary in this paper.
\end{remark}

The rest of this subsection is devoted to proving Proposition \ref{lemma-Serre-isogeny}. Our proof is inspired by the proof of Serre's lemma that appears in \cite[Theorem 4.7]{MR3502938}. However, the situation here is complicated by conclusion \eqref{Cond3} of Proposition \ref{lemma-Serre-isogeny}, namely that for all $\gamma \in \Gamma$ whose image in $G$ belongs to $C_{\pm}$, we have that $\tr \rho_1(\gamma)$ is distinct from {both} $\tr \rho_2(\gamma)$ and $-\tr \rho_2(\gamma)$ simultaneously.

\begin{proof} We begin by setting up the notation to define certain distinguished sets $C_+, C_-, $ and $C_{\pm}$ associated with $A$. Consider the naturally defined map
\begin{equation} \label{GammaAellA}  \overline{\Gamma} \hookrightarrow A \twoheadrightarrow A/\ell A. \end{equation}
The image of \eqref{GammaAellA} is a group in the $\Z_\ell$-algebra $A/\ell A$, as the map above preserves the multiplicative structure. We denote by  $\Gamma_1\trianglelefteq  \overline{\Gamma}$ the kernel of the group homomorphism. It is convenient to view $\overline{\Gamma}/\Gamma_1$ as a subset of $A/\ell A$ in light of the natural injective map $\overline{\Gamma}/\Gamma_1\hookrightarrow A/\ell A$ of sets. Our setup also involves the following commutative diagram:
\begin{equation*}
\begin{tikzcd}
 \overline{\Gamma} \ar[r, hook] \ar[d, two heads, "\pi_1"] & A \ar[r] \ar[d, two heads] & A/\ell A \ar[d, "\bar{\lambda}_+"]\\
 \overline{\Gamma}/\Gamma_1 \ar[r, hook] & A/\ell A \ar[r, "\bar{\lambda}_+"] & \Z/\ell\Z
\end{tikzcd}
\end{equation*}
In the diagram above, the first two vertical surjective maps are canonical projections as group and $\Z_\ell$-module homomorphisms, respectively. 
The maps in the top row are defined as in (\ref{GammaAellA}). The first map in the second row is the inclusion.  Recalling that $A$ is the $\Z_\ell$-span of $\overline{\Gamma}$, we have that $\tr \rho_1(\gamma) \equiv \tr \rho_2 (\gamma) \pmod{\ell^m}$ for all $\gamma\in \Gamma$ implies $\tr x_1 \equiv  \tr x_2 \pmod \ell$ for all $(x_1, x_2)\in A$. 
Then, the vertical $\Z/\ell\Z$-module homomorphism $\bar{\lambda}_+\colon A/\ell A\to \Z/\ell\Z$ is    induced by the $\Z_{\ell}$-module homomorphism
$\lambda_+\colon A \to \Z_{\ell}$, defined by
\[
\lambda_+(x_1,x_2) = \ell^{-m}(\tr x_1 - \tr x_2),
\]
where $m$ is the largest nonnegative integer such that the congruence 
\[ \tr \rho_1(\gamma) \equiv \tr \rho_2 (\gamma) \pmod{\ell^m} \]
holds for each $\gamma \in \Gamma$. The existence of such an $m$ follows from the assumption that $\overline{\Gamma}$ is an open subgroup of $\Delta_{2g}(\Z_\ell)$, since no open subgroup of $\Delta_{2g}(\Z_\ell)$ can be contained in the set $\{(\gamma_1,\gamma_2) \in \Delta_{2g}(\Z_\ell) : \tr \gamma_1 = \tr \gamma_2\}$ by index considerations. Moreover,   $\bar{\lambda}_+$ is well-defined because $\lambda_+(\ell A)\subseteq \ell\Z_{\ell}$. Considering $\overline{\Gamma}/\Gamma_1$ as a subset of $A/\ell A$, we define the set
\begin{equation}\label{C-plus}
C_+ \coloneqq  (\overline{\Gamma} / \Gamma_1) \setminus  \ker \bar{\lambda}_+. 
\end{equation}

A similar diagram can be made for the construction  of $C_-$:
\begin{equation*}
\begin{tikzcd}
 \overline{\Gamma} \ar[r, hook] \ar[d, two heads, "\pi_1"] & A \ar[r] \ar[d, two heads]& A/\ell A
 \ar[d, "\bar{\lambda}_-"] \\
 \overline{\Gamma}/\Gamma_1 \ar[r, hook] & A/\ell A \ar[r, "\bar{\lambda}_-"] & \Z/\ell\Z
\end{tikzcd}
\end{equation*}
In this second diagram, the vertical $\Z/\ell\Z$-module homomorphism   $\bar{\lambda}_-\colon A/\ell A\to \Z/\ell\Z$ is   induced by the $\Z_{\ell}$-module homomorphism $\lambda_-\colon A\to \Z_{\ell}$, which itself is defined by
\[ \lambda_-(x_1,x_2) = \tr x_1 + \tr x_2. \]
Again viewing $\overline{\Gamma}/\Gamma_1$ as a subset of $A/\ell A$, we define the set
\begin{equation}\label{C-minus}
C_- \coloneqq  (\overline{\Gamma} / \Gamma_1) \setminus \ker \bar{\lambda}_-. 
\end{equation}

Next, let $A_1$ denote the  $\Z_\ell$-subalgebra of $A$ generated by  $\Gamma_1$.  Analogously to \eqref{GammaAellA}, we consider the map
\begin{equation} \label{GammaAellA1} \Gamma_1 \hookrightarrow A_1 \twoheadrightarrow A_1/\ell A_1 \end{equation}
Similarly as before, the image of \eqref{GammaAellA1} is a group.

Our goal now is to define a  map $\bar{\lambda}_{\pm}\colon A_1/\ell A_1 \to \Z/\ell\Z$, which will play an essential role in proving Proposition \ref{lemma-Serre-isogeny}. To do so, we first establish some properties of the aforementioned objects. With an abuse of notation, we write $\pi_1$ for the map 
\[ \pi_1 \colon \Gamma \twoheadrightarrow \overline{\Gamma} \twoheadrightarrow \overline{\Gamma}/\Gamma_1.\]

\begin{lemma}\label{prop-Serre-lemma-0} We keep the notation and assumptions as above.
\begin{enumerate}
    \item \label{lemma-Cplus}
    The set $C_+$ is nonempty and closed under conjugation by elements of $\overline{\Gamma}/\Gamma_1$. Moreover, for each $\gamma \in \Gamma$,
    if $\pi_1(\gamma) \in C_+$, then   $\tr \rho_1(\gamma) \neq \tr \rho_2(\gamma)$.
    \item \label{lemma-Cminus} The set $C_-$ is nonempty and is closed under conjugation by elements of $\overline{\Gamma}/\Gamma_1$. Moreover, for each $\gamma \in \Gamma$, if
   $\pi_1(\gamma) \in C_-$, then $\tr \rho_1(\gamma) \neq -\tr \rho_2(\gamma)$.
   \item 
   \label{lemma-Gamma1-minus} For each $\gamma \in \Gamma$, if the image of $\gamma$ in $\overline{\Gamma}$ is contained in  $\Gamma_1$, then  $\tr \rho_1(\gamma) \neq -\tr \rho_2(\gamma)$.
   \item \label{lemma-Gamma1-plus} There exists $\gamma \in \Gamma$ such that  the image of $\gamma$ in $\overline{\Gamma}$ is contained in  $\Gamma_1$ and  $\tr \rho_1(\gamma) \neq \tr \rho_2(\gamma)$.
   \end{enumerate}
\end{lemma}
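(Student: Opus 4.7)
For parts (1) and (2), the plan is to exhibit a distinguished element of each $C_\pm$ and verify conjugation-invariance, then extract the ``moreover'' clauses directly from the definition of $\bar{\lambda}_\pm$. For (1): the very definition of $m$ supplies some $\gamma_+ \in \Gamma$ with $\tr\rho_1(\gamma_+) - \tr\rho_2(\gamma_+) = \ell^m u$ for $u \in \Z_\ell^\times$, whence $\lambda_+((\rho_1 \times \rho_2)(\gamma_+)) = u$ is a unit and $\bar{\lambda}_+(\pi_1(\gamma_+)) \ne 0$ in $\Z/\ell\Z$, so $\pi_1(\gamma_+) \in C_+$. For (2): the identity element of $\overline{\Gamma}/\Gamma_1$ lies in $C_-$, since $\lambda_-(I,I) = 2r$, which is nonzero modulo $\ell$ by the hypothesis $\ell \nmid 2r$. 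Both $C_+$ and $C_-$ are closed under conjugation because $\lambda_\pm$ are built from traces, which are class functions, so $\bar{\lambda}_\pm$ descend to class functions on $\overline{\Gamma}/\Gamma_1$ and the complements of their kernels are conjugation-invariant. The moreover clauses follow by unwinding: $\pi_1(\gamma) \in C_+$ forces $\lambda_+((\rho_1 \times \rho_2)(\gamma))$ to be a unit in $\Z_\ell$, hence $\tr\rho_1(\gamma) - \tr\rho_2(\gamma) = \ell^m \cdot (\text{unit}) \ne 0$; the $C_-$ case is entirely analogous.

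For (3), if the image of $\gamma$ in $\overline{\Gamma}$ lies in $\Gamma_1 = \ker(\overline{\Gamma} \to A/\ell A)$, then $\rho_i(\gamma) - I \in \ell M$, so $\tr\rho_i(\gamma) \equiv r \pmod{\ell}$ for both $i$. Adding the two congruences yields $\tr\rho_1(\gamma) + \tr\rho_2(\gamma) \equiv 2r \pmod{\ell}$, which is nonzero by $\ell \nmid 2r$, and hence $\tr\rho_1(\gamma) \ne -\tr\rho_2(\gamma)$.

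Part (4) is the main obstacle, since in contrast to (3) there is no ``identity-trick'' available — the identity element of $\Gamma_1$ has $\tr\rho_1 = \tr\rho_2 = r$, so one genuinely needs a nontrivial element. The plan is to argue by contradiction: assume $\tr\rho_1(\gamma) = \tr\rho_2(\gamma)$ for every $\gamma \in \Gamma$ whose image lies in $\Gamma_1$, and let $\mu: A \to \Z_\ell$, $\mu(x_1,x_2) = \tr x_1 - \tr x_2$. Then $\mu$ vanishes on $\Gamma_1 \subseteq A$; by $\Z_\ell$-linearity of $\mu$ and multiplicative closure of $\Gamma_1$, this forces $\mu$ to vanish on the $\Z_\ell$-span of $\Gamma_1$, which is already closed under products (as $g,g' \in \Gamma_1 \Rightarrow gg' \in \Gamma_1$) and hence coincides with the $\Z_\ell$-subalgebra $A_1$ generated by $\Gamma_1$. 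The goal is then to extract a contradiction with the choice of $m$ and the hypothesis $\tr\rho_1(\gamma_0) \ne \pm\tr\rho_2(\gamma_0)$. A natural candidate realization is to consider the power $\gamma_+^{N}$ with $N = [\overline{\Gamma}:\Gamma_1]$ (so that $\gamma_+^N$ has image in $\Gamma_1$) and to track, via Newton's identities, how the $\ell$-adic valuation of $\mu(\gamma_+^k)$ propagates from $\mu(\gamma_+)$, which has exact valuation $m$; the delicate point — and the one that will demand the most care — is ruling out accidental trace coincidences at the $N$-th power that could arise because $\overline{\Gamma}/\Gamma_1$ embeds in the finite group $(A/\ell A)^\times$.
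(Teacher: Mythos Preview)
Your treatment of parts (1)--(3) is correct and matches the paper's argument essentially verbatim. The gap is in part (4).

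You correctly set up the contradiction: assume $T(x_1,x_2) = \tr x_1 - \tr x_2$ vanishes on $\Gamma_1$, and observe that by $\Z_\ell$-linearity $T$ then vanishes on the subalgebra $A_1$ generated by $\Gamma_1$. But your proposed completion---raising $\gamma_+$ to the $N$-th power and tracking $\ell$-adic valuations via Newton's identities---does not work as stated, and you rightly flag it as ``delicate.'' The problem is that Newton's identities control $\tr(\rho_i(\gamma_+)^k)$ in terms of \emph{all} the elementary symmetric functions of the eigenvalues of $\rho_i(\gamma_+)$, not just the trace; knowing only that $\tr\rho_1(\gamma_+) - \tr\rho_2(\gamma_+)$ has exact valuation $m$ gives you essentially no control over $\tr\rho_1(\gamma_+^N) - \tr\rho_2(\gamma_+^N)$, since the higher coefficients of the two characteristic polynomials can conspire arbitrarily. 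There is no reason the difference at the $N$-th power should be nonzero.

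The paper's route bypasses this entirely by a structural observation: one shows that $A_1 = \Z_\ell I + \ell A$. The inclusion $A_1 \subseteq \Z_\ell I + \ell A$ is immediate from $\Gamma_1 \subseteq I + \ell A$; the reverse inclusion uses that $\Gamma_1$ is exactly the kernel of $\overline{\Gamma} \to A/\ell A$ and that $\overline{\Gamma}$ generates $A$. Once $A_1 = \Z_\ell I + \ell A$, the vanishing $T|_{A_1} = 0$ gives $T(\ell A) = \ell\, T(A) = 0$, hence $T(A) = 0$, contradicting the hypothesis that $T$ does not vanish on the element $(\rho_1(\gamma_0), \rho_2(\gamma_0)) \in \overline{\Gamma} \subseteq A$. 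This is the missing idea: rather than chasing a single element through powers, identify $A_1$ with a large enough submodule of $A$ that $T|_{A_1} = 0$ forces $T \equiv 0$ globally.
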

\begin{proof}
\begin{enumerate}
    \item[(1)] 
    From the definition of $\lambda_+$, there exists an element $\gamma\in \Gamma$ such that 
\[
\tr \rho_1(\gamma)\not \equiv \tr \rho_2(\gamma) \pmod{\ell^{m+1}}. 
\]
For such $\gamma$, we have that $\bar{\lambda}_+(\pi_1(\gamma))\neq 0$.
Consequently, $C_+$ is nonempty.  Since the trace is invariant under conjugation,  $C_+$ is invariant under conjugation by elements of $\overline{\Gamma}/\Gamma_1$. Finally, if $\pi_1(\gamma)\in C_+$, then $\bar{\lambda}_+(\pi_1(\gamma))\neq 0$, so
\[
\tr \rho_1(\gamma)\not \equiv \tr \rho_2(\gamma) \pmod{\ell^{m+1}}, 
\]
which implies that $\tr \rho_1(\gamma) \neq \tr \rho_2(\gamma)$.
    \item[(2)] Let $\id$ denote the identity element of the group $\Gamma$. By the assumption that $\ell \nmid 2g$, we have
    \[
    \tr \rho_1(\id) \not\equiv - \tr \rho_2(\id) \pmod{\ell}.
    \]
    Thus $\bar{\lambda}_-(\pi_1(\id)) \neq 0$, and hence $C_-$ is nonempty. The remaining claims in \eqref{lemma-Cminus} follow similarly to the proof of  \eqref{lemma-Cplus}.
    \item[(3)] Let $\gamma\in \Gamma$ 
 be such that the image of $\gamma$ in $\overline{\Gamma}$ is contained in  $\Gamma_1$. Then, this image  in $A/\ell A$ is the multiplicative identity element $(I_{2g},I_{2g}) + \ell A$. Since $\ell\nmid 2g$, we get
\[ \bar{\lambda}_-(\pi_1(\gamma)) \equiv 4g \not\equiv 0   \pmod{\ell}.\] 
In particular, $\pi_1(\gamma) \in C_-$. Thus the claim follows from the definition of $C_-$ and (\ref{lemma-Cminus}).

\item[(4)] We prove this part with an explicit construction.  By assumption, $\overline{\Gamma}$ is an open subgroup of $\Delta_{2g}(\Z_\ell)$. Thus, we may let $r$ be a positive integer large enough such that $\overline{\Gamma} = \pi^{-1}(\overline{\Gamma}(\ell^r))$, where $\pi \colon \Delta_{2g}(\Z_\ell) \to \Delta_{2g}(\Z/\ell^r\Z)$ denotes the reduction modulo $\ell^r$ map and $\overline{\Gamma}(\ell^r)$ denotes the image of $\overline{\Gamma}$ under this map. Then,
\[K \coloneqq \ker( \Delta_{2g}(\Z_\ell) \to \Delta_{2g}(\Z/\ell^r\Z)) \subseteq \overline{\Gamma}.\]
For a pair of matrices $(x_1,x_2) \in M \times M$, we have that $(x_1,x_2) \in K$ if and only if $x_1,x_2 \in \GSp_{2g}(\Z_\ell)$, $\mult x_1 = \mult x_2$, and the reductions of $x_1$ and $x_2$ are both congruent to $I_{2g}$ modulo $\ell^r$. Checking that these conditions hold, we find that
\begin{align} \label{E:Construction1}
    \left( I_{2g}, I_{2g} + \ell^r \begin{psmallmatrix} 0 & (\ell^r-1) I_{g} \\ 0 & 0 \end{psmallmatrix}\right) &\in K \subseteq A \\
    \label{E:Construction2} \left( I_{2g} + \ell^r \begin{psmallmatrix} \ell^{2r} I_{g} & 0 \\ 0 & 0 \end{psmallmatrix}, I_{2g} + \ell^r \begin{psmallmatrix} 0 & -\ell^r I_{g} \\ I_{g} & 0 \end{psmallmatrix}\right) &\in K \subseteq A,
\end{align}
where $0$ denotes the $g \times g$ zero matrix. Since $(I_{2g},I_{2g}) \in A$ and $A$ is closed under subtraction, we have
\begin{align*} 
    \left(\begin{psmallmatrix} 0 & 0 \\ 0 & 0 \end{psmallmatrix},  \begin{psmallmatrix} 0 & (\ell^{2r}-\ell^r)I_{g} \\ 0 & 0 \end{psmallmatrix}\right) &\in A \\
     \left(\begin{psmallmatrix} \ell^{3r} I_{g} & 0 \\ 0 & 0 \end{psmallmatrix}, \begin{psmallmatrix} 0 & -\ell^{2r} I_{g} \\ \ell^r I_{g} & 0 \end{psmallmatrix}\right) &\in A.
\end{align*}
Thus, their sum 
\[
\left(\begin{psmallmatrix} \ell^{3r} I_{g} & 0 \\ 0 & 0 \end{psmallmatrix}, \begin{psmallmatrix} 0 & -\ell^r I_{g} \\ \ell^r I_{g} & 0 \end{psmallmatrix}\right)
\]
is also in $A$. Similarly as for \eqref{E:Construction1} and \eqref{E:Construction2}, we further verify that
\[
\left(I_{2g} + \ell^r \begin{psmallmatrix} \ell^{3r} I_{g} & 0 \\ 0 & 0 \end{psmallmatrix}, I_{2g} + \ell^r \begin{psmallmatrix} 0 & -\ell^r I_{g} \\ \ell^r I_{g} & 0 \end{psmallmatrix}\right) \in K.
\]
Therefore, since $K \subseteq \overline{\Gamma}$, there exists an element $\gamma \in \Gamma$  such that
\begin{align*}
(\rho_1 \times \rho_2)(\gamma) &= \left(I_{2g} + \ell^r \begin{psmallmatrix} \ell^{3r} I_{g} & 0 \\ 0 & 0 \end{psmallmatrix}, I_{2g} + \ell^r \begin{psmallmatrix} 0 & -\ell^r I_{g} \\ \ell^r I_{g} & 0 \end{psmallmatrix}\right) \\
&= (I_{2g},I_{2g}) + \ell^r \left(\begin{psmallmatrix} \ell^{3r} I_{g} & 0 \\ 0 & 0 \end{psmallmatrix}, \begin{psmallmatrix} 0 & -\ell^r I_{g} \\ \ell^r I_{g} & 0 \end{psmallmatrix}\right).
\end{align*}
The element $\gamma$ satisfies the desired property. Indeed,
\[(\rho_1 \times \rho_2)(\gamma) \in (I_{2g},I_{2g}) + \ell^r A \subseteq (I_{2g},I_{2g}) + \ell A = \Gamma_1,\]
and further,
\[\tr \rho_1 (\gamma) 
= 2g + \ell^{4r} g \quad \text{and} \quad \tr \rho_2 (\gamma) = 2g,\]
so $\tr \rho_1 (\gamma) \neq \tr \rho_2 (\gamma)$. This completes  the proof.\qedhere
\end{enumerate}
\end{proof}

Now we denote by  $\Gamma_2\trianglelefteq  \Gamma_1$ the kernel of the group homomorphism \eqref{GammaAellA1}.
Consider the following commutative diagram, which will be used to construct $C_{\pm}$:
\begin{equation*}
\begin{tikzcd}
   \Gamma_1  \ar[r, hook] \ar[d, two heads, "\pi_2"] & A_1 \ar[r] \ar[d, two heads] & A_1/\ell A_1 \ar[d, "\bar{\lambda}_{\pm}"]\\
  \Gamma_1/\Gamma_2 \ar[r, hook] & A_1/\ell A_1 \ar[r, "\bar{\lambda}_{\pm}"] & \Z/\ell\Z
\end{tikzcd}
\end{equation*}
The first two vertical maps  are projections. The vertical $\Z/\ell\Z$-module homomorphism  $\bar{\lambda}_{\pm} \colon A_1/\ell A_1 \to \Z/\ell\Z$ is induced by $\Z_{\ell}$-module homomorphism
$\lambda_{\pm} \colon A_1 \to \Z_{\ell}$ defined by
\[ {\lambda}_{\pm}(x_1,x_2) = \ell^{-m_{\pm}}(\tr x_1 - \tr x_2),\]
where $m_{\pm}$ is the largest nonnegative integer with the property that for each 
 $\gamma \in \Gamma$ such that the image of $\gamma$ in $\overline{\Gamma}$ is contained in  $\Gamma_1$,
\[ \tr \rho_1(\gamma) \equiv \tr \rho_2 (\gamma) \pmod{\ell^{m_{\pm}}} 
\quad \text{or} \quad
\tr \rho_1(\gamma) \equiv -\tr \rho_2 (\gamma) \pmod{\ell^{m_{\pm}}}. \]
Such an integer $m_{\pm}$ must exist by parts (\ref{lemma-Gamma1-minus}) and (\ref{lemma-Gamma1-plus}) of Lemma \ref{prop-Serre-lemma-0}. 

Viewing $\Gamma_1/\Gamma_2$ as a subset of $A_1/\ell A_1$ in light  of the injection $\Gamma_1/\Gamma_2\hookrightarrow A_1/\ell A_1$, we define the set
\begin{equation}\label{Cpm}
     C_{\pm} \coloneqq  (\Gamma_1 / \Gamma_2) \setminus \ker \bar{\lambda}_{\pm}.
\end{equation}

The next lemma gives some properties of $C_{\pm}$ and $\Gamma_2$. 

\begin{lemma}\label{prop-Serre-lemma}
\begin{enumerate}
  \item 
  \label{normal-gamma}
  The group $\Gamma_2$ is a normal subgroup of $\overline{\Gamma}$.
   \item
   \label{conj-pm}
   The set $C_{\pm}$ is nonempty and is closed under conjugation by $\Gamma_1/\Gamma_2$. Moreover, for each $\gamma \in \Gamma$, if the image of $\gamma$ in $\overline{\Gamma}/\Gamma_2$ is in $C_{\pm}$, then $\tr \rho_1(\gamma) \neq \pm \tr \rho_2(\gamma).$
  
   \item\label{lemma-G-order}
   $[\overline{\Gamma} \colon \Gamma_2] \leq (\ell^{8g^2}-1)^2$.
\end{enumerate}
\end{lemma}
\begin{proof}
\begin{enumerate}
 \item[(1)]
 It suffices to prove that for any $\gamma\in \overline{\Gamma}$ and any $\gamma_2\in \Gamma_2$, we have 
 \[
 \gamma \gamma_2\gamma^{-1} \in \Gamma_1 \; \text{ and } \;  \gamma \gamma_2\gamma^{-1} \in (I_{2g},I_{2g})+\ell A_1.
 \]
 The first inclusion follows from the fact that $\Gamma_1 \trianglelefteq \overline{\Gamma}$. For the second inclusion, we observe that $\gamma_2\in (I_{2g},I_{2g})+\ell A_1$ and since $A_1$ is generated by $\Gamma_1$, which is normal in $\overline{\Gamma}$, we have
 \[
 \gamma\gamma_2\gamma^{-1}\in \gamma (I_{2g}, I_{2g}) \gamma^{-1}+\ell \gamma A_1\gamma^{-1}=(I_{2g},I_{2g})+\ell A_1.
 \]
\item[(2)]
The set $C_{\pm}$ is nonempty because of the existence of the integer  $m_{\pm}$ in the definition of $\lambda_{\pm}$.
Since the trace is invariant under conjugation, $C_{\pm}$ is also closed under conjugation by $\Gamma_1/\Gamma_2$. 
For any $\gamma \in \Gamma$, if  the image of $\gamma$ in  $\overline{\Gamma}/\Gamma_2$ is  in $C_{\pm}$, then from the definition of $C_{\pm}$, 
\[
\tr \rho_1(\gamma)\neq \tr \rho_2(\gamma).
\]
 For the fixed choice of $\gamma$, since $C_{\pm}\subseteq \Gamma_1/\Gamma_2$,  we have in particular that the image of $\gamma$ in $\overline{\Gamma}$ is contained in $\Gamma_1$. By Lemma \ref{prop-Serre-lemma-0}(\ref{lemma-Gamma1-minus}), we also get that
\[
\tr \rho_1(\gamma)\neq -\tr \rho_2(\gamma).
\]
\item[(3)]
The $\Z/\ell\Z$-ranks of both $A/\ell A$ and $A_1/\ell A_1$ are bounded above by $8g^2$. Thus, considering the inclusions
\[
\overline{\Gamma}/\Gamma_1 \hookrightarrow A/\ell A \quad\text{and}\quad \Gamma_1/\Gamma_2 \hookrightarrow A_1/\ell A_1, 
\]
 we have $[\overline{\Gamma} \colon \Gamma_2] = [\overline{\Gamma} \colon \Gamma_1] [\Gamma_1 \colon \Gamma_2] \leq  (\ell^{8g^2}-1)^2$. \qedhere
\end{enumerate}
\end{proof}

We now conclude the proof of  Proposition \ref{lemma-Serre-isogeny}. Let $G \coloneqq  \overline{\Gamma}/\Gamma_2$ and let $C_{\pm}$ be defined by (\ref{Cpm}). By Lemma \ref{prop-Serre-lemma}\eqref{conj-pm}, $C_{\pm}$ is nonempty and closed under conjugation. Moreover, for  each $\gamma \in \Gamma$, if the image of $\gamma$ in $G$ is contained in $C_{\pm}$, then $\tr \rho_1(\gamma) \neq \pm \tr \rho_2(\gamma)$.  Finally, by Lemma \ref{prop-Serre-lemma}\eqref{lemma-G-order}, we have that $|G| \leq (\ell^{8g^2}-1)^2$.
\end{proof}

\subsection{Proof of Theorem \ref{effective-Faltings}} \label{S:EFI} 

Let $A$ be an abelian variety over a number field $K$. By Faltings's isogeny theorem, we know that the set of Weil polynomials of  $A$ at primes in $\Sigma_K$ is not sufficient to distinguish the $\overline{K}$-isogeny class of $A$. The following proposition  shows that under an open image assumption of  $A$, the $\overline{K}$-isogeny class of $A$ is in fact determined  by its Frobenius trace at primes in $\Sigma_K$ up to sign. 

\begin{proposition} \label{quadratic-iso} Let $A_1/K, A_2/K$ be principally polarized abelian varieties of a common dimension $g$.  Assume that the adelic Galois images $G_{A_i}$ is open in $\GSp_{2g}(\widehat{\Z})$ for each $i\in \{1, 2\}$. Then the following are equivalent:
\begin{enumerate}
\item \label{P:isog1} $A_1$ and $A_2$ are $\overline{K}$-isogenous.
\item \label{P:isog2} For all primes $\fp\in \Sigma_K$ of good reduction for $A_1$ and $A_2$, we have  $a_\fp(A_1) = \pm a_\fp(A_2)$.
\end{enumerate}
\end{proposition}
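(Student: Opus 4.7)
The plan is to use Faltings's isogeny theorem in both directions together with the openness of the $G_{A_i}$. For $(\ref{P:isog1})\Rightarrow(\ref{P:isog2})$, openness of each $G_{A_i}$ in $\GSp_{2g}(\widehat{\Z})$ forces $\End(A_{i,\overline K})=\Z$, since the commutant of an open subgroup of $\GSp_{2g}(\Z_\ell)$ inside $M_{2g}(\Q_\ell)$ consists only of scalars. Under the assumption $A_1\sim_{\overline K}A_2$, Faltings's theorem then gives $\Hom_{\overline K}(A_1,A_2)\cong\Z$, and $G_K$ acts on this rank-one free module through a continuous character $\chi:G_K\to\{\pm 1\}$. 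Unraveling how a generator $\varphi$ intertwines the Galois actions on the Tate modules yields $\rho_{A_2,\ell}\cong \rho_{A_1,\ell}\otimes\chi$, so taking traces at Frobenius gives $a_\fp(A_2)=\chi(\Frob_\fp)\,a_\fp(A_1)=\pm a_\fp(A_1)$ at every good prime $\fp$.

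For the converse $(\ref{P:isog2})\Rightarrow(\ref{P:isog1})$, fix any prime $\ell$. Openness of $G_{A_i,\ell}$ in $\GSp_{2g}(\Z_\ell)$ forces $\rho_{A_i,\ell}$ to be absolutely irreducible, since an open subgroup of $\Sp_{2g}(\Z_\ell)$ already acts absolutely irreducibly on its standard representation. Assuming $a_\fp(A_1)=\pm a_\fp(A_2)$ at every good prime, the continuous class function $\sigma\mapsto (\tr\rho_{A_1,\ell}(\sigma))^2-(\tr\rho_{A_2,\ell}(\sigma))^2$ vanishes on a dense subset of $G_K$ by the Chebotarev density theorem, hence identically. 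Thus $\tr(\rho_{A_1,\ell}^{\otimes 2})=\tr(\rho_{A_2,\ell}^{\otimes 2})$ on $G_K$, and since tensor squares of semisimple representations remain semisimple in characteristic zero, Brauer--Nesbitt yields $\rho_{A_1,\ell}^{\otimes 2}\cong \rho_{A_2,\ell}^{\otimes 2}$. Using the Weil self-duality $\rho_{A_i,\ell}^{\vee}\cong \rho_{A_i,\ell}\otimes\chi_{\mathrm{cyc}}^{-1}$ to rewrite this as $\End(V_\ell A_1)\cong \End(V_\ell A_2)$, and peeling off the scalar subrepresentation, one obtains a $G_K$-equivariant Lie-algebra isomorphism $\mathfrak{sl}(V_\ell A_1)\cong \mathfrak{sl}(V_\ell A_2)$. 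Since every automorphism of $\mathfrak{sl}_{2g}$ is either inner or an inner automorphism composed with negative-transpose, and the latter ambiguity is neutralized by the self-duality, the associated projective representations of $G_K$ are conjugate, producing a continuous character $\chi:G_K\to\overline{\Q}_\ell^{\times}$ with $\rho_{A_2,\ell}\cong \rho_{A_1,\ell}\otimes\chi$. Taking determinants and using $\det\rho_{A_i,\ell}=\chi_{\mathrm{cyc}}^{g}$ forces $\chi^{2g}=1$, so $\chi$ has finite order.

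Finally, let $L$ be the fixed field of $\ker\chi$; this is a finite extension of $K$ with $[L:K]\mid 2g$. Then $\rho_{A_1,\ell}|_{G_L}\cong \rho_{A_2,\ell}|_{G_L}$, and Faltings's isogeny theorem applied over $L$ gives $A_1\sim_L A_2$, hence in particular $A_1\sim_{\overline K}A_2$. The main technical subtlety is the representation-theoretic extraction of the twisting character $\chi$ from the tensor-square isomorphism: one has to rule out the outer-automorphism branch for $\mathfrak{sl}_{2g}$ (which only arises for $g\geq 2$) using the Weil self-duality, and one has to verify that $\chi$ factors through a finite quotient of $G_K$, which the determinant computation above provides.
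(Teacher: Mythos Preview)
Your $(\ref{P:isog1})\Rightarrow(\ref{P:isog2})$ matches the paper's: both extract a quadratic character from the Galois action on the rank-one module of $\overline K$-isogenies (the paper packages this via Lemma~\ref{isogeny-field} and a quadratic twist, but the content is the same).

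For $(\ref{P:isog2})\Rightarrow(\ref{P:isog1})$ you take a genuinely different route, and there is a gap. From $\rho_{A_1,\ell}^{\otimes 2}\cong\rho_{A_2,\ell}^{\otimes 2}$ and self-duality you correctly obtain a $G_K$-\emph{module} isomorphism $\mathfrak{sl}(V_\ell A_1)\cong\mathfrak{sl}(V_\ell A_2)$, but you then treat this as a \emph{Lie-algebra} isomorphism and invoke the structure of $\Aut(\mathfrak{sl}_{2g})$. A $G_K$-equivariant linear map has no a~priori reason to respect the bracket, so that classification does not directly apply. The gap is repairable: restrict to the $\mathfrak{sp}_{2g}$-summand (distinguished from its complement by dimension when $g\geq 2$), use Zariski density of the image in $\PGSp_{2g}$ to see that your linear isomorphism normalizes $\operatorname{Ad}(\PGSp_{2g})$ inside $\GL(\mathfrak{sp}_{2g})$, and compute that this normalizer is scalars times $\operatorname{Ad}(\PGSp_{2g})$ since $\operatorname{Out}(\mathfrak{sp}_{2g})=1$. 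That does yield the conjugacy of projective representations you want, but it is nontrivial work your write-up omits.

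The paper instead recycles Section~\ref{S3}: from $a_\fp(A_1)=\pm a_\fp(A_2)$ everywhere, Chebotarev and Corollary~\ref{corr-class} force $G_{A_1\times A_2}(\ell)\neq\Delta_{2g}(\F_\ell)$ for all $\ell\geq 5$; since the projections are full for large $\ell$ by openness, Corollary~\ref{corr-class-chrpoly} then gives $P_{A_1,\fp}(t)\in\{P_{A_2,\fp}(t),P_{A_2,\fp}(-t)\}$ for every good~$\fp$, and a cited result of Fit\'e concludes. Once repaired, your approach is more self-contained (ending with Faltings directly rather than an external reference); the paper's exploits machinery already in hand.
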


In order to prove the proposition above, we need to understand the action of the absolute Galois group $\Gal(\overline{K}/K)$ on isogenies between $\overline{K}$-isogenous abelian varieties, which is recorded in Lemma \ref{isogeny-field}. We omit the proof of the lemma, as it is a straightforward generalization of the proof in  \cite[Lemma 3.1, p.  4]{MR4088813}, which considers elliptic curves over number fields.

\begin{lemma}\label{isogeny-field}
 Let $A_1/K, A_2/K$ be principally polarized abelian varieties of a common dimension $g$ and each with a trivial geometric endomorphism ring. For any isogeny $\phi\colon A_1\to A_2$  defined  over $\overline{K}$, there exists a quadratic Galois character $\chi\colon\Gal(\overline{K}/K) \to \{\pm 1\}$, which we write as $\sigma\mapsto  \chi^{\sigma}$, such that 
 \[
 \phi^{\sigma}=[\chi^{\sigma}]\circ \phi,
 \]
 where $\phi^{\sigma}$ denotes the action of $\sigma$ on $\phi$ and $[n]$ denotes multiplication by $n$.
\end{lemma}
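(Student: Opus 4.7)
The plan is to generalize the elliptic-curve argument by exploiting that, under the hypothesis $\End(A_{i,\overline{K}})=\Z$, the group $\operatorname{Hom}(A_{1,\overline{K}},A_{2,\overline{K}})$ is a free $\Z$-module of rank one; once this is in place, the character drops out of a short direct computation.

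First I would establish the rank-one fact. The existence of $\phi$ forces $A_1$ and $A_2$ to be $\overline{K}$-isogenous, and the condition $\End(A_{i,\overline{K}})=\Z$ implies that each $A_i$ is geometrically simple, since any nontrivial product decomposition over $\overline{K}$ would produce nontrivial idempotents in the geometric endomorphism ring. Consequently $\End(A_{i,\overline{K}})\otimes\Q=\Q$, and $\operatorname{Hom}(A_{1,\overline{K}},A_{2,\overline{K}})\otimes\Q$ is a nonzero simple right module over $\End(A_{2,\overline{K}})\otimes\Q=\Q$, hence one-dimensional as a $\Q$-vector space. Since $\operatorname{Hom}(A_{1,\overline{K}},A_{2,\overline{K}})$ is finitely generated (Mumford) and torsion-free, it embeds as a finitely generated $\Z$-submodule of this $\Q$-line, and is therefore free of rank one. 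Fix a generator $\phi_0$.

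Next I would define the character. Because $A_1$ and $A_2$ are defined over $K$, the absolute Galois group $\Gal(\overline{K}/K)$ acts on $\operatorname{Hom}(A_{1,\overline{K}},A_{2,\overline{K}})=\Z\phi_0$. For each $\sigma\in\Gal(\overline{K}/K)$ write $\phi_0^{\sigma}=c(\sigma)\phi_0$ with $c(\sigma)\in\Z$. Applying $\sigma^{-1}$ gives $\phi_0=c(\sigma)c(\sigma^{-1})\phi_0$, so $c(\sigma)c(\sigma^{-1})=1$ in $\Z$ and thus $c(\sigma)\in\{\pm 1\}$. The cocycle computation
\[
\phi_0^{\sigma\tau}=(\phi_0^{\sigma})^{\tau}=(c(\sigma)\phi_0)^{\tau}=c(\sigma)c(\tau)\phi_0
\]
shows $c\colon\Gal(\overline{K}/K)\to\{\pm 1\}$ is a group homomorphism. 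Continuity is immediate: the isogeny $\phi_0$ is defined over some finite extension $L/K$, and $\Gal(\overline{K}/L)\subseteq\ker c$ is open.

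Finally, for an arbitrary isogeny $\phi\colon A_1\to A_2$ defined over $\overline{K}$, we write $\phi=k\phi_0$ with $k\in\Z$ and compute
\[
\phi^{\sigma}=k\phi_0^{\sigma}=c(\sigma)k\phi_0=[c(\sigma)]\circ\phi,
\]
which yields the lemma with $\chi:=c$. I expect the main obstacle to be merely the careful verification that $\operatorname{Hom}(A_{1,\overline{K}},A_{2,\overline{K}})$ is free of rank one over $\Z$ — this is where the PPAV/geometric simplicity hypothesis is used and where the elliptic-curve proof in \cite{MR4088813} has to be adapted to higher dimension; after that step, the rest of the argument is purely formal.
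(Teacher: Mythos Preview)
Your proof is correct and is precisely the ``straightforward generalization'' the paper alludes to: the paper omits the argument entirely, pointing to \cite[Lemma~3.1]{MR4088813} for elliptic curves, and your rank-one computation for $\operatorname{Hom}(A_{1,\overline{K}},A_{2,\overline{K}})$ together with the resulting $\{\pm 1\}$-valued cocycle is exactly that generalization. One minor remark: the principal polarization plays no role here---only the hypothesis $\End(A_{i,\overline{K}})=\Z$ is used, via geometric simplicity and the identification $\operatorname{Hom}(A_{1,\overline{K}},A_{2,\overline{K}})\otimes\Q\cong\End(A_{1,\overline{K}})\otimes\Q=\Q$ induced by $\phi$.
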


We now apply Lemma \ref{isogeny-field} to prove Proposition \ref{quadratic-iso}.

\begin{proof}[Proof of Proposition \ref{quadratic-iso}] 
\eqref{P:isog1} $\Rightarrow$ \eqref{P:isog2}: Fix a $\overline{K}$-isogeny $\phi\colon A_1\to A_2$.  By Lemma \ref{isogeny-field}, we obtain a  quadratic Galois character $\chi\colon\Gal(\overline{K}/K) \to \{\pm 1\}$. We  denote by $A_2^{\chi}$ the quadratic twist of $A_2$ by $\chi$. Then there is an isomorphism $f_{\chi}\colon A_2 \to  A_2^{\chi}$ over  a quadratic  extension of $K$ such that 
\begin{equation}\label{action-2}
f_\chi^{\sigma}=[\chi^{\sigma}]\circ f_{\chi} \quad \forall \sigma \in \Gal(\overline{K}/K),
\end{equation}
similar to the case of elliptic curves \cite[Theorem 2.2, p. 318]{MR2514094}. Thus we obtain  the following  $\overline{K}$-isogeny between $A_1$ and $A_2^{\chi}$,
\[
f_{\chi} \circ \phi\colon A_1 \to A_2 \to A_2^{\chi}.
\]
  By the open adelic image assumptions for $A_1$ and $A_2$ (which ensures that $A_1$ and $A_2$ have trivial geometric endomorphism rings, as deduced from \cite[Proposition 4]{MR2520382}), we apply Lemma \ref{isogeny-field} and 
 (\ref{action-2}) and obtain that for each $\sigma\in \Gal(\overline{K}/K)$, 
\[
(f_{\chi} \circ \phi)^{\sigma}=f_{\chi}^{\sigma} \circ \phi^{\sigma}=([\chi^{\sigma}]\circ f_{\chi})\circ  ([\chi^{\sigma}]\circ \phi)=f_{\chi} \circ \phi.
\]
Thus $A_1$ and $A_2^{\chi}$ are isogenous over $K$.  Hence applying Faltings's isogeny theorem 
(see, e.g.,  \cite[Proposition 3.4, p. 6]{MR2058653}), we conclude that for all primes $\fp$ of good reduction for $A_1$ and $A_2$, 
\[ a_\fp(A_1)=a_\fp(A_2^{\chi})=\chi(\Frob_\fp) a_\fp(A_2)=\pm a_\fp(A_2).\]

\eqref{P:isog2} $\Rightarrow$ \eqref{P:isog1}: Since $a_\fp(A_1) = \pm a_\fp(A_2)$ for all primes $\fp$ of good reduction for $A_1$ and $A_2$, the Chebotarev density theorem together with Corollary \ref{corr-class} imply  that $G_{A_1 \times A_2}(\ell) \neq \Delta_{2g}(\F_\ell)$ for all primes $\ell \geq 5$. By Corollary \ref{corr-class-chrpoly} and the assumption that $G_{A_1}$ and $G_{A_2}$ are open in $\GSp_{2g}(\widehat{\Z})$, we obtain that for each prime $\fp$ of good reduction, if $\ell$ is sufficiently large, then
\[
P_{A_1,\fp}(t) \equiv P_{A_2,\fp}(t) \pmod{\ell}
\quad \text{or} \quad
P_{A_1,\fp}(t) \equiv P_{A_2,\fp}(-t) \pmod{\ell}.
\]
Hence for all  primes $\fp$ of good reduction,
\[ P_{A_1,\fp}(t) \in
\{ P_{A_2,\fp}(t), P_{A_2,\fp}(-t) \}. \]
Thus by \cite[Corollary 2.6]{Fite2022}, $A_1$ and $A_2$ are $\overline{K}$-isogenous.
\end{proof}

From Proposition \ref{quadratic-iso}, we know that if $A_1$ and $A_2$ do not lie in the same $\overline{K}$-isogeny class, then there exists a prime $\fp$ of good reduction for $A_1$ and $A_2$ such that $a_\fp(A_1) \neq \pm a_\fp(A_2)$. We now use Proposition \ref{lemma-Serre-isogeny} together with the effective Chebotarev density theorem with avoidance in Corollary \ref{ECDT-cor}, to bound the least norm of a prime ideal $\fp$ with this property.

\begin{proof}[Proof of Theorem  \ref{effective-Faltings}]
We  apply Proposition \ref{lemma-Serre-isogeny} with  $\ell=\ell_g$, $\Gamma=\Gal(K(A[\ell_g^\infty])/K)$, and $\rho_i=\rho_{A_i, \ell_g}$, the $\ell_g$-adic Galois representations associated with $A_i$ for $i=1, 2$.  Based on the assumptions of the theorem and Proposition 
\ref{quadratic-iso}, we can find a Frobenius element $\Frob_{\fp_0}\in \Gal(\overline{K}/K)$ such that
\[
\tr \rho_1(\Frob_{\fp_0}) \neq \pm \tr \rho_2(\Frob_{\fp_0}).
\]

Now by  Proposition \ref{lemma-Serre-isogeny},  we obtain a finite quotient of $\Gamma$, which we view as the Galois group $G \coloneqq \Gal(L/K)$ of some Galois extension $L/K$ such that $[L:K]\leq (\ell_g^{8g^2} - 1)^2$. Moreover, we obtain a nonempty union of conjugacy classes $C_{\pm}\subseteq G$ such that for any prime $\fp$ with  $\left(\frac{L/K}{\fp}\right)\subseteq C_{\pm}$, we have  
\[
\tr \rho_{1}(\Frob_\fp)\neq \pm \tr \rho_{2}(\Frob_\fp).
\]

   According to the N\'{e}ron-Ogg-Shafarevich criterion  for abelian varieties \cite[Theorem 1, p. 493]{MR236190}, $L/K$ is unramified outside of prime divisors of $\ell_g N_{A_1}N_{A_2}$. Applying Corollary \ref{ECDT-cor} with the Galois extension $L/K$, union of conjugacy classes  $C_{\pm}$, and the integer $m=\rad(\ell_g N_{A_1}N_{A_2})$, we are given the existence a prime  $\fp$ not dividing $m$ such that $\left(\frac{L/K}{\fp}\right)\subseteq C_{\pm}$ and that
   \begin{equation*}
    N(\fp) \leq  (\tilde{a} \log d_{\tilde{L}} + \tilde{b} [\tilde{L}:K] + \tilde{c})^2,
\end{equation*} 
where $\tilde{L}=L(\sqrt{m})$ and where $\tilde{a},\tilde{b},\tilde{c}$ are absolute constants that can be taken to be $4, 2.5,$ and $5$, respectively, or can be taken to be the improved values given in  \cite[Table 1]{MR1355006} associated with $\tilde{L}$.
Since $\tilde{L}/K$ is Galois, by the following observations
\[
[\tilde{L}:K]\leq 2[L:K]\leq 2(\ell_g^{8g^2}-1)^2,
\]
\[
\rad d_{\tilde{L}}=\rad( d_{L} \cdot d_{K(\sqrt{m})}) \mid \rad (2\ell_g N_{A_1}N_{A_2}d_K),
\]
\[
\log \rad(\disc{L/K}) \leq \log \rad d_{\tilde{L}},
\]
and Lemma \ref{lem-logdK}, we obtain the upper bound
\begin{align*}
 \log d_{\tilde{L}} & \leq 2(\ell_g^{8g^2}-1)^2\left(\log d_K +  [K:\Q] \left(\log \left(\rad (2\ell_g N_{A_1}N_{A_2}d_K)\right)+ \log \left(2(\ell_g^{8g^2}-1)^2\right)\right)
\right).
\end{align*}

Recalling the notation (\ref{c-K-g}) and (\ref{c-K-A}) and consolidating these bounds, we conclude that there exists a prime $\fp$ whose norm is  coprime to $\ell_g N_{A_1} N_{A_2}$ such that  $a_\fp(A_1)\neq \pm a_\fp(A_2)$ and  satisfies
\[
N(\fp)\leq  \left(\tilde{a} \left(c(K, g)+  c(K, A_1, A_2)\right) + 2\tilde{b} (\ell_g^{8g^2}-1)^2  + \tilde{c}\right)^2,
\]
as claimed in the statement of the theorem. \end{proof}

\section{An effective open image theorem} \label{S5}
In this section, we prove Theorem \ref{main-thrm-1}, Corollary \ref{main-cor}, and Theorem \ref{main-thrm-2}. As before, we let $K$ be a number field and we consider principally polarized abelian varieties $A_1/K, \ldots, A_n/K$ of dimensions $g_1, \ldots, g_n$, respectively,  and each with open adelic image. The main goal is to bound $c(A_1 \times \cdots \times A_n)$.

\subsection{Reduction to the product of two abelian varieties} \label{reduce-to-two}

We move toward proving Theorem \ref{main-thrm-1} for a product of two abelian varieties. Before that, we justify our reduction to this special case. We fix a prime $\ell \geq 5$ and recall some  notation from Section \ref{sub-2.2}. We write $\pr_{i, j} : \Delta_{g_1, \ldots, g_n}(\Z_\ell) \twoheadrightarrow \Delta_{g_i, g_j}(\Z_\ell)$ for the projection map onto the $(i,j)$-component and  $[G,G]$ for the closure of the commutator subgroup of a profinite group $G$. 

\begin{lemma}\label{red-two}
Let $\ell\geq 5$ be a prime. Let $G$ be a closed subgroup of $\Delta_{g_1, \ldots, g_n}(\Z_{\ell})$.  Then, $G=\Delta_{g_1, \ldots, g_n}(\Z_{\ell})$ if and only if $\mult(G)= \Z_{\ell}^{\times}$ and  $\pr_{i, j}(G)=\Delta_{g_i, g_j}(\Z_{\ell})$ for each $1\leq i\neq j\leq n$.
\end{lemma}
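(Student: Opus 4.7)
The forward direction is immediate. For the reverse direction, the strategy is to establish the containment $\delta_{2g,n}(\F_\ell) \subseteq \bar G$ (where $\bar G \coloneqq G \bmod \ell$), then invoke the $n$-fold version of Proposition \ref{lift-SpSp} recorded in the remark immediately following it, and finally combine with the multiplier hypothesis.

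Reducing modulo $\ell$ turns the assumption into $\pr_{i,j}(\bar G) = \Delta_{2g}(\F_\ell)$ for every pair $i \neq j$. Consider the subgroup $H \coloneqq \bar G \cap \delta_{2g,n}(\F_\ell) \subseteq \Sp_{2g}(\F_\ell)^n$. A brief multiplier argument shows $\pr_{i,j}(H) = \Sp_{2g}(\F_\ell)^2$ for every pair: given $(\sigma,\tau) \in \Sp_{2g}(\F_\ell)^2$, lift to some $g \in \bar G$ with $\pr_{i,j}(g) = (\sigma,\tau)$; since $\mult(\sigma) = \mult(\tau) = 1$ and all components of $g$ share a common multiplier, every component of $g$ is symplectic, so $g \in H$. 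The core combinatorial claim is then that any subgroup of $\Sp_{2g}(\F_\ell)^n$ with all pair projections equal to $\Sp_{2g}(\F_\ell)^2$ is the full $n$-fold product. I would prove this by induction on $n$: the base case $n = 2$ is trivial, and for $n = 3$ one uses that $\PSp_{2g}(\F_\ell)$ is simple (which holds for $\ell \geq 5$) together with a Goursat argument to see that the componentwise image of $H$ in $\PSp_{2g}(\F_\ell)^3$ is the full group, followed by a counting argument employing the three pair projections and Lemma \ref{L:GSpProperties}(\ref{normal-Sp}) to rule out the possibility that the kernel of $\pr_{1,2}|_H$ is smaller than $\{I\}^2 \times \Sp_{2g}(\F_\ell)$. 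The step from $n-1$ to $n \geq 4$ then follows by applying the $n = 3$ case to every triple $\pr_{1,i,j}(H)$, which shows that $H \cap (\{I\} \times \Sp_{2g}(\F_\ell)^{n-1})$ projects surjectively onto each pair of the last $n-1$ coordinates and so, by induction, equals $\{I\} \times \Sp_{2g}(\F_\ell)^{n-1}$; combined with $\pr_1(H) = \Sp_{2g}(\F_\ell)$, this forces $H = \Sp_{2g}(\F_\ell)^n$.

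With $\delta_{2g,n}(\F_\ell) \subseteq \bar G$ in hand, the $n$-fold analogue of Proposition \ref{lift-SpSp} lifts this containment to $\delta_{2g,n}(\Z_\ell) \subseteq G$. Finally, the hypothesis $\mult(G) = \Z_\ell^\times$ together with the exact sequence
\[ 1 \to \delta_{2g,n}(\Z_\ell) \to \Delta_{2g,n}(\Z_\ell) \xrightarrow{\mult} \Z_\ell^\times \to 1 \]
forces $G = \Delta_{2g,n}(\Z_\ell)$. The principal obstacle is the combinatorial claim in the second paragraph: the central $\{\pm I\}$ in each copy of $\Sp_{2g}(\F_\ell)$ admits various nontrivial diagonal-type subgroups of $\{\pm I\}^n$, and ruling these out requires carefully combining the pair projection hypothesis with the simplicity of $\PSp_{2g}(\F_\ell)$ and the lower bound on $|\PSp_{2g}(\F_\ell)|$ for $\ell \geq 5$.
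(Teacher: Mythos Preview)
Your approach is correct but follows a different route from the paper. The paper works entirely at the $\Z_\ell$-level: it passes to the closure $X$ of the commutator subgroup of $G$, observes that $\pr_{i,j}(X) = \delta_{2g}(\Z_\ell)$ for every pair (using that $\Sp_{2g}(\Z_\ell)$ is perfect for $\ell \geq 5$), and then invokes \cite[Lemma~3.3]{MR819838} to conclude $X = \delta_{2g,n}(\Z_\ell)$. Combined with $\mult(G) = \Z_\ell^\times$, this gives $G = \Delta_{2g,n}(\Z_\ell)$ in one stroke, with no reduction modulo $\ell$ and no appeal to the lifting remark after Proposition~\ref{lift-SpSp}.

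Your route---reduce mod $\ell$, prove $\delta_{2g,n}(\F_\ell) \subseteq \bar G$ by a direct Goursat-plus-simplicity induction, then lift via the $n$-fold Proposition~\ref{lift-SpSp}---is essentially a self-contained reproof of the relevant special case of Ribet's lemma at the residue-field level, followed by the lifting step the paper records but does not itself use here. The counting argument you outline for $n=3$ does work: once the image of $H$ in $\PSp_{2g}(\F_\ell)^3$ is full (which your Goursat argument gives, since a graph over one coordinate would violate surjectivity of another pair projection), one has $|H| \ge |\Sp_{2g}(\F_\ell)|^3/8$, while $\ker(\pr_{1,2}|_H) \subsetneq \{I\}^2 \times \Sp_{2g}(\F_\ell)$ would force $|H| \le 2|\Sp_{2g}(\F_\ell)|^2$; these are incompatible once $|\Sp_{2g}(\F_\ell)| > 16$, which holds for $\ell \ge 5$. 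Your induction step is also fine: from $\pr_{1,i,j}(H) = \Sp_{2g}(\F_\ell)^3$ one extracts, for any $(\sigma,\tau)$ in coordinates $i,j \ge 2$, an element of $H$ with first coordinate $I$, which is exactly what feeds the inductive hypothesis on $H \cap (\{I\} \times \Sp_{2g}(\F_\ell)^{n-1})$. The trade-off is that the paper's proof is a two-line citation, whereas yours is longer but avoids the external reference and makes transparent exactly where $\ell \ge 5$ (simplicity of $\PSp_{2g}(\F_\ell)$ and the order bound) enters.
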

\begin{proof}
The forward direction is clear. For the reverse direction, since $\mult(G) = \Z_\ell^\times$, it suffices to prove that  $[G, G]=\delta_{g_1, \ldots, g_n}(\Z_{\ell})$. Indeed, we have $[\pr_{i, j}(G), \pr_{i, j}(G)]=\delta_{g_i, g_j}(\Z_{\ell})$ and for each $\ell \geq 5$ and $1\leq i\leq n$,  the group $\Sp_{2g_i}(\Z_{\ell})$ has no nontrivial abelian quotients since its abelianization is trivial by \cite[Proposition 1, Part (a)]{MR3667841}. Thus, based on the argument from the proof of 
\cite[Lemma 3.3, pp. 252--253]{MR419358}, it follows that $[G, G]=\delta_{g_1, \ldots, g_n}(\Z_{\ell})$.
\end{proof}

 Recall that for a product $A_i\times A_j$, the constant $c(A_i\times A_j)$ is defined by \eqref{def-c(.)}. By Lemma \ref{red-two}, we conclude  that for each prime $\ell$ such that 
\[
\ell>\max_{1\leq i\neq j\leq n}\{ c(A_i\times A_j)\},
\]
(provided $c(A_i\times A_j)< \infty$) the image $G_{A_1\times \cdots \times A_n, \ell}$ equals $\Delta_{g_1, \ldots, g_n}(\Z_{\ell})$. As such,
\begin{equation}\label{n-2-lemma}
c(A_1\times \cdots \times A_n)\leq \max_{1\leq i\neq j\leq n}\{ c(A_i\times A_j)\}.
\end{equation}
Thus in order to bound $c(A_1\times \cdots \times A_n)$ we  only need to bound $c(A_i\times A_j)$ for each $1 \leq i \ne j \leq n$.

We end with  a proposition that will be used in subsequent sections to prove the openness of the $\ell$-adic images $G_{A_1\times A_2, \ell}$ for all $\ell$, provided $A_1$ and $A_2$ are not geometrically isogenous. For simplicity, we write  $X_{\ell} \coloneqq [G_{A_1\times A_2, \ell}, G_{A_1\times A_2, \ell}]$. The result is a generalization of \cite[Lemma 7, p. 325]{MR387283} and is likely already known to experts. It can be proved by applying \cite[Lemma 2.14 (ii)]{MR1324634} to get that the $\ell$-adic  Lie algebra of $X_\ell$ is isomorphic to $\mathfrak{sp}_{2g_1, \Q_\ell}\times\mathfrak{sp}_{2g_2, \Q_\ell}$. 

\begin{proposition}\label{all-ell-adic-open} Let $A_1/K$, $A_2/K$ be two abelian varieties of  dimensions $g_1$ and $g_2$, respectively. Assume that $A_1$ and  $A_2$ are  not $\overline{K}$-isogenous.  
If the $\ell$-adic images $G_{A_i, \ell}$ of $A_i$ are open in $\GSp_{2g_i}(\Z_{\ell})$ for all primes $\ell$ and each $i\in \{1, 2\}$,  then $X_{\ell}$ is open in $\delta_{g_1, g_2}(\Z_{\ell})$ for all primes $\ell$.
\end{proposition}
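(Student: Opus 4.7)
The plan is to reduce the claim that $X_\ell$ is open in $\delta_{2g}(\Z_\ell)$ to a statement about associated $\Q_\ell$-Lie algebras. Write $\mathfrak{x}_\ell$ for the Lie algebra of the $\ell$-adic Lie group $X_\ell$; since $\delta_{2g}(\Z_\ell) = \Sp_{2g}(\Z_\ell) \times \Sp_{2g}(\Z_\ell)$ has Lie algebra $\mathfrak{sp}_{2g}(\Q_\ell) \times \mathfrak{sp}_{2g}(\Q_\ell)$, openness of $X_\ell$ in $\delta_{2g}(\Z_\ell)$ is equivalent to $\mathfrak{x}_\ell = \mathfrak{sp}_{2g}(\Q_\ell) \times \mathfrak{sp}_{2g}(\Q_\ell)$. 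From the hypothesis that each $G_{A_i,\ell}$ is open in $\GSp_{2g}(\Z_\ell)$, the closure of its commutator is open in $\Sp_{2g}(\Z_\ell)$ (as in the argument preceding Lemma \ref{red-two}), and hence each projection $\pr_i(\mathfrak{x}_\ell)$ equals $\mathfrak{sp}_{2g}(\Q_\ell)$.

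The next step is to classify the Lie subalgebras $\mathfrak{x} \subseteq \mathfrak{sp}_{2g}(\Q_\ell) \times \mathfrak{sp}_{2g}(\Q_\ell)$ whose projections to both factors are surjective. By the Lie-algebra analogue of Goursat's lemma combined with the simplicity of $\mathfrak{sp}_{2g}(\Q_\ell)$, the kernel of $\pr_i|_{\mathfrak{x}}$ is an ideal of $\mathfrak{sp}_{2g}(\Q_\ell)$, hence is either $0$ or all of $\mathfrak{sp}_{2g}(\Q_\ell)$. This leaves exactly two cases: either $\mathfrak{x}$ is the full product (the desired conclusion), or $\mathfrak{x}$ is the graph of an isomorphism $\phi \colon \mathfrak{sp}_{2g}(\Q_\ell) \to \mathfrak{sp}_{2g}(\Q_\ell)$. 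The remaining work is to rule out the graph case using the non-isogeny hypothesis.

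To dispose of the graph case, I would invoke the cited fact that every automorphism of $\mathfrak{sp}_{2g}(\Q_\ell)$ is conjugation by an element of $\PSp_{2g}(\Q_\ell)$, so $\phi = \operatorname{Ad}(x)$ for some $x \in \Sp_{2g}(\Q_\ell)$, well defined up to $\{\pm I\}$. Because $\mathfrak{x}_\ell$ is stable under the diagonal $\Gal(\overline{K}/K)$-action, the element $\sigma(x) x^{-1}$ centralizes $\mathfrak{sp}_{2g}(\Q_\ell)$ for every $\sigma$, and hence lies in $\{\pm I\}$. Thus $\sigma \mapsto \sigma(x) x^{-1}$ defines a quadratic character $\chi \colon \Gal(\overline{K}/K) \to \{\pm 1\}$. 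Viewing $x$ as a $\Q_\ell$-linear isomorphism $V_\ell(A_1) \to V_\ell(A_2)$ of rational Tate modules, it intertwines the two Galois actions up to $\chi$; after replacing $A_2$ by the quadratic twist $A_2^\chi$, one obtains a genuine Galois-equivariant isomorphism $V_\ell(A_1) \to V_\ell(A_2^\chi)$. Faltings's isogeny theorem then produces a $K$-isogeny $A_1 \to A_2^\chi$, and hence a $\overline{K}$-isogeny $A_1 \to A_2$, contradicting the hypothesis.

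The main obstacle will be the last step: managing the quadratic-twist bookkeeping so that a Lie-algebra-level isomorphism actually yields a nonzero homomorphism of abelian varieties. This twist-character manipulation is exactly the mechanism already deployed in Lemma \ref{isogeny-field} and in the proof of Proposition \ref{quadratic-iso}, so I would import that framework to close the argument cleanly. A secondary point to verify is that the Goursat/simplicity reduction continues to be valid for small primes $\ell$; since $\mathfrak{sp}_{2g}$ is a simple Lie algebra over any field of characteristic zero, this causes no trouble for $\Q_\ell$-coefficients.
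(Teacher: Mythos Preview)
Your proposal is correct and follows essentially the same route that the paper indicates: the paper omits the proof but points to Serre's Lemme~7 (p.~325 of \cite{MR387283}), whose generalization is precisely the Lie-algebra Goursat argument you sketch, together with the cited fact that every automorphism of $\mathfrak{sp}_{2g}(\Q_\ell)$ is inner and Faltings's isogeny theorem to eliminate the graph case via a quadratic twist. One small point of precision: the conjugating element $x$ is a priori only in $\GSp_{2g}(\Q_\ell)$ (equivalently, in $\PSp_{2g}(\Q_\ell)$ viewed as the adjoint group) rather than in $\Sp_{2g}(\Q_\ell)$, but since $\mult(\rho_{A_1,\ell}(\sigma)) = \mult(\rho_{A_2,\ell}(\sigma))$ one still gets $\mult(\sigma(x)x^{-1}) = 1$ and hence $\sigma(x)x^{-1} \in \{\pm I\}$, so the quadratic-character step goes through unchanged.
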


\subsection{Proof of Theorem \ref{main-thrm-1}}\label{product-two} We now prove Theorem \ref{main-thrm-1}. By \eqref{n-2-lemma}, we can (and do) reduce our consideration to the case of $n = 2$. 

\begin{proof} The equivalence is proven by showing (\ref{main-thm-isogenous}) $\Leftrightarrow$ (\ref{main-thm-ell-adic-image}), (\ref{main-thm-adelic-image}) $\Rightarrow$ (\ref{main-thm-isogenous}), and (\ref{main-thm-ell-adic-image}) $\Rightarrow$ (\ref{main-thm-adelic-image}). 
\indent

(\ref{main-thm-isogenous}) $\Leftrightarrow$ (\ref{main-thm-ell-adic-image}):
 Assume (\ref{main-thm-isogenous})  holds.  We can invoke  Theorem \ref{effective-Faltings} to find a positive integer $B$ and a prime $\fp\in \Sigma_K$ that is a prime of good reduction for $A_1$ and $A_2$,  with norm $N(\fp)\leq B$, and  such that
   $a_{\fp}(A_1)\neq \pm a_\fp(A_2)$. Note that for any prime $\ell$,
\[
\ell \mid (a_{\fp}(A_1) - a_{\fp}(A_2))(a_{\fp}(A_1) + a_{\fp}(A_2)) \; \implies \; \ell \mid (a_{\fp}(A_1) - a_{\fp}(A_2))\, \text{ or }\, \ell \mid (a_{\fp}(A_1) + a_{\fp}(A_2)).
\]
Thus by Weil's bound stated in (\ref{weil-bound}), if $\ell$ divides $(a_{\fp}(A_1) - a_{\fp}(A_2))(a_{\fp}(A_1) + a_{\fp}(A_2))$, then
\[
\ell \leq \max(a_{\fp}(A_1) - a_{\fp}(A_2), a_{\fp}(A_1) + a_{\fp}(A_2)) \leq 4g \sqrt{N(\fp)}\leq 4g \sqrt{B}.
\]
In particular, for any $\ell>4g \sqrt{B}$,
\[
a_{\fp}(A_1)\not\equiv \pm a_{\fp}(A_2) \pmod \ell.
\]
Moreover, if $\ell$ also satisfies  $\ell>c(A_1)$ and $\ell>c(A_2)$, then \[
\pr_1(G_{A_1\times A_2}(\ell))=G_{A_1}(\ell)=\GSp_{2g}(\F_{\ell}) \quad \text{and} \quad \pr_2(G_{A_1\times A_2}(\ell))=G_{A_2}(\ell)=\GSp_{2g}(\F_{\ell}).
\]
By Corollary \ref{corr-class}, we obtain
that $G_{A_1\times A_2}( \ell)$ equals $\Delta_{2g}(\F_{\ell})$. Applying Proposition \ref{lift-SpSp}, for each $\ell>\max\{4g \sqrt{B}, c(A_1), c(A_2)\}$, the $\ell$-adic Galois image $G_{A_1\times A_2, \ell}$ equals  $\Delta_{2g}(\Z_{\ell})$. As a result, 
\begin{align}\label{c-pairbound}
c(A_1\times A_2) &\leq \max\{4g \sqrt{B}, c(A_1), c(A_2)\}.
\end{align}
Using the value for $B$ given by Theorem \ref{effective-Faltings}, by \eqref{n-2-lemma} and (\ref{c-pairbound}), we obtain the bound in (\ref{main-thm-ell-adic-image}).

 Conversely, assuming (\ref{main-thm-ell-adic-image}) holds, we  fix a  sufficiently large  prime $\ell$ such that $G_{A_1\times A_2,\ell}=\Delta_{2g}(\Z_{\ell})$. Then by reducing modulo $\ell$, we get  $G_{A_1\times A_2}(\ell)=\Delta_{2g}(\F_{\ell})$. This implies that
 \[
 G_{A_1}(\ell)=\pr_1(G_{A_1\times A_2}(\ell))=\GSp_{2g}(\F_{\ell}) \; \text{ and } \;  G_{A_2}(\ell)=\pr_2(G_{A_1\times A_2}(\ell))=\GSp_{2g}(\F_{\ell}).
 \]
 Then, by Corollary \ref{corr-class}, there exists an element $\gamma \in G_{A_1\times A_2}(\ell)$ such that 
$\tr \pr_1(\gamma)\neq \pm \tr \pr_2(\gamma)$. Thus, by the Chebotarev density theorem, we obtain a prime $\fp$ of good reduction for $A_1$ and $A_2$ such that
\[
a_\fp(A_1) \neq  \pm a_\fp(A_2).
\]
By Proposition \ref{quadratic-iso},  we derive (\ref{main-thm-isogenous}). 

 (\ref{main-thm-adelic-image}) $\Rightarrow$ (\ref{main-thm-isogenous}): Since $G_{A_1 \times A_2}$ is open in $\Delta_{2g}(\widehat{\Z})$, we have that
\[
 \prod_{\ell}[\Delta_{2g}(\Z_{\ell}):G_{A_1\times A_2, \ell}]\leq [\Delta_{2g}(\widehat{\Z}): G_{A_1\times A_2}]< \infty.
\]
Hence, for all $\ell$ sufficiently large, $\Delta_{2g}(\Z_{\ell})=G_{A_1\times A_2, \ell}$.
Thus, by Corollary \ref{corr-class}, if $\ell$ is sufficiently large,  there exists a prime $\fp$ of good reduction for $A_1$ and $A_2$ such that
\[
a_\fp(A_1)\not\equiv \pm a_\fp(A_2) \pmod {\ell}.
\]
For such a prime $\fp$, we have $a_\fp(A_1)\neq \pm a_\fp(A_2)$. Now by Proposition \ref{quadratic-iso}, we obtain (\ref{main-thm-isogenous}).

(\ref{main-thm-ell-adic-image}) $\Rightarrow$ (\ref{main-thm-adelic-image}): 
 We prove (\ref{main-thm-adelic-image}) by checking the three criteria in Proposition \ref{group-open}. The first condition in the proposition is obviously fulfilled. The third condition holds since $\mult(G_{A_1 \times A_2}) = \mult(G_{A_1})$ is open in $\widehat{\Z}^\times$. We now address the second condition. By the equivalence (\ref{main-thm-ell-adic-image}) $\Leftrightarrow$ (\ref{main-thm-isogenous}), we know that   $A_1$ and $A_2$ are not $\overline{K}$-isogenous. So  we can apply Proposition \ref{all-ell-adic-open} and conclude that 
$
X_\ell$ is open in $\delta_{2g}(\Z_{\ell})
$ 
for each prime $\ell$.  Because $\mult$ is surjective, we obtain that  $G_{A_1\times A_2, \ell}$ is open in $\Delta_{2g}(\Z_{\ell})$. 
 This completes the proof of (\ref{main-thm-adelic-image}).
\end{proof}

\subsection{Proof of Corollary \ref{main-cor}} \label{Sub6}

We now consider the particular case of a product of  elliptic curves over $\mathbb{\Q}$. The corollary we aim to prove is an application of Theorem \ref{main-thrm-1}, which provides an explicit conditional bound on $c(A_1\times \cdots \times A_n)$  for pairwise non $\overline{\Q}$-isogenous  elliptic curves $A_1/\Q,\ldots, A_n/\Q$ without complex multiplication.

\begin{proof}[Proof of Corollary \ref{main-cor}] By Serre's open image theorem, the assumption that $A_i$ is without complex multiplication implies that $G_{A_i}$ is open in $\GL_2(\widehat{\Z})$ for each $1 \leq i \leq n$. Thus, by Theorem \ref{main-thrm-1}, it remains only to verify that the bound in \eqref{ell-adic-cor} holds, assuming \eqref{isogeny-cor}. Applying Theorem \ref{main-thrm-1} with $g=1$, $\ell_g = 3$, $\tilde{a} = 4$, $\tilde{b} = 2.5$, and $\tilde{c} = 5$ gives that
\[
c(A_1 \times \cdots \times A_n) 
\leq \max_{1 \leq i \neq j \leq n}
\{ 1377075200 \log \rad(6 N_{A_i} N_{A_j}) + 26020715799, c(A_i) \}.
\]
From (\ref{ec-bound}), we see that $c(A_i)$ is less than $1377075200 \log \rad(6 N_{A_i} N_{A_j}) + 26020715799$, so the claim follows.
\end{proof}

\subsection{Effective open image theorem for abelian varieties with differing dimensions} \label{S:DiffDim}

In this brief subsection, we consider the case of abelian varieties of different dimensions.
Recall that by Lemma \ref{red-two}, we may focus  on the case of two abelian  varieties.

\begin{theorem} \label{main-thrm-2} Let $A_1/K$ and $A_2/K$ be principally polarized abelian varieties of distinct dimensions $g_1$ and $g_2$, respectively.  Assume that the adelic Galois image $G_{A_i}$  is open in $\GSp_{2g_i}(\widehat{\Z})$ for each $i\in \{1, 2\}$. Then the adelic Galois image $G_{A_1 \times  A_2}$ is an open subgroup of $\Delta_{g_1, g_2}(\widehat{\Z})$ and $c(A_1 \times A_2) \leq \max\{3,c(A_1), c(A_2)\}. $
\end{theorem}
\begin{proof} By the assumption that $G_{A_i}$ is an open subgroup of $\GSp_{2g_i}(\widehat{\Z})$, we have that $c(A_i)$ exists for each $i$. By Lemma \ref{group-lemma-2}, for each prime  $\ell$ such that $\ell>\max\{3, c(A_1), c(A_2)\}$,   the image of $\overline{\rho}_{A_1 \times A_2, \ell}$ is  $\Delta_{g_1, g_2}(\F_\ell)$. Thus the inequality $c(A_1 \times A_2) \leq \max\{3,c(A_1), c(A_2)\}$ follows from Proposition \ref{lift-SpSp}. 

It remains to prove that $G_{A_1 \times A_2}$ is an open subgroup of $\Delta_{g_1,g_2}(\widehat{\Z})$. It suffices to check that $G_{A_1 \times A_2}$ satisfies the conditions in Proposition \ref{group-open}. Conditions \eqref{red-ell-full} and \eqref{mult-open} are clear. For \eqref{all-ell-open},  we apply Proposition \ref{all-ell-adic-open} to conclude that 
$
X_\ell$ is open in $\delta_{g_1, g_2}(\Z_{\ell})
$ 
for each prime $\ell$.  It follows that  $G_{A_1\times A_2, \ell}$ is open in $\Delta_{g_1, g_2}(\Z_{\ell})$ for each prime $\ell$. 
\end{proof}

\section{An Algorithm and Numerical Examples}\label{S6}

We now give an algorithm for bounding the largest nonsurjective prime associated with the product of two non-isogenous Jacobians of hyperelliptic curves, each with open adelic image in the corresponding general symplectic group. While Theorem \ref{main-thrm-1} and Corollary \ref{main-cor} could be used for this purpose, the algorithm below provides a dramatically better bound in practice. After giving the algorithm, we provide three numerical examples.

\begin{algorithm} \label{alg} Given a positive integer $B$ and hyperelliptic curves $C_1$ and $C_2$, output a nonnegative integer $\Lambda$ with the property that if $\ell$ is a prime such that $\bar{\rho}_{\Jac(C_1) \times \Jac(C_2), \ell}$ is nonsurjective, then
\begin{equation} \label{E:Alg}
\ell \in \{ 2,3\} \cup \{ \ell : \ell \text{ divides } \Lambda \} \cup 
\{ \ell : \bar{\rho}_{\Jac(C_1), \ell} \text{ or } \bar{\rho}_{\Jac(C_2), \ell} \text{ is nonsurjective}
\}
\end{equation}
as follows:
\begin{enumerate}
    \item If $\operatorname{genus}(C_1) \neq \operatorname{genus}(C_2)$, then ouput $1$. 
    \item Let $\Lambda \coloneqq 0$.
    \item For each prime $p \leq B$ of good reduction for $C_1$ and $C_2$, do the following.
    \begin{enumerate}
        \item Compute the Frobenius trace $a_{p}(C_i)$ associated with $C_i$ for each $i = 1,2$.
        \item Let $M \coloneqq (a_{p}(C_1) - a_{p}(C_2))(a_{p}(C_1) + a_{p}(C_2))$.
        \item Let $\Lambda \coloneqq \gcd(\Lambda,M)$.
    \end{enumerate}
    \item Output $\Lambda$.
\end{enumerate}
\end{algorithm}

We have implemented Algorithm \ref{alg} in SageMath \cite{sagemath} as the function \texttt{FindLambda}, available in this paper's GitHub repository (linked in the introduction). The algorithm  terminates because it completes a finite computation for each of the finitely many primes bounded above by $B$. Regardless of the value of $B$, Corollary \ref{corr-class} and Lemma \ref{group-lemma-2} imply that $\Lambda$ has the claimed property, namely that if $\ell$ is a prime such that $\bar{\rho}_{\Jac(C_1) \times \Jac(C_2), \ell}$ is nonsurjective, then \eqref{E:Alg} holds. However, a larger bound $B$ may lead to a smaller (or nonzero) value of $\Lambda$. We often use the value of $B = 1000$.  While a much larger bound would be needed to guarantee that $\Lambda$ is nonzero via Theorem \ref{effective-Faltings}, we found that the value of $B = 1000$ works well in practice.

Before starting the examples, we first give a group-theoretic lemma that allows us to check whether a given subgroup of $\Delta_{2}(\F_\ell)$ is the full group $\Delta_{2}(\F_\ell)$ for each $\ell\in \{2, 3\}$. This is useful when considering the surjectivity of the mod 2 and 3 Galois representations associated with a product of elliptic curves. Given a matrix $\gamma \in \GL_2(\F_\ell)$, we write $\dim_1 \gamma \coloneqq \dim_{\F_\ell} \ker(\gamma-I)$.

\begin{lemma} \label{L:smallprimes} Suppose that $\ell \in \set{2,3}$ and $G \subseteq \Delta_2(\F_\ell)$ is a subgroup that $\pr_i(G) = \GL_2(\F_\ell)$ for each $i \in \set{1,2}$. Then $G = \Delta_2(\F_\ell)$ if and only if there exists an element $(\gamma_1,\gamma_2) \in G$ with
\begin{align*} 
(\dim_1 \gamma_1, \dim_1 \gamma_2) &\in \set{(0,1),(1,0),(1,2),(2,1)} && \text{if } \ell = 2  \\
(\dim_1 \gamma_1, \dim_1 \gamma_2) &\in \set{(1,2),(2,1)} &&  \text{if } \ell = 3.
\end{align*}
\end{lemma}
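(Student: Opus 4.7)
The forward implication is easy: for either $\ell$, the pair $(I,u)$ with $u$ a nontrivial element of $\SL_2(\F_\ell)$ having $1$ as a repeated eigenvalue (for $\ell=3$ a nontrivial unipotent matrix; for $\ell=2$ a transposition-type involution) has matching multiplier, lies in $\Delta_2(\F_\ell)=G$, and realizes $(\dim_1 g_1,\dim_1 g_2)=(2,1)$.

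For the reverse direction, the plan is to apply Goursat's lemma (Lemma \ref{Goursat}) to write $G = \GL_2(\F_\ell) \times_{(\phi_1,\phi_2)} \GL_2(\F_\ell)$ for some common quotient $Q$ and then classify the possible $\ker\phi_i$. Since $(g,I)\in G$ for each $g\in\ker\phi_1$ and $G\subseteq\Delta_2(\F_\ell)$, we must have $\det g=1$, i.e.\ $\ker\phi_i\subseteq\SL_2(\F_\ell)$. For $\ell=2$ the normal subgroups of $\GL_2(\F_2)\cong S_3$ are $\{I\}$, $\SL_2(\F_2)$ (the unique subgroup of order $3$), and $\GL_2(\F_2)$. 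For $\ell=3$ the normal subgroups of $\GL_2(\F_3)$ that lie inside $\SL_2(\F_3)$ are $\{I\}$, the center $\{\pm I\}$, the quaternion Sylow $2$-subgroup $Q_8$ of $\SL_2(\F_3)$, and $\SL_2(\F_3)$ itself; this can be extracted from the isomorphism $\SL_2(\F_3)/\{\pm I\}\cong A_4$ together with the observation that the unique Sylow $2$-subgroup of $\SL_2(\F_3)$ is automatically $\GL_2(\F_3)$-stable (since $\GL_2(\F_3)$ normalizes $\SL_2(\F_3)$). An order count via $|G|=|\GL_2(\F_\ell)|^2/|Q|$ then shows that $G=\Delta_2(\F_\ell)$ if and only if $Q\cong\F_\ell^\times$, equivalently $\ker\phi_i=\SL_2(\F_\ell)$ for each $i$.

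The conclusion is a case analysis eliminating every other Goursat case. For $\ell=3$: the existence of $(g_1,g_2)\in G$ with $(\dim_1 g_1,\dim_1 g_2)\in\{(1,2),(2,1)\}$ forces one coordinate to be $I$ and the other to be an element of $\SL_2(\F_3)\setminus\{I\}$ with $1$ as a repeated eigenvalue, hence a nontrivial unipotent matrix of order $3$. This element lies in the relevant $\ker\phi_i$, but none of $\{I\}$, $\{\pm I\}$, $Q_8$ contains an element of order $3$, so $\ker\phi_i=\SL_2(\F_3)$ and hence $G=\Delta_2(\F_3)$. For $\ell=2$: when $\ker\phi_i=\{I\}$, the composition $\phi_2^{-1}\circ\phi_1$ is an automorphism of $S_3$ and therefore inner, so $G$ is the graph of conjugation by some $\sigma\in\GL_2(\F_2)$ and every $(g_1,g_2)\in G$ satisfies $\dim_1 g_1=\dim_1 g_2$; when $\ker\phi_i=\SL_2(\F_2)$, both $\phi_i$ are the sign character, so any $(g_1,g_2)\in G$ has $g_1,g_2$ of the same parity, giving only pairs with $(\dim_1 g_1,\dim_1 g_2)\in\{0,2\}^2\cup\{(1,1)\}$. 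In each of these two cases the four listed pairs are excluded, completing the contrapositive.

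The main technical obstacle is the classification of normal subgroups of $\GL_2(\F_3)$ inside $\SL_2(\F_3)$ — in particular justifying that $Q_8$ is a genuine Goursat kernel and that nothing has been omitted — after which the remaining work is a direct combinatorial check that the dimension data separates the full group $\Delta_2(\F_\ell)$ from each smaller Goursat fiber product.
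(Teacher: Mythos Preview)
Your Goursat-based argument is essentially correct and takes a genuinely different route from the paper, which simply verifies the statement by an exhaustive Magma computation (the groups $\Delta_2(\F_2)$ and $\Delta_2(\F_3)$ have only $36$ and $1152$ elements). Your proof has the merit of explaining \emph{why} the listed dimension pairs separate $\Delta_2(\F_\ell)$ from each smaller fiber product, and it nicely parallels the paper's treatment of $\ell\ge 5$ in Proposition~\ref{prop-class}. One notational slip to fix: for $\ell=2$ you identify ``$\SL_2(\F_2)$'' with the order-$3$ normal subgroup of $\GL_2(\F_2)\cong S_3$, but in fact $\SL_2(\F_2)=\GL_2(\F_2)$ since $\F_2^\times=\{1\}$; the index-$2$ subgroup you want is the alternating group $A_3$. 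This does not damage the logic---your three kernel cases ($\{I\}$, the order-$3$ subgroup, the full group) and the accompanying dimension checks are correct---but the labeling is internally inconsistent: you first describe ``$\SL_2(\F_2)$'' as having order $3$, then (correctly) need $\ker\phi_i$ to equal the full group for $G=\Delta_2(\F_2)$, and finally in the case analysis treat ``$\ker\phi_i=\SL_2(\F_2)$'' as the kernel of the sign character.
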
 
\begin{proof} Checked in Magma by calling \texttt{DistinguishedDims} in this paper's GitHub repository. 
\end{proof}

\begin{example} \label{E:ECs} Let $E_1$ and $E_2$ be the elliptic curves with LMFDB \cite{lmfdb} labels \texttt{208.c2} and \texttt{988.c1}, respectively. These curves are given, respectively, by the Weierstrass equations
\begin{align*}
    E_1: y^2 &= x^3+x+10 \\
    E_2: y^2 &=x^3-362249x+165197113.
\end{align*}
We will determine the set of all primes $\ell$ such that $\bar{\rho}_{E_1 \times E_2, \ell}$  is nonsurjective (i.e., such that the image of  $\bar{\rho}_{E_1 \times E_2, \ell}$  is a proper subgroup of  $\Delta_2(\F_{\ell})$). From LMFDB  pages for $E_1$ and $E_2$, we read that
\begin{align*}
    \set{\ell \text{ prime} : \bar{\rho}_{E_1,\ell} \text{ is nonsurjective}} &= \set{2}, \\
    \set{\ell \text{ prime} : \bar{\rho}_{E_2,\ell} \text{ is nonsurjective}} &= \emptyset.
\end{align*}
The curves $E_1$ and $E_2$ have good reduction at $17$. Their Frobenius traces at $17$ are
\[ a_{17}(E_1) = 6 \quad \text{and} \quad a_{17}(E_2) = -7.\]
Hence if $\ell \neq 13$, then
\[
a_{17}(E_1) \not\equiv a_{17}(E_2) \pmod{\ell} \quad \text{and} \quad a_{17}(E_1) \not\equiv -a_{17}(E_2) \pmod{\ell}.
\]
Thus, by Corollary \ref{corr-class}, 
\[
\set{\ell \text{ prime} : \bar{\rho}_{E_1 \times E_2, \ell} \text{ is nonsurjective}} \subseteq \set{2,3,13}.
\]

We have that $E_1$ is the quadratic twist of the curve  \texttt{52.a2} (given by $y^2=x^3+x-10$) by the quadratic character
\[ \chi \colon \Gal(\overline{\Q}/\Q) \longrightarrow \set{\pm 1},\]
uniquely determined by $\chi(\Frob_p)=\p{\frac{-1}{p}}$ for any prime $p$, where $\p{\frac{-1}{p}}$ denotes the Legendre symbol. It is known that \texttt{52.a2} is $13$-congruent to \texttt{988.c1} (see \cite[Example 8.2.]{Fisher13Congruent} or \cite[p.\ 44, Table 5.3]{MR1672093}). Thus, for each prime $p$ of good reduction for $E_1$ and $E_2$, we have that
\[ a_p(E_1) \equiv \p{\frac{-1}{p}} a_p(E_2) \pmod{13}. \]
Hence, it follows from Corollary \ref{corr-class} that $\bar{\rho}_{E_1 \times E_2,13}$ is nonsurjective. 

It remains to check whether $\bar{\rho}_{E_1 \times E_2, 3}$ is surjective. Indeed, using Sutherland's \texttt{EpSigs} script  \cite{MR3482279}, we find  that
\[ \dim_1 \bar{\rho}_{E_1,3}(\Frob_{73}) = 1 \quad \text{and} \quad \dim_1 \bar{\rho}_{E_2,3}(\Frob_{73}) = 2. \]
Thus, Lemma \ref{L:smallprimes} gives that $\bar{\rho}_{E_1 \times E_2, 3}$ is surjective. In summary, we show that
\[ \{ \ell  \text{ prime} : \bar{\rho}_{E_1 \times E_2, \ell} \text{ is nonsurjective} \} = \{2,13 \}. \]
\end{example}

\begin{example} Consider the abelian surfaces $A_1$ and $A_2$ that are the Jacobians of the genus $2$ curves $C_1$ and $C_2$ with LMFDB labels \texttt{1923.a.1923.1} and \texttt{976.a.999424.1}, given by
\begin{align*}
C_1 &: y^2+(x^3+x+1)y=-x^6+x^5-3x^4+2x^3-3x^2+x-1 \\
C_2 &: y^2 +(x+1)y=x^6 -2x^5 +2x^3 -x^2.
\end{align*}
From \cite[Table 4]{NonSurjAlg}, we note that
\begin{align*}
    \set{\ell \text{ prime} : \bar{\rho}_{A_1,\ell} \text{ is nonsurjective}} &= \set{5}, \\
    \set{\ell \text{ prime} : \bar{\rho}_{A_2,\ell} \text{ is nonsurjective}} &= \{2,29\}.
\end{align*}
The surfaces $A_1$ and $A_2$ have good reduction at $5$ and $17$. Using Sage, we compute that 
\[ a_{5}(A_1) = 2, \quad a_{5}(A_2) = -1, \quad a_{17}(A_1) = 3,\; \text{and} \quad a_{17}(A_2) = 1.\]
Thus
\[
\gcd((a_{5}(A_1)-a_{5}(A_2))(a_{5}(A_1)+a_{5}(A_2)),(a_{17}(A_1)-a_{17}(A_2))(a_{17}(A_1)+a_{17}(A_2)) = 1.
\]
It follows that for any prime $\ell$, either
\[
a_{5}(A_1) \not\equiv \pm a_{5}(A_2) \pmod{\ell} \quad \text{or} \quad a_{17}(A_1) \not\equiv \pm  a_{17}(A_2) \pmod{\ell}.
\]
By Corollary \ref{corr-class}, this implies that
\[
\set{\ell \text{ prime} : \bar{\rho}_{A_1 \times A_2, \ell} \text{ is nonsurjective}} \subseteq \set{2,3,5,29}.
\]
Using the Magma function \texttt{Cor38Holds} in this paper's GitHub repository, we verify that Corollary \ref{corr-class} also holds when $g = 2$ and $\ell = 3$. Hence $\bar{\rho}_{A_1 \times A_2, 3}$ is surjective. In summary, we have found that
\[
\set{\ell \text{ prime} : \bar{\rho}_{A_1 \times A_2, \ell} \text{ is nonsurjective}} = \set{2,5,29}.
\]
\end{example}

\begin{example}

Let $A$ be the Jacobian of the genus $2$ curve $C$ with LMFDB label \texttt{743.a.743.1}. Let $E$ be the elliptic curve with LMFDB label \texttt{2972.a1}. The curves $C$ and $E$ are given by
\begin{align*}
C&:y^2 + (x^3 + x + 1)y = -x^4 + x^2 \\
E&: y^2=x^3-17977x-927735.
\end{align*}
We have that $\bar{\rho}_{A,\ell}$ and $\bar{\rho}_{E,\ell}$ are surjective for all primes $\ell$. Thus, by Lemma \ref{group-lemma-2}, $\bar{\rho}_{A\times E, \ell}$ is surjective for all $\ell \geq 5$. Using the function Magma \texttt{Lemma39Holds} in this paper's GitHub repository, we verify that Lemma \ref{group-lemma-2} also holds when $g_1=2,g_2=1,$ and $\ell = 3$. Hence
\[
\set{\ell \text{ prime} : \bar{\rho}_{A \times E, \ell} \text{ is nonsurjective}} \subseteq \set{2}.
\]
We now show that $\bar{\rho}_{A \times E, 2}$ is nonsurjective. A simplified model for $C$ is
\[
y^2 = x^6 - 2x^4 + 2x^3 + 5x^2 + 2x + 1.
\]
Thus $\Q((A \times E)[2])$ is the splitting field of the polynomial $(x^6 - 2x^4 + 2x^3 + 5x^2 + 2x + 1) \cdot (x^3-17977x-927735).$
Calculating the order of the Galois group of this field in Magma, we find that
\[
|G_{A \times E}(2)| = 
|\Gal( \Q((A \times E)[2])  / \Q)|
= 2160
= \tfrac{1}{2} |\Delta_{2,1}(\F_2)|.
\]
The factor of $\frac{1}{2}$ is explained by the following ``entanglement'' between $\Q(A[2])$ and $\Q(E[2])$. The minimal discriminants of $C$ and $E$ are $\Delta_C = -743$ and $\Delta_E = - 2^{4} \cdot 743 $, respectively, and hence
\[ \sqrt{-743} \in \Q(A[2]) \cap \Q(E[2]). \]
In summary, we have shown that
\[
\set{\ell \text{ prime} : \bar{\rho}_{A \times E, \ell} \text{ is nonsurjective}} = \set{2}.
\]
\end{example}

\bibliographystyle{amsplain}
\bibliography{References}

\end{document}